\documentclass[11pt,a4paper,oneside,DIV=12,headings=small,abstract=on]{scrartcl}
\usepackage[noblocks]{authblk}

\usepackage{amssymb,amsmath,amsfonts,amsthm,mathtools}
\usepackage{enumerate,expdlist,tikz,float}
\usetikzlibrary{patterns}
\usepackage[]{xcolor}
\usepackage[english]{babel}
\usepackage[T1]{fontenc}
\usepackage[utf8]{inputenc}
    \colorlet{annuli}{black!90}
%
\DeclareMathOperator{\Dom}{{\mathcal D}}
\DeclarePairedDelimiter{\abs}{|}{|}
\DeclarePairedDelimiter{\norm}{\lVert}{\rVert}
\newcommand{\euler}{\mathrm{e}}

\newcommand{\sfuc}{\mathrm{uc}}
\newcommand{\drm}{\mathrm{d}}
\newcommand{\RR}{\mathbb{R}}
\newcommand{\CC}{\mathbb{C}}
\newcommand{\NN}{\mathbb{N}}
\newcommand{\ZZ}{\mathbb{Z}}

\newcommand{\diam}{\operatorname{diam}}
\newcommand{\dist}{\operatorname{dist}}
\newcommand{\ran}{\operatorname{Ran}}
\newcommand{\ess}{\mathrm{ess}}
\newcommand{\equiset}{S_{\delta , Z}}
\newcommand{\equiop}{W_{\delta , Z}}
\newcommand{\Q}{\mathcal{Q}_{\Gamma,Z}}
\newcommand{\cD}{\mathcal{D}}
\newcommand{\cH}{\mathcal{H}}
\newcommand{\cK}{\mathcal{K}}
\newcommand{\cM}{\mathcal{M}}
\newcommand{\cL}{\mathcal{L}}
\newcommand{\fM}{\mathfrak{M}}
\newcommand{\E}{\mathsf{E}} 
\usepackage{mathtools}
\DeclareMathOperator{\funs}{s}

\newtheorem{theorem}{Theorem}[section]
\newtheorem{lemma}[theorem]{Lemma}
\newtheorem{proposition}[theorem]{Proposition}
\newtheorem{corollary}[theorem]{Corollary}
\newtheorem{hypothesis}[theorem]{Hypothesis}
\theoremstyle{definition}

\theoremstyle{remark}
\newtheorem{remark}[theorem]{Remark}
\newtheorem{example}[theorem]{Example}
\usepackage[linktocpage, pdftex]{hyperref}
  \definecolor{darkred}{rgb}{0.5,0,0}
  \definecolor{darkgreen}{rgb}{0,0.5,0}
  \definecolor{darkblue}{rgb}{0,0,0.5}
  \hypersetup{colorlinks,linkcolor=darkblue,filecolor=darkgreen,urlcolor=darkred,citecolor=darkblue}
  \hypersetup{
  pdftitle={Unique continuation and lifting of spectral band edges of Schr\"odinger operators on unbounded domains},
  pdfauthor={I. Nakic, M. T\"aufer, M. Tautenhahn, I. Veseli\'c},
  }
\makeatletter
\def\blfootnote{\xdef\@thefnmark{}\@footnotetext}
\makeatother
\begin{document}
%
%
%
%
%
%
\title
{Unique continuation and lifting of spectral band edges of Schr\"odinger operators on unbounded domains}

\author[1]{Ivica Naki\'c}
\affil[ ]{}
\author[2]{Matthias T\"aufer}
\author[3]{Martin Tautenhahn}
\author[4]{Ivan Veseli\'c}
\publishers{\vspace{0ex} \normalsize (With an Appendix by Albrecht Seelmann$^4$)}

\date{\vspace{-5ex}}
%
\maketitle
\begin{abstract}
We prove and apply two theorems: First, a quantitative, scale-free unique continuation
estimate for functions in a spectral subspace of a Schr\"odinger operator on a
bounded or unbounded domain; second, a perturbation and lifting estimate for edges of the essential spectrum  of a self-adjoint operator under a semi-definite perturbation. These two results are combined to obtain lower and upper Lipschitz bounds on the function parametrizing locally
a chosen edge of the essential spectrum of a Schr\"odinger operator
in dependence of a coupling constant.
Analogous estimates for eigenvalues, possibly in gaps of the essential spectrum,
are exhibited as well.
\end{abstract}
\footnotetext[1]{University of Zagreb, Faculty of Science, Department of Mathematics, Croatia}
\footnotetext[2]{Queen Mary University of London, School of Mathematical Sciences, United Kingdom}
\footnotetext[3]{Technische Universit\"at Chemnitz, Fakult\"at f\"ur Mathematik, Germany}
\footnotetext[4]{Technische Universit\"at Dortmund, Fakult\"at f\"ur Mathematik, Germany}
\blfootnote{Keywords: Unique continuation, uncertainty principle, semi-definite perturbation, spectral edges, lifting estimates, spectral engineering; MSC classes: 35Pxx, 35J10, 35B05, 35B60, 81Q10}
\tableofcontents

%
%
%
%
\section{Introduction}
There are two types of results in the paper, one from the realm of the analysis of partial differential equations and the other from operator theory.
On the one hand we prove a quantitative and scale-free unique continuation principle for Schr\"o\-dinger operators in unbounded domains.
On the other hand we establish lower bounds on the sensitivity of spectra of self-adjoint operators under perturbations.
Finally, we combine these results and obtain lower bounds on the lifting of eigenvalues and edges of essential spectrum of Schr\"odinger operators under certain non-negative perturbations.
\par
The classical notion of unique continuation for elliptic differential expressions $L$ refers to the fact that if a solution $u$ of $Lu = 0$ in a connected, open $\Gamma \subset \RR^d$ vanishes on a non-empty open subset, then $u$ is identically zero. In this paper, we study a quantitative and multi-scale variant of this property. Let
\begin{equation}\label{eq:Gamma}
\Gamma = \bigtimes_{k =1}^d (\alpha_i , \beta_i) , \quad \alpha_i ,\beta_i \in \RR\cup \{\pm \infty\}
\end{equation}
be a generalized rectangle, and $S_\delta \subset \Gamma$ be a certain equidistributed arrangement of balls of radius $\delta$. We refer to Fig.~\ref{fig:equisequence} for an illustration of such a set, and to Sect.~\ref{ssec:model} for a precise definition.
\begin{figure}[ht] \centering
\begin{tikzpicture}
  \pgfmathsetseed{{\number\pdfrandomseed}}
  \pgfmathsetmacro{\d}{0.25};
  \pgfmathsetmacro{\e}{0.5-\d};
  \foreach \x in {0.5,1.5,...,4.5}{
    \foreach \y in {0.5,1.5,...,4.5}{
    \filldraw[annuli, fill opacity = 0.3] (\x+rand*\e,\y+rand*\e) circle (\d);
    }
  }
  \foreach \y in {0,1,2,3,4,5}{
    \draw[dotted,thick] (\y,-0.3) --(\y,5.3);
    \draw[dotted,thick] (-0.3,\y) --(5.3,\y);
  }
\end{tikzpicture}
\caption{Illustration of a possible arrangement of the set $S_\delta \subset \Gamma$.}
\label{fig:equisequence}
\end{figure}
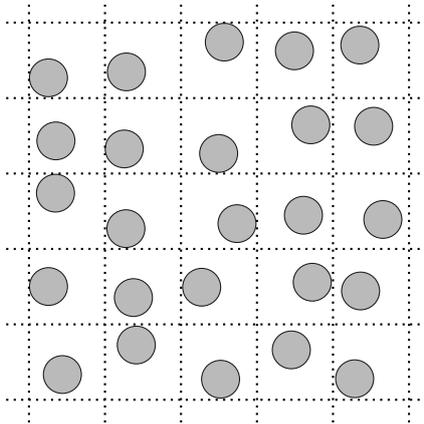
Furthermore, let $V \in L^\infty (\Gamma)$ be real-valued, and define the self-adjoint operator $H = -\Delta + V$ in $L^2 (\Gamma)$.
Our first main result, spelled out in Section~\ref{sec:UCP}, is an estimate of the form
\begin{equation}\label{eq:UCP-introduction}
 C \lVert u \rVert_{L^2 (\Gamma)}^2 \leq \lVert u \rVert^2_{L^2 (S_\delta)}
 \quad\text{with}\quad
 C = \delta^{N (1 + \lVert V \rVert_\infty^{2/3} + \sqrt{E})}.
\end{equation}
It holds for all $E \geq 0$ and all $u$ in the spectral subspace of $H$ up to energy $E$ with a uniform, merely dimension-dependent constant $N > 0$.
It is important to note that the constant $C$ does neither depend on the set $\Gamma$ nor on the choice of the equidistributed set $S_\delta$.
Furthermore, it depends on $V$ only via its $L^\infty$-norm.
\par
Results of the type \eqref{eq:UCP-introduction} have been obtained before in the case where $\Gamma$ is a cube $\Lambda_L$ of side length $L$.
In this case the operator $H$ is lower semi-bounded with purely discrete spectrum and all functions in its spectral subspace up to energy $E$ turn out to be finite linear combinations of eigenfunctions.
This is the typical situation encountered when studying random Schr\"odinger operators, Anderson localization, and Wegner estimates. However, if $\Gamma \subset \RR^d$ is an unbounded set, which is allowed in our framework,
the bulk of the spectrum of $H$ will in general consist of essential spectrum,
and eigenfunctions, if any exist, might span only a subspace. Consequently, by allowing a more general geometry (unbounded sets $\Gamma$) we are also allowing more general spectral situations
(spectral subspaces without an orthonormal basis consisting of eigenvectors).
In order to prove Ineq.~\eqref{eq:UCP-introduction} in the desired generality, we replace the expansion with respect to eigenfunctions by an appropriate formulation in terms of spectral calculus.

While it is not possible to give an exhaustive summary on  the history of unique continuation principles for Schr\"odinger operators here,
there is a number of papers which motivate directly and are the starting point of the present study. We describe them now briefly.
If $\Gamma = \Lambda_L$ with $L \in \NN$, and $u$ is in the domain of the Dirichlet Laplacian on $\Lambda_L$
satisfying $\lvert \Delta u \rvert \leq \lvert V u \rvert$ almost everywhere on $\Lambda_L$,
special cases of  Ineq.~\eqref{eq:UCP-introduction} have been obtained in \cite{Rojas-MolinaV-13} using local quantitative unique continuation estimates very similar to those developed in \cite{GerminetK-13b}  and \cite{BourgainK-13}.
Here, the condition $\lvert \Delta u \rvert \leq \lvert V u \rvert$ is in particular satisfied if $u$ is a single eigenfunction of a Schr\"odinger operator in $L^2 (\Lambda_L)$.
The case where $\Gamma = \Lambda_L$ and $u$ is in the spectral subspace of an interval $I \subset \RR$, has first been studied in \cite{Klein-13}
under the assumption that $I = [E-\epsilon , E + \epsilon]$ for fixed, but sufficiently small $\epsilon > 0$. The general case where $I = (-\infty , E]$ has been announced in \cite{NakicTTV-15}, while full proofs were given in \cite{NakicTTV-18}.
An extension of this result to infinite linear combinations of eigenfunctions with exponentially decaying coefficients has been given in \cite{TaeuferT-17}.
Special cases of an estimate of the type \eqref{eq:UCP-introduction} on unbounded domains have been presented first in \cite{TautenhahnV-16}.
\par
In Section \ref{sec:perturb} we study lower bounds on the displacement of eigenvalues and spectral edges
of general self-adjoint operators under the influence of non-negative perturbations
which are positive on an appropriate spectral subspace.
On one hand, these  abstract results are of interest in themselves,
on the other the required positivity on spectral subspaces is, by Ineq.~\eqref{eq:UCP-introduction},
in particular satisfied for Schr\"odinger operators and non-negative perturbations potentials which are positive on a superset of $S_\delta$ as in Fig.~\ref{fig:equisequence}.
\par
To formulate this more precisely, let $E\in \RR$, $A$ a self-adjoint operator and $B$ a self-adjoint, non-negative, bounded perturbation, satisfying
\begin{equation}
 \label{eq:2}
 B \geq \kappa >0
 \quad
 \text{on}
 \quad
 \ran P_{A + B}(E),
\end{equation}
where $\{ P_{A+B}(E) \colon E \in \RR \}$ is the resolution of identity for the operator $A+B$.
If $A$ is lower semi-bounded with purely discrete spectrum then
a standard perturbation argument shows that the minimax principle for eigenvalues yields
\[
 \lambda_k(A + B) \geq \lambda_k(A) + \kappa
\]
as long as $\lambda_k(A + B) \leq E$, where $\lambda_k$ denotes the $k$-th eigenvalue of an operator, counted from below including multiplicities.
For convenience of the reader and in order to illustrate the underlying mechanism, we give the precise statement and proof in Lemma~\ref{lem:bottom} below.
Now, if the operator $A$ exhibits essential spectrum as well, the corresponding situation is more challenging.
By an adaptation of the argument for eigenvalues, it is still rather straightforward to deduce an analogous lower bound on the sensitivity of the bottom of the essential spectrum with respect to perturbations, see Lemma~\ref{lem:bottom} below.
\par
The situation concerning the movement of other edges of the essential spectrum and eigenvalues inside gaps of the essential spectrum is far more intricate.
In each gap of the essential spectrum, the operator $A$ may have a finite or countably infinite set of isolated eigenvalues of finite multiplicity with possible accumulation points at the upper and lower edge of the gap.
Therefore, the notion of the $k$-th eigenvalue inside a gap might become meaningless.
Hence, for a self-adjoint operator $A$ we pick a reference point $\gamma \in \RR \cap \rho(A)$ and denote by $\lambda^\leftarrow_{k,\gamma}$ (respectively, $\lambda^\rightarrow_{k,\gamma}$)  the $k$-th eigenvalue to the left (respectively to the right) of $\gamma$, counting multiplicities.
Under a similar assumption as Ineq.~\eqref{eq:2}, we deduce in Theorem~\ref{thm:gaps_right} for perturbations $B$ with norm not exceeding a critical value that
\begin{equation}
 \label{eq:eigenvalue_moving_intro}
 \lambda_{k, \gamma}^{\rightarrow}(A + B)
 \geq
 \lambda_{k, \gamma}^{\rightarrow}(A) + \kappa .
\end{equation}
Indeed, the mentioned assumption on $\|B\|$ is necessary in order to ensure that the spectral components above and below $\gamma$ do not mix, and is optimal in that regard.
We also show an analogous estimate for edges of the essential spectrum, see Theorem~\ref{thm:gaps_left} below. The proofs rely on variational principles which are valid  inside gaps of the essential spectrum, and which have been developed in other contexts,
see, e.g., \cite{GriesemerLS-99}, \cite{MorozovM-15}, \cite{LangerS-16}, and the references given there. In fact our proofs employ two different variational principles, one for spectrum below $\gamma$ and one for spectrum above $\gamma$.
The first of them has recently been proved in \cite{LangerS-16}.
The second one is a combination of the variational principle
in \cite{GriesemerLS-99}
and the subspace perturbation theory of \cite{Seelmann-17-arxiv}.
This variant of the variational principle is formulated and proved in the appendix by A.~Seelmann of this paper.
We need this variant, because the direct application of \cite{GriesemerLS-99} or \cite{MorozovM-15}, which are formulated for quadratic forms,
does not seem to yield the  optimal critical value for the norm of the perturbation $\lVert B \rVert$,
which we were aiming for.
Apart from this, we feel that the variational principle presented in the appendix relies on criteria which are particularly natural and easy to check.
In fact, in our application the criteria are ensured by the
Davis-Kahan $\sin 2 \Theta$ theorem on the perturbation of spectral projections.
\par
Moreover, one might wonder whether one can avoid to invoke two different variational principles when considering eigenvalues below and above $\gamma$, respectively.
One would naturally try to transform one variational principle into the other by using the
sign-flip  $A-\gamma \mapsto -A+\gamma$.
However, this does not work directly, because assumption \eqref{eq:2} is not symmetric under the sign flip.
In particular, while for lower semi-bounded operators $A$  the operator $(A+B)P_{A+B}(E)$ is bounded, $(A+B)(1-P_{A+B}(E))$ is not, for arbitrary $E\in \RR$.
\par
We also prove monotonicity of eigenvalues
$\lambda_{k, \gamma}(A + C) \leq \lambda_{k, \gamma}(A + B)$
(inside gaps of the essential spectrum)
with respect to two perturbation operators $0\leq C\leq B$. Here $\lambda_{k, \gamma}$ may denote either $\lambda^{\rightarrow}_{k, \gamma}$ or $\lambda^{\leftarrow}_{k, \gamma}$.
\par
Perturbation of eigenvalues inside gaps has been studied in a number of papers, but
the questions raised there are different to those which we study.
Mostly the number of eigenvalues inside an essential spectral gap is studied, see
\cite{HundertmarkS-08} for an abstract result, oftentimes in the asymptotic regime of
a coupling constant, see \cite{Birman-91}.
For operators of Schr\"odinger type or of divergence form there are numerous more precise results
of this genre, in particular those originating from Birman's school, see for instance
\cite{BirmanL-94,BirmanW-94,Birman-95,Birman-96,Birman-97,BirmanP-97,BirmanS-10,Suslina-03}.

\par

We now turn to sensitivity of the edges of the essential spectrum. Lemma~\ref{lem:lipschitz_left_edge} formulates a folklore wisdom that the edges of the essential spectrum define locally Lipschitz continuous functions with respect to the perturbation.
See \cite{Veselic-K-08} and \cite{Seelmann-14-diss} for some background on this topic.
One of the main results of the present paper is to complement this estimate by giving lower bounds on the lifting of the spectral edges.
In particular, under a slightly stronger assumption than \eqref{eq:2} we show in
Theorem~\ref{thm:essential_left} and Theorem~\ref{thm:essential_right} that the edge of the essential spectrum moves at least by $\kappa$ for sufficiently small perturbations.
Basically, the position $f(t)$ of the specified edge of $\sigma_{\mathrm{ess}}(A+tB)$ satisfies
\begin{equation}\label{eq:f-bound}
f(0) + t \kappa \leq f(t) \leq f(0) + t \Vert B\Vert
\end{equation}
as long as $t \in [0,t_0)$ with $\lVert B \rVert t_0 $ equal to the length of the gap in the essential spectrum above $f(0)$.
Again, our smallness condition on the perturbation $B$, or the bound on $t_0$, respectively, is optimal.
The proof relies on a successive application of the perturbation result \eqref{eq:eigenvalue_moving_intro}, an iterative renumbering, and a limiting procedure.
\par
In Section~\ref{sec:application}, we combine the results of the previous two sections.
For $A$ we choose a Schr\"odinger operator in $L^2 (\Gamma)$ with a bounded potential $V$, and
for $B$ a multiplication operator satisfying $B \geq \vartheta \mathbf{1}_{S_{\delta}}$ for some $\vartheta > 0$.
Then the quantitative unique continuation principle from \eqref{eq:UCP-introduction} ensures that our perturbation results can be applied. Hence, for such non-negative perturbations of Schr\"odinger operators,
we obtain quantitative estimates on the lifting of  eigenvalues and (essential) spectral edges.
Let us stress that the number $\kappa$ is explicitly given in terms of $\vartheta$, $\delta$, $\lVert V \rVert_\infty$, $\lVert B \rVert$, and the upper energy cutoff $E$.
Furthermore, the estimates are scale-free and uniform over geometric configurations in the sense that they neither depend on the set $\Gamma$ nor on the particular realization of the equidistributed arrangement of balls $S_\delta$ on which the perturbation is supported.
Let us give a specific example where our theorem applies.
\begin{example}
  Let $\cL_1$ and $\cL_2\subset \RR^d$ be two incommensurate $d$-dimensional lattices,
  $V_j \in L^\infty(\RR^d)$ be real-valued
  $\cL_j$-periodic, for $j=1$ respectively  $j=2$, and $H=-\Delta +V_1$.
  Then $H$ is self-adjoint with purely absolutely continuous band spectrum, and the norm $\lVert V_j\rVert$ of the multiplication operator
  equals the sup-norm $\lVert V_j\rVert_\infty$ of the function.
  Assume that $(0,b)$ is a spectral gap of $H$, i.e.\ $0,b\in \sigma(H)=\sigma_\ess(H)=\sigma_{\mathrm{ac}}(H)$ and $\sigma(H)\cap (a,b)=\emptyset$.
  For $t\in \RR$ set
  $f(t) = \sup \left( \sigma_{\mathrm{ess}}(H + t V_2) \cap (-\infty, b-\vert t\vert \lVert V_2 \rVert) \right) $. Then $f(0)=0$ and for sufficiently small values of $\vert t\vert$
  the corresponding edge of $\sigma_\ess(H+tV_2)$ is given by $f(t)$ and satisfies
  $-\vert t\vert \lVert V_2 \rVert \leq f(t)\leq \vert t\vert \lVert V_2 \rVert$. More precisely, this holds true for all $t \in (- b / (2\lVert V_2 \rVert) ,b / (2\lVert V_2 \rVert))$.

  In the case that there is an open $O\subset \RR^d$ such that $V_2\geq \mathbf{1}_{O}$
  one can simplify the definition of $f$ to
  \[
  f\colon \big[0, b/\lVert V_2 \rVert\big) \to \RR, \quad
  f(t) = \sup \left( \sigma_{\mathrm{ess}}(H + t V_2) \cap (-\infty, b) \right).
  \]
  Then we have for all $t \in [0, b / \lVert V_2 \rVert )$ and
  $\epsilon \in [0, b / \lVert V_2 \rVert - t)$ the two-sided Lipschitz bound
  \[
  \epsilon \kappa \leq f(t+\epsilon) -f(t)\leq \epsilon \lVert V_2 \rVert
  \]
  where $\kappa$ is as above and thus depends only on $d$, $O$,
  $\lVert V_1 \rVert$, $\lVert V_2 \rVert$, and  $b$.

  If the two lattices $\cL_1$ and $\cL_2$, happen to be commensurate, i.e.
  $\cL_1\cap \cL_2$ is a $d$-dimensional lattice as well, then the result of course holds as well.
  However, in this simpler situation one can resort to a Floquet-decomposition to reduce the problem to
  operators with purely discrete spectrum. In the general situation we consider here, this is not the case.
\end{example}
Using the estimates in Section  \ref{sec:perturb}
and Section \ref{sec:application}
one can perform spectral engineering using (possibly non-periodic) perturbation potentials in the following sense: 
If there is an energy $E$ near  the boundary of the essential spectrum, which is in the resolvent set, 
one can identify appropriate perturbation potentials such that the essential spectrum of the perturbed Schr\"odinger covers the prescribed energy. 
Conversely, if the energy $E$ is in the spectrum, appropriately chosen additive perturbations yield an operator where $E$ is in the resolvent set.
\par 

The two-sided Lipschitz estimate on the movement of spectral band edges of Schr\"odinger operators
provides a key tool
to prove Anderson localization near edges of the essential spectrum of some
Schr\"odinger operator $H$ on $L^2(\RR^d)$ under the influence of a random potential
$W=W_\omega$. Indeed, in the case of a $H$ with periodic potential
and a perturbation $W_\omega$ of alloy-type this has been performed
in \cite{KirschSS-98a}.
The starting point of their analysis is Ineq.~(2.2) in  \cite[Theorem~2.2]{KirschSS-98a}
which is a special case of bound \eqref{eq:f-bound} above.
However, \cite[Theorem~2.2]{KirschSS-98a} considers only a periodic situation.
Due to this assumption one can use Floquet-Bloch theory to reduce the spectral perturbation problem
to operators with purely discrete spectrum and then apply classical perturbation theory for isolated eigenvalues.
In the general situation which we are treating this is no longer possible
and we have to study the essential spectrum directly.
We defer the precise formulation and proof of the indicated claim on Anderson localization near spectral band edges to a sequel paper.

There is another important application of the unique continuation results of Section \ref{sec:UCP},
namely control theory of the heat equation on unbounded domains.
While control theory on bounded domains has been studied for decades, unbounded domains became a focus of interest rather recently,
see for instance \cite{Miller-05a,Miller-05b}, \cite{GonzalezT-07}, \cite{Barbu-14}, \cite{Zhang-16},
\cite{LeRousseauM-16}, \cite{WangWZZ}, \cite{EgidiV-17-arxiv}, \cite{BeauchardPS-18}
and the references therein.
In a sequel project \cite{NakicTTV-20} we have established explicit estimates on the cost of null-controllability of the heat equation for
Schr\"odinger semigroups on unbounded domains $\Gamma$ of the type \eqref{eq:Gamma}.
They recover, improve, and unify several results obtained earlier.

%
%
%
%
\section{Quantitative unique continuation principle} \label{sec:UCP}
\subsection{Notation and main result}
\label{ssec:model}
Whenever we consider in this paper Schr\"odinger operators we consider the following geometric situation.
Let $d \in \NN$. For $G > 0$ we say that a set $\Gamma \subset \RR^d$ is $G$-\emph{admissible}, if there exist $\alpha_i ,\beta_i \in \RR\cup \{\pm \infty\}$ with $\beta_i - \alpha_i \geq G$ for $i \in \{1,\ldots,d\}$, such that
\[
 \Gamma = \bigtimes_{i =1}^d (\alpha_i , \beta_i) .
\]
This implies that there is a $\xi \in \RR^d$ such that $(-G/2 , G/2)^d + \xi$ is contained in $\Gamma$. For $\delta > 0$ and such $\xi \in \RR^d$ we say that a sequence $Z = (z_j)_{j \in (G\ZZ)^d + \xi} \subset \RR^d$ is \emph{$(G,\delta , \xi)$-equidistributed}, if
 \[
  \forall j \in (G\ZZ)^d + \xi \colon \quad  B(z_j , \delta) \subset (-G/2 , G/2)^d + j.
\]
Corresponding to a $G$-admissible $\Gamma \subset \RR^d$ and a $(G,\delta , \xi)$-equidistributed sequence $Z$, we define
\[
\equiset = \bigcup_{j \in (G\ZZ)^d + \xi} B(z_j , \delta) \cap \Gamma.
\]
Note that by a global shift of the coordinate system, we may and will henceforth assume without loss of generality
that $\xi=0\in \RR^d$. We will call a  $(G,\delta , 0)$-equidistributed sequence simply
$(G,\delta)$-equidistributed.
\par
For a $G$-admissible set $\Gamma$ and real-valued $V \in L^\infty(\Gamma)$, we define the self-adjoint operator $H$ on $L^2 (\Gamma)$ as
\begin{equation*}
 H = -\Delta + V
\end{equation*}
with Dirichlet or Neumann boundary conditions.
\smallskip

For a set $S \subset \RR^d$, we denote by $\mathbf{1}_S$ its indicator function.
For a self-adjoint operator $A$ we denote by $P_A(E):=\mathbf{1}_{(-\infty, E]}(A)$ the spectral projector
of $A$ associated to the interval $(-\infty, E]$ and by $P_A$ the spectral measure of $A$.
We use the notation $P_A^\perp (E) = \mathbf{1}_{(E , \infty)}(A) $ for the projection onto the orthogonal complement of $\ran P_A (E)$.
For details concerning spectral calculus we refer to \cite{ReedS-78} or \cite{Schmuedgen-12}.
The main result of this section is the following theorem.
\begin{theorem} \label{thm:result1}
There is $N>0$ depending only on the dimension, such that for all
$1$-admissible $\Gamma \subset \RR^d$, all $\delta \in (0,1/2)$, all $(1,\delta)$-equidistributed sequences $Z$, all real-valued $V \in L^\infty (\Gamma)$, all $E \in \RR$, and all $\psi \in \ran P_H (E)$ we have
\begin{equation*}
\label{eq:result1}
\lVert \psi \rVert_{L^2 (\equiset)}^2
\geq C_\sfuc \lVert \psi \rVert_{L^2 (\Gamma)}^2 ,
\quad\text{where}\quad
C_\sfuc =  \delta^{N \bigl(1 + \lVert V \rVert_\infty^{2/3} + \sqrt{\max \{ 0, E \}} \bigr)}.
\end{equation*}
\end{theorem}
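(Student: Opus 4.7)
The plan is to adapt the strategy of \cite{NakicTTV-18}, which handled the case of a cube $\Gamma=\Lambda_L$ on which $H$ has purely discrete spectrum, by replacing the expansion of $\psi\in\ran P_H(E)$ into eigenfunctions with an appropriate spectral-calculus construction. This change is essential on unbounded $\Gamma$, where the bulk of the spectrum of $H$ is essential and no orthonormal basis of eigenfunctions is available.

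First, I would carry out the \emph{ghost-dimension lift}. Since $H\geq -\lVert V\rVert_\infty$, the bounded Borel function $\lambda\mapsto\cosh(t\sqrt{(E-\lambda)_+})$ is well-defined on $\sigma(H)$, and I set
\[
F\colon\Gamma\times\RR\to\CC,\qquad F(x,t):=\bigl(\cosh(t\sqrt{(E-H)_+})\,\psi\bigr)(x).
\]
Spectral calculus gives $F(\cdot,0)=\psi$ and $\partial_t^2 F=(E-H)F$ in $L^2(\Gamma)$, so $F$ is a weak solution of the elliptic equation
\[
-\Delta_{x,t} F=(E-V)F\quad\text{on}\quad\Gamma\times\RR,
\]
inheriting on $\partial\Gamma\times\RR$ the Dirichlet or Neumann boundary condition of $H$, with coefficient uniformly bounded by $\max\{0,E\}+\lVert V\rVert_\infty$.

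Second, on each unit cube $\Lambda_j=(-1/2,1/2)^d+j$, $j\in\ZZ^d$, such that $\Lambda_j\subset\overline{\Gamma}$, I would apply a sharp Bourgain--Kenig-type Carleman estimate (cf.\ \cite{BourgainK-13, Rojas-MolinaV-13, NakicTTV-18}) on the $(d+1)$-dimensional slab $\Lambda_j\times(-h,h)$ with a fixed small $h>0$. For cubes meeting $\partial\Gamma$, I would first reflect $F$ evenly or oddly across the corresponding flat faces of $\partial\Gamma$ (for Neumann resp.\ Dirichlet), which is clean because $\Gamma$ is a product of intervals, turning each local bound into an honest interior estimate. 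The outcome is, uniformly in $j$,
\[
\lVert\psi\rVert_{L^2(B(z_j,\delta)\cap\Gamma)}^2
\geq \delta^{N(1+\lVert V\rVert_\infty^{2/3}+\sqrt{\max\{0,E\}})}\,\lVert F\rVert_{L^2(\Lambda_j\times(-h,h))}^2,
\]
with $N$ depending only on $d$. Summing over $j$, and using the disjointness of both $\{\Lambda_j\}$ and $\{B(z_j,\delta)\cap\Gamma\}$, replaces the left-hand side by $\lVert\psi\rVert_{L^2(\equiset)}^2$ and the right-hand side by a constant times $\lVert F\rVert_{L^2(\Gamma\times(-h,h))}^2$. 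The spectral identity
\[
\lVert F\rVert_{L^2(\Gamma\times(-h,h))}^2=\bigl\langle\psi,\bigl(h+\tfrac{\sinh(2h\sqrt{(E-H)_+})}{2\sqrt{(E-H)_+}}\bigr)\psi\bigr\rangle\geq 2h\,\lVert\psi\rVert_{L^2(\Gamma)}^2
\]
(the last inequality via $\sinh(x)/x\geq 1$) then delivers the stated conclusion after absorbing the constant $2h$ into the exponent by adjusting $N$.

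The main obstacle, absent in the bounded-domain case, is securing the uniformity of the Carleman constants across all cubes $\Lambda_j$ — including those adjacent to or escaping to infinity along $\partial\Gamma$ — together with the fact that $F$ is an actual weak solution of the elliptic equation with the correct boundary trace. The reflection construction, permissible precisely because $\Gamma$ is a generalized rectangle, resolves both. A further technical subtlety is the precise form of the exponent: obtaining the mixed $\lVert V\rVert_\infty^{2/3}+\sqrt{\max\{0,E\}}$ rather than the crude $(\lVert V\rVert_\infty+\max\{0,E\})^{2/3}$ requires the optimal Carleman weight choice, as carried out in \cite{NakicTTV-18}, and this is what forces one to use the sharp Bourgain--Kenig estimate rather than a softer parametrix argument.
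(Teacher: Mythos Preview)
Your ghost-dimension lift contains a sign error that breaks the argument. With $F=\cosh\bigl(t\sqrt{(E-H)_+}\bigr)\psi$ you correctly get $\partial_t^2F=(E-H)F$, but then
\[
-\Delta_{x,t}F=-\Delta_xF-\partial_t^2F=(H-V)F-(E-H)F=(2H-E-V)F,
\]
which is \emph{not} $(E-V)F$. To land on $-\Delta_{x,t}F=(E-V)F$ you would need $\partial_t^2F=(H-E)F$, i.e.\ $F=\cos\bigl(t\sqrt{(E-H)_+}\bigr)\psi$; but then your $\sinh$-based lower bound on $\lVert F\rVert_{L^2(\Gamma\times(-h,h))}^2$ is no longer valid, and, more fundamentally, the effective potential in the $(d{+}1)$-dimensional equation now has sup-norm $\lVert V-E\rVert_\infty\le \lVert V\rVert_\infty+\max\{0,E\}$. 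A Carleman estimate with that potential produces an exponent of the form $\bigl(\lVert V\rVert_\infty+E_+\bigr)^{2/3}$, not the claimed decoupled $\lVert V\rVert_\infty^{2/3}+\sqrt{E_+}$. This is not a matter of choosing a sharper weight: the decoupling in the paper's exponent comes from a structurally different lift.

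The paper takes $\Psi(t)=\int s_t(\lambda)\,\mathrm dP_H(\lambda)\,\psi$ with $s_t(\lambda)=\sinh(\sqrt\lambda t)/\sqrt\lambda$, $t$, or $\sin(\sqrt{-\lambda}t)/\sqrt{-\lambda}$ according to the sign of $\lambda$, so that $(-\Delta_{x,t}+V)\Psi=0$ with the \emph{original} potential $V$; hence the Carleman/interpolation constants carry only $\lVert V\rVert_\infty^{2/3}$. The $\sqrt{E_+}$ enters separately and additively through the exponential growth of $\Psi$ in $t$ (Proposition~\ref{prop:upper_lower}), which is exactly why the exponent decouples. Furthermore, the passage from $\psi$ on $B(z_j,\delta)$ to control of $\Psi$ on a unit-scale slab is not a single Carleman application: the paper needs \emph{two} interpolation inequalities (Propositions~\ref{prop:interpolation1} and~\ref{prop:interpolation2}), the second propagating smallness across annuli of diameter $\sim\sqrt d$ and requiring the reflected extension to the enlarged domain $\Gamma^+$. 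Your proposed one-shot local bound $\lVert\psi\rVert_{L^2(B_\delta(z_j))}^2\gtrsim\lVert F\rVert_{L^2(\Lambda_j\times(-h,h))}^2$ is not a standard Carleman output (it asks a hyperplane trace to dominate a volumetric norm at unit scale) and would itself require essentially this two-step machinery to justify.
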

By scaling and optimizing over a spectral shift parameter we obtain
\begin{corollary} \label{cor:scaling}
There is $N>0$ depending only on the dimension, such that for all
$G>0$, all $G$-admissible $\Gamma \subset\RR^d$, all $\delta \in (0,G/2)$, all $(G,\delta)$-equidistributed sequences $Z$, all real-valued $V \in L^\infty (\Gamma)$, all $E \in \RR$, and all $\psi \in \ran P_H (E)$ we have
\begin{equation*}
\lVert \psi \rVert_{L^2 (\equiset)}^2
\geq C_\sfuc^{(G)} \lVert \psi \rVert_{L^2 (\Gamma)}^2 ,
\quad\text{where}\quad
 C_\sfuc^{(G)} = \sup_{\lambda \in \RR}
 \left(\frac{\delta}{G}\right)^{N \bigl(1 + G^{4/3}\lVert V-\lambda \rVert_\infty^{2/3} + G\sqrt{(E-\lambda)_+} )}
\end{equation*}
and $t_+:=\max\{0,t\} $ for $t \in \RR$.
\end{corollary}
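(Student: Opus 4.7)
The plan is to reduce the scaled statement to Theorem~\ref{thm:result1} by combining two elementary manipulations: a rescaling of space that turns a $G$-admissible $\Gamma$ into a $1$-admissible one, and a spectral shift $H \mapsto H-\lambda$ that exchanges the potential with $V-\lambda$ and the energy with $E-\lambda$, after which we optimize over $\lambda$.

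First I would fix $\lambda\in\RR$ and observe that $\ran P_H(E) = \ran P_{H-\lambda}(E-\lambda)$, so $\psi$ lies in the spectral subspace of $\widetilde H:=-\Delta + (V-\lambda)$ up to energy $E-\lambda$. Next I would introduce the unitary rescaling $U\colon L^2(\Gamma)\to L^2(G^{-1}\Gamma)$ defined by $(U\psi)(y)=G^{d/2}\psi(Gy)$. A direct computation gives $U\widetilde H U^{-1} = G^{-2}\bigl(-\Delta + G^2 (V-\lambda)(G\,\cdot)\bigr)$, and $G^{-1}\Gamma$ is $1$-admissible. Consequently $U\psi$ lies in the spectral subspace of the operator $H_1 := -\Delta + V_1$ on $L^2(G^{-1}\Gamma)$ up to energy $E_1$, where
\[
V_1(y) := G^2 (V-\lambda)(Gy),\qquad E_1 := G^2(E-\lambda),
\]
so that $\lVert V_1\rVert_\infty = G^2\lVert V-\lambda\rVert_\infty$ and $\max\{0,E_1\}=G^2 (E-\lambda)_+$.

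The equidistributed structure rescales covariantly: if $Z=(z_j)_{j\in (G\ZZ)^d}$ is $(G,\delta)$-equidistributed, then $G^{-1}Z=(z_j/G)_{j\in \ZZ^d}$ is $(1,\delta/G)$-equidistributed in $G^{-1}\Gamma$, and the corresponding equidistributed set is $G^{-1}S_{\delta,Z}$. The hypothesis $\delta\in(0,G/2)$ guarantees $\delta/G\in(0,1/2)$, so Theorem~\ref{thm:result1} applies to $U\psi$ and gives
\[
\lVert U\psi\rVert_{L^2(G^{-1}S_{\delta,Z})}^2
\geq
(\delta/G)^{N\bigl(1+\lVert V_1\rVert_\infty^{2/3}+\sqrt{\max\{0,E_1\}}\,\bigr)}
\lVert U\psi\rVert_{L^2(G^{-1}\Gamma)}^2.
\]
A change of variables shows $\lVert U\psi\rVert_{L^2(G^{-1}S_{\delta,Z})}=\lVert\psi\rVert_{L^2(S_{\delta,Z})}$ and $\lVert U\psi\rVert_{L^2(G^{-1}\Gamma)}=\lVert\psi\rVert_{L^2(\Gamma)}$. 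Substituting the values of $\lVert V_1\rVert_\infty$ and $E_1$ produces the exponent $N\bigl(1+G^{4/3}\lVert V-\lambda\rVert_\infty^{2/3}+G\sqrt{(E-\lambda)_+}\bigr)$.

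Since this holds for every $\lambda\in\RR$ with the same $\psi$ on both sides, I would finally take the supremum over $\lambda$ on the right, which yields the claimed constant $C_\sfuc^{(G)}$ and completes the proof. There is no real obstacle here: the argument is purely bookkeeping, and the only points that require attention are the correct exponent of $G$ in $\lVert V_1\rVert_\infty^{2/3}$ and $\sqrt{E_1}$, and the verification that the rescaled sequence remains admissible in the sense of Section~\ref{sec:model}.
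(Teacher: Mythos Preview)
Your proof is correct and follows essentially the same approach as the paper: a spectral shift $H\mapsto H-\lambda$ followed by a dilation that converts the $G$-admissible setting into the $1$-admissible one of Theorem~\ref{thm:result1}, and then optimization over $\lambda$. The only cosmetic difference is that you use the unitary $U$ with the $G^{d/2}$ normalization (so norm preservation is automatic), whereas the paper uses the non-unitary $(Sf)(x)=f(Gx)$ and tracks the Jacobian factor $G^d$ by hand.
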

\begin{remark}
Choices of the parameter $\lambda$ which lead to a value for $ C_\sfuc^{(G)}$
which is not too far off from the optimal are for instance
$\lambda=\inf \sigma(H)$, $\lambda=\inf_x V(x)$ or  $\lambda=(\inf_x V(x)+\sup_x V(x))/2$.
\par
We emphasize that the constant $C_\sfuc^{(G)}$ neither depends on the set $\Gamma$ nor on the choice of the $(G,\delta)$-equidistributed sequence $Z$, and that it depends on the potential $V$ only via its $L^\infty$-norm.
\end{remark}
\begin{corollary}\label{cor:uncertainty}
Let $\equiop=\mathbf{1}_{\equiset}$ be the operator of multiplication by the characteristic function of the set $\equiset$.
Under the assumptions of Corollary~\ref{cor:scaling} we have
\begin{equation*}\label{eq:abstractUCP}
P_H (E) \equiop P_H (E)
\geq C_\sfuc^{(G)}
P_H (E)
\end{equation*}
in the sense of quadratic forms.
\end{corollary}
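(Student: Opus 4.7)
The plan is to observe that Corollary~\ref{cor:uncertainty} is essentially an operator-theoretic rephrasing of the pointwise estimate in Corollary~\ref{cor:scaling}. Recall that a quadratic form inequality $T_1 \geq T_2$ between bounded self-adjoint operators means $\langle T_1 \phi, \phi \rangle \geq \langle T_2 \phi, \phi \rangle$ for all $\phi$ in the common domain, so it suffices to verify this pointwise.

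First I would pick an arbitrary $\phi \in L^2(\Gamma)$ and set $\psi := P_H(E)\phi$. By construction $\psi \in \ran P_H(E)$, so Corollary~\ref{cor:scaling} applies and gives
\begin{equation*}
\lVert \psi \rVert_{L^2(\equiset)}^2 \geq C_\sfuc^{(G)} \lVert \psi \rVert_{L^2(\Gamma)}^2.
\end{equation*}

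Next I would rewrite both sides as quadratic forms in $\phi$. Since $\equiop = \mathbf{1}_{\equiset}$ is the associated multiplication operator, the left-hand side equals
\begin{equation*}
\langle \equiop \psi , \psi \rangle
= \langle \equiop P_H(E) \phi , P_H(E) \phi \rangle
= \langle P_H(E)\, \equiop\, P_H(E)\, \phi , \phi \rangle,
\end{equation*}
using self-adjointness of $P_H(E)$. For the right-hand side, the fact that $P_H(E)$ is an orthogonal projection (so $P_H(E)^2 = P_H(E)$ and $P_H(E)^*=P_H(E)$) yields
\begin{equation*}
\lVert \psi \rVert_{L^2(\Gamma)}^2 = \langle P_H(E) \phi , P_H(E) \phi \rangle = \langle P_H(E) \phi , \phi \rangle.
\end{equation*}
Combining these identities with the inequality above shows that for every $\phi \in L^2(\Gamma)$,
\begin{equation*}
\langle P_H(E)\, \equiop\, P_H(E)\, \phi , \phi \rangle \geq C_\sfuc^{(G)} \langle P_H(E) \phi , \phi \rangle,
\end{equation*}
which is exactly the claimed quadratic form inequality.

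There is no real obstacle here: the only subtlety is making sure that the test vector one plugs into Corollary~\ref{cor:scaling} actually lies in the spectral subspace, which is arranged by choosing $\psi = P_H(E)\phi$ from the outset. The argument is purely formal manipulation using self-adjointness and idempotency of the spectral projection.
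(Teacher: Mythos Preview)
Your argument is correct and is exactly the standard way to pass from the norm inequality of Corollary~\ref{cor:scaling} to the quadratic form inequality; the paper itself does not spell out a proof of Corollary~\ref{cor:uncertainty}, treating it as an immediate reformulation of Corollary~\ref{cor:scaling}, which is precisely what you have written out.
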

The proofs of Theorem \ref{thm:result1} and Corollary \ref{cor:scaling} are given in subsection \ref{ssec:proofs}.
%
%
%
%
%
%
%
%
\subsection{Extension to ghost dimension and spectral function}\label{ssec:ghost}
For the application of an appropriate unique continuation result
it will be necessary to extend the operator $H=-\Delta +V$ to a differential expression acting on a $(d+1)$-dimensional domain. We discuss now this extension in an abstract framework.

 Let $\cH$ be a Hilbert space and $A$ a self-adjoint operator on $\cH$ with domain $\cD (A)$.
Note that $A$ need not be semi-bounded.
We define the family of self-adjoint operators $(\mathcal{F}_t )_{t \in \RR}$ on $\cH$ as
\[
 \mathcal{F}_t
 =
 \int_{- \infty}^\infty s_t(\lambda) \mathrm{d} P_A(\lambda)
 \quad
 \text{where}
 \quad
 s_t(\lambda)=\begin{cases}
        \sinh(\sqrt{\lambda} t)/\sqrt{\lambda}, & \lambda > 0 ,\\
        t, & \lambda=0,\\
        \sin(\sqrt{- \lambda} t)/\sqrt{- \lambda}, & \lambda <0.
\end{cases}
\]
The operators $\mathcal{F}_t$ are self-adjoint operators with $\ran P_A ([a,b]) \subset \cD (\mathcal{F}_t)$ for $-\infty < a < b < \infty$, where $\cD (\mathcal{F}_t)$ denotes the domain of $\mathcal{F}_t$.
For $\psi \in \ran P_A ([a,b])$ we define the function $\Psi \colon \RR \to \cH$ as
\[
 \Psi(t)
 =
 \mathcal{F}_t \psi .
\]
For $T > 0$ we define the operator $\hat A$ on $L^2 ((-T,T) ; \cH)$ with $\cD (\hat A) = \{\Phi \colon t \mapsto A (\Phi (t)) - (\partial_t^2 \Phi) (t) \in L^2 ((-T,T) ; \cH)\}$
by
\[
 (\hat A \Phi) (t) =   A (\Phi (t)) - (\partial_t^2 \Phi) (t) .
\]
where $\partial_t^2 \Phi$ denotes the second $\cH$-derivative with respect to $t$.
\begin{lemma} \label{lemma:properties_Psi}
For all $a,b \in \RR$ with $a < b$ and all $\psi \in \ran P_A ([a,b])$ we have:
\begin{enumerate}[(i)]
 \item The map $\RR \ni t \mapsto \Psi (t) \in \cH$ is infinitely $\cH$-differentiable.
 In particular,
 \begin{equation}
 \label{eq:property_F_1}
  (\partial_t \Psi) (0)
  = \psi.
 \end{equation}
 \item
 For all $T > 0$ we have $\Psi \in \cD (\hat A)$ and
 \begin{equation}
 \label{eq:property_F_2}
 \hat A \Psi = 0 .
 \end{equation}
\end{enumerate}
\end{lemma}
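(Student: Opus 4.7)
The plan is to work at the level of the scalar \emph{generating function} $s_t(\lambda)$ and transport all statements to $\Psi$ via the spectral theorem. First one observes that the three cases in the definition of $s_t(\lambda)$ fit together to a single jointly analytic function on $\RR^2$, admitting the power series expansion $s_t(\lambda) = \sum_{n\geq 0} \lambda^n t^{2n+1}/(2n+1)!$ (obtained from $\sinh$ or $\sin$ by dividing out the factor $\sqrt{\lambda}$ or $\sqrt{-\lambda}$). Direct differentiation in $t$ then gives
\[
 \partial_t s_t(\lambda) = c_t(\lambda),
 \qquad
 \partial_t^2 s_t(\lambda) = \lambda\, s_t(\lambda),
\]
where $c_t(\lambda)$ is the corresponding cosine companion (hyperbolic for $\lambda>0$, trigonometric for $\lambda<0$, with $c_t(0)=1$), and in particular $c_0(\lambda)\equiv 1$.

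Since $\psi\in\ran P_A([a,b])$, the spectral measure $\drm\lVert P_A(\cdot)\psi\rVert^2$ is supported in the compact interval $[a,b]$, on which $s_t$ together with each of its $t$-derivatives is continuous and locally bounded in $t$, uniformly in $\lambda$. This allows differentiation under the spectral integral: from the identity
\[
 \left\lVert\frac{\Psi(t+h)-\Psi(t)}{h} - c_t(A)\psi\right\rVert_\cH^2 = \int_a^b \left|\frac{s_{t+h}(\lambda)-s_t(\lambda)}{h} - c_t(\lambda)\right|^2 \drm\lVert P_A(\lambda)\psi\rVert^2,
\]
Taylor expansion together with dominated convergence forces the right-hand side to $0$ as $h\to 0$, which gives $(\partial_t\Psi)(t) = c_t(A)\psi$. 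Iterating this argument, all higher $\cH$-derivatives exist and are obtained by spectral calculus from the $t$-derivatives of $s_t$. Specializing to $t=0$ and using $c_0\equiv 1$ yields $(\partial_t\Psi)(0)=\psi$, so part (i) is complete.

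For part (ii), the identity $\partial_t^2 s_t(\lambda) = \lambda s_t(\lambda)$ transported through the spectral calculus gives
\[
 (\partial_t^2\Psi)(t) = \int_a^b \lambda\, s_t(\lambda)\,\drm P_A(\lambda)\psi = A\,s_t(A)\psi = A\Psi(t),
\]
where $\Psi(t) = s_t(A)\psi$ lies in $\ran P_A([a,b]) \subset \cD(A)$ because the bounded Borel function $s_t$ commutes with $P_A([a,b])$. Hence the map $t\mapsto A\Psi(t) - (\partial_t^2\Psi)(t)$ vanishes identically and is in particular in $L^2((-T,T);\cH)$, while $\Psi$ itself is continuous and thus square integrable on bounded intervals, so $\Psi\in\cD(\hat A)$ and $\hat A\Psi=0$. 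The only delicate point is the justification of interchanging $\partial_t$ with the spectral integral, which however becomes routine once the compactness of the spectral support of $\psi$ is exploited.
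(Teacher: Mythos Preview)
Your proof is correct and follows essentially the same route as the paper: both arguments pass through the spectral representation of $\Psi$, justify differentiation under the spectral integral via dominated convergence on the compact interval $[a,b]$, and then reduce part~(ii) to the scalar ODE $\partial_t^2 s_t(\lambda)=\lambda s_t(\lambda)$. The only cosmetic differences are that you make the power-series structure of $s_t$ and the cosine companion $c_t$ explicit, and you verify $\Psi(t)\in\cD(A)$ by the commutativity of $s_t(A)$ with $P_A([a,b])$, whereas the paper invokes a closure argument together with a cited functional-calculus identity; neither variation changes the substance.
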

\begin{remark} \label{rem:ext}
We apply the lemma in the case $\cH = L^2 (\Gamma)$ and $A = H$, $a = -\lVert V \rVert_\infty$, and $b > a$. Then it follows that
$L^2 ((-T,T);\cH) = L^2 (\Gamma \times (-T,T))$, i.e.\ $\Psi$ may be understood as a mapping from $\Gamma \times (-T,T)$ to $\CC$. We have
\[
 \hat A = -\Delta + \hat V
 \quad \text{on} \quad L^2 (\Gamma \times (-T,T))
\]
with corresponding boundary conditions on $(\partial \Gamma) \times (-T,T)$, where $\hat V (x,t) = V (x)$.
Since $\Psi \in \cD (\hat A)$ by Lemma~\ref{lemma:properties_Psi}, we find that $\Psi \in H^1 (\Gamma \times (-T,T))$.
Now we extend $\Psi$ and $\hat V$ from $\Gamma \times (-T,T)$ to $\Gamma^+ \times (-T,T)$ where
\[
 \Gamma^+ := \bigtimes_{i=1}^d (\alpha_i^+ , \beta^+_i ) ,
\]
with
\[
\alpha_i^+ = \frac{M (\alpha_i - \beta_i) + \alpha_i + \beta_i}{2} \quad
\beta_i^+ = \frac{M (\beta_i - \alpha_i) + \alpha_i + \beta_i}{2}
\]
if both $\alpha_i$ and $\beta_i$ are finite, and $(\alpha_i^+ , \beta_i^+) = \RR$ otherwise.
Here $M$ is the least power of 3 which is larger than $9 \euler \sqrt{d}$.
The construction of the extension will depend on the boundary conditions, see\ \cite{NakicTTV-18}. In the case of
\begin{itemize}
 \item Dirichlet boundary conditions we extend $\hat V$ for each direction $i \in \{1,\ldots , d\}$ for which  one of $\alpha_i$ and $\beta_i$ is finite by symmetric reflections with respect to the boundary, and $\Psi$ by antisymmetric reflections. Then we iterate this procedure $\log_3 M$ times.
 \item Neumann boundary conditions we extend both $\hat V$ and $\Psi$ in the same way by symmetric reflections.
\end{itemize}
We will use the same symbol for the extended $\hat V$ and $\Psi$. Note that $\hat V$ takes values in $[-\lVert V \rVert_\infty, \lVert V \rVert_\infty]$. By Lemma~\ref{lemma:properties_Psi}
\[
 \hat A \Psi = (-\Delta + \hat V)\Psi = 0 \quad \text{on}\quad \Gamma \times (-T,T).
\]
By construction, we have
\begin{equation} \label{eq:eigenfunct_ext}
  (-\Delta + \hat V)\Psi = 0 \quad \text{on} \quad \Gamma^+ \times (-T,T),
\end{equation}
and (the extended) $\Psi$ satisfies the corresponding boundary conditions.
\end{remark}
\begin{proof}[Proof of Lemma~\ref{lemma:properties_Psi}]
In order to identify the $L^2(\cH)$-derivative
we calculate using dominated convergence theorem
\begin{multline*}
   \lim_{h \to 0}
   \biggl\lVert
   \int_a^b
   \left(
   \frac{s_{t+h}(\lambda) - s_t(\lambda)}{h} - \partial_t s_t(\lambda)
   \right) \mathrm{d} P_A(\lambda) \psi
   \biggr\rVert_{\cH}^2\\
   =
   \lim_{h \to 0}
   \int_a^b
   \biggl\lvert
   \frac{s_{t+h}(\lambda) - s_t(\lambda)}{h} - \partial_t s_t(\lambda)
   \biggr\rvert^2
   \mathrm{d} \langle \psi, P_A(\lambda) \psi \rangle
   =
   0.
  \end{multline*}
  The calculation for higher derivatives is analogous and we find for $k \in \NN_0$ that
  \[
  (\partial_t^k \Psi)(t) = \left( \int_{a}^b \partial_t^k s_t(\lambda) \mathrm{d} P_A(\lambda) \right) \psi \in \cH.
 \]
  \par
  Since $A$ is self-adjoint and $\mathcal{F}_t P_A ([a,b])$ is bounded, the operator $A \mathcal{F}_t P_A ([a,b])$ is closed.
  For part (ii) we infer from \cite[Theorem~5.9]{Schmuedgen-12} that
  \[
  A \mathcal{F}_t P_A ([a,b]) \psi
  =
  \overline{A \mathcal{F}_t P_A ([a,b])} \psi
  =
  \left(\int_a^b \lambda s_t(\lambda) \mathrm{d} P_A(\lambda) \right) \psi
  \in
  \cH ,
  \]
  which implies that $\psi \in \cD(A \mathcal{F}_t P_A ([a,b]))$.
  Hence $\mathcal{F}_t P_A ([a,b]) \psi = \Psi(t) \in \cD(A)$.
  We calculate using $\lambda s_t(\lambda) - \partial_t^2 s_t(\lambda) = 0$
  \[
   \int_{-T}^T \lVert A(\Psi(t)) - (\partial_t^2 \Psi)(t) \rVert_{\cH}^2 \mathrm{d}t
   =
   \int_{-T}^T
   \int_a^b
   \lvert \lambda s_t(\lambda) - \partial_t^2 s_t(\lambda) \rvert^2
    \mathrm{d} \lVert P_A(\lambda) \psi \rVert^2
   \mathrm{d}t
   =
   0
   \qedhere
  \]
\end{proof}
\subsection{Interpolation inequalities}
In this section we formulate two interpolation inequalities resulting from optimization of Carleman estimates.
They are generalizations of  Propositions~3.4 and 3.5 in \cite{NakicTTV-18}.
For the sake of the reader's convenience we use the notation from \cite{NakicTTV-18}
and define $X_T = \Gamma \times (-T,T)$ for $T > 0$ and $\RR^{d+1}_+ = \{ (x,t) \in \RR^{d} \times \RR \colon t \geq 0 \}$ for a $1$-admissible set $\Gamma \subset \RR^d$, and
\begin{align*}
 S_1 &= \biggl\{(x,t) \in \RR^{d} \times [0,1] \colon -t + \frac{t^2}{2} - \frac{\sum_{i=1}^d x_i^2}{4} > -\frac{\delta^2}{16}  \biggr\} \subset \RR^{d+1}_+  , \\
 S_3 &= \biggl\{(x,t) \in \RR^{d} \times [0,1] \colon -t + \frac{t^2}{2} - \frac{\sum_{i=1}^d x_i^2}{4} > -\frac{\delta^2}{4} \biggr\} \subset \RR^{d+1}_+ .
 \end{align*}
Note that in \cite{NakicTTV-18} an additional set $S_2$ is defined, which we do not need here.
Moreover, given a $1$-admissible set $\Gamma \subset \RR^d$ and a $(1,\delta)$-equidistributed sequence $Z$, we define
\[
 \Q
 =
 \left\{
  j \in \ZZ^d
  \colon
  (-1/2, 1/2)^d + j \subset \Gamma
 \right\}
\]
as well as
\[
U_{k,Z} = \bigcup_{j \in \Q} S_k (z_j) , \quad\text{for}\quad k \in \{1,3\}.
\]
Here $S_k (x) = S_k + x$ denotes the translate of the set $S_k$ by $x \in \RR^d$.
Recall that for $\psi \in \ran P_H(E)$, and $T > 0$, the function $\Psi \in H^1(\Gamma^+ \times (-T,T))$ is constructed as in Section~\ref{ssec:ghost}.
\begin{proposition} \label{prop:interpolation1}
For all $1$-admissible $\Gamma \subset \RR^d$, all $\delta \in (0,1/2)$, all $(1,\delta)$-equidistributed sequences $Z$, all real-valued $V \in L^\infty (\Gamma)$, all $E \in \RR$, and all $\psi \in \ran P_H (E)$ we have
\[
  \lVert  \Psi \rVert_{H^1 (U_{1,Z} )} \leq D_1 \lVert \psi \rVert_{L^2 (\equiset)}^{1/2} \lVert \Psi \rVert_{H^1 (U_{3,Z} )}^{1/2} ,
\]
where
\begin{equation*} 
   D_1 = \delta^{-N_1 (1 + \lVert V \rVert_\infty^{2/3})}
\end{equation*}
and $N_1 > 0$ is a constant depending only on the dimension.
\end{proposition}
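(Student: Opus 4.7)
The plan is to reduce the global statement to local Carleman-type interpolation inequalities at each point $z_j$, then combine them by Cauchy--Schwarz and by exploiting disjointness of the local patches. Fix $E \in \RR$, $\psi \in \ran P_H(E)$, and let $\Psi \in H^1(\Gamma^+ \times (-T,T))$ be the ghost-dimension extension constructed in Remark~\ref{rem:ext}; by Lemma~\ref{lemma:properties_Psi} and the reflection procedure, $\Psi$ solves $(-\Delta + \hat V)\Psi = 0$ on all of $\Gamma^+ \times (-T,T)$, with $(\partial_t \Psi)(\cdot,0) = \psi$ on $\Gamma$ and $\Psi(\cdot,0) = 0$. Because $\hat V$ takes values in $[-\lVert V\rVert_\infty, \lVert V\rVert_\infty]$, the potential entering the PDE has the same $L^\infty$-bound as $V$.

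The first step is the local ingredient: for every $j \in \Q$, the cube $(-1/2,1/2)^d + j$ lies in $\Gamma$, hence $B(z_j,\delta) \times (-T,T)$ sits inside $\Gamma^+ \times (-T,T)$ together with a full universal neighborhood. One can therefore apply a local Carleman estimate (as in \cite[Prop.~3.4]{NakicTTV-18}, whose proof uses only the PDE on a fixed neighborhood of $(z_j,0)$, the vanishing of $\Psi$ on $\{t=0\}$, and the identity $(\partial_t \Psi)|_{t=0} = \psi$) to obtain
\[
 \lVert \Psi \rVert_{H^1(S_1(z_j))}^{2}
 \;\leq\; D_1^{\,2}\, \lVert \psi \rVert_{L^2(B(z_j,\delta))}\, \lVert \Psi \rVert_{H^1(S_3(z_j))},
\]
with $D_1 = \delta^{-N_1(1+\lVert V\rVert_\infty^{2/3})}$ and $N_1$ depending only on $d$. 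Here the exponent $2/3$ in the potential dependence is the standard Carleman scaling (Donnelly--Fefferman), and is the one appearing in the bounded-domain analogues cited in Section~\ref{sec:UCP}.

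The second step is to sum over $j \in \Q$. Since $Z$ is $(1,\delta)$-equidistributed and $\delta < 1/2$, the balls $B(z_j,\delta)$ and the patches $S_k(z_j)$ (for $k=1,3$) are each contained in the unit cell $(-1/2,1/2)^d + j$ (resp.\ in that cell times $[0,1]$), so for distinct $j$ they are pairwise disjoint. Summing the local estimate and applying the Cauchy--Schwarz inequality yields
\[
 \sum_{j \in \Q} \lVert \Psi\rVert_{H^1(S_1(z_j))}^{2}
 \;\leq\; D_1^{\,2}\Bigl(\sum_{j \in \Q}\lVert \psi\rVert_{L^2(B(z_j,\delta))}^{2}\Bigr)^{1/2}
 \Bigl(\sum_{j \in \Q}\lVert \Psi\rVert_{H^1(S_3(z_j))}^{2}\Bigr)^{1/2}.
\]
By disjointness, the three sums equal $\lVert \Psi\rVert_{H^1(U_{1,Z})}^{2}$, $\lVert\psi\rVert_{L^2(\equiset)}^{2}$, and $\lVert \Psi\rVert_{H^1(U_{3,Z})}^{2}$, respectively. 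Taking square roots gives the claimed inequality.

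The main obstacle is verifying that the local Carleman estimate is genuinely applicable at each $z_j$ in the present unbounded setting. This amounts to checking that (i) the ghost-dimension extension of Remark~\ref{rem:ext} produces a function satisfying the PDE with the same potential bound on a region strictly larger than the support of the Carleman weights, and (ii) the reflections adapted to Dirichlet or Neumann boundary conditions preserve both this equation and the identity $(\partial_t\Psi)|_{t=0} = \psi$ on $\equiset$. Once (i) and (ii) are in place, the local interpolation is essentially the one from \cite{NakicTTV-18}, and the only new ingredient is the geometric disjointness argument above, which is robust to $\Gamma$ being unbounded.
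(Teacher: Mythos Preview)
Your argument is correct and close in spirit to the paper's, but the order of operations differs in a way worth noting. The paper keeps the Carleman estimate in its \emph{additive} form with a free parameter $\beta$,
\[
 \lVert \Psi \rVert_{H^1(S_1(z_j))}^2
 \leq
 f_1(\beta)\,\lVert \psi \rVert_{L^2(B_\delta(z_j))}^2
 +
 f_2(\beta)\,\lVert \Psi \rVert_{H^1(S_3(z_j))}^2,
\]
sums this over $j\in\Q$ (with the \emph{same} $\beta$ for all $j$), and only then chooses $\beta$ so that the two global sums balance, turning the additive bound into the multiplicative one. You instead assert the multiplicative form already at the local level and then sum via Cauchy--Schwarz.

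Both routes reach the goal, but the paper's ``sum first, optimize once'' order is slightly cleaner for two reasons. First, it avoids Cauchy--Schwarz altogether. Second, and more importantly, the local multiplicative form you invoke is not literally what \cite[Prop.~3.4]{NakicTTV-18} states; to obtain it you must optimize $\beta$ separately for each $j$, and the optimal $\beta$ depends on the local ratio $\lVert \Psi\rVert_{H^1(S_3(z_j))}/\lVert\psi\rVert_{L^2(B_\delta(z_j))}$, which may force $\beta$ outside the admissible range $(\beta_1,\infty)$. This can be repaired (using $S_1(z_j)\subset S_3(z_j)$ to handle the degenerate regime), but you gloss over it. The paper's global optimization sidesteps this issue entirely, since only one choice of $\beta$ is needed. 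Your disjointness and geometry observations are the same as the paper's.
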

If $\Gamma = \Lambda_L = (-L/2 , L/2)^d$ with $L \in \NN_{\mathrm{odd}}:=\{1,3,5,\ldots\}$, Proposition~\ref{prop:interpolation1} has been stated and proved in~\cite[Proposition~3.4]{NakicTTV-18}, where we note that therein, the analog of $\Psi$ is called $F$.
Let us now sketch how to prove Proposition~\ref{prop:interpolation1} in the case of $1$-admissible $\Gamma$. We will omit technical details given in \cite[Proposition~3.4]{NakicTTV-18}, and focus only on the necessary modifications. A comprehensive proof can be found in \cite{Taeufer-diss}.
\par
By Lemma~\ref{lemma:properties_Psi} and Remark~\ref{rem:ext} we have $\Delta \Psi = \hat V \Psi$ almost everywhere on $\Gamma \times (-T,T)$.
As a consequence, Carleman estimates as in~\cite{NakicTTV-18} lead to the inequality
\begin{equation}
 \label{eq:after_Carleman_interpolation_1}
 \lVert \Psi \rVert_{H^1(S_1(z_j))}^2
 \leq
 f_1(\beta)
 \lVert (\partial_t \Psi)(\cdot, 0) \rVert_{L^2(B_\delta(z_j))}^2
 +
 f_2(\beta)
 \lVert \Psi \rVert_{H^1(S_3(z_j))}^2
\end{equation}
for all $j \in \Q$.
Here, $\beta$ is an optimization parameter which can be chosen over some range $(\beta_1, \infty)$, the prefactor $f_1(\beta)$ is exponentially growing, and the prefactor $f_2(\beta)$ is exponentially decreasing with respect to $\beta$.
Furthermore, the dependence of $\beta_1$ and the prefactors on $\delta$ and $\lVert V \rVert_\infty$ is explicit, see~\cite{NakicTTV-18} for details.
Summing Ineq.~\eqref{eq:after_Carleman_interpolation_1} over $j \in \Q$
and using that for different $j \in \Q$ the $S_1(z_j)$ (or $S_3(z_j)$ or $B_\delta(z_j)$ respectively) are mutually disjoint, one arrives at
\[
 \lVert \Psi \rVert_{H^1(U_{1,Z})}^2
 \leq
 f_1(\beta)
 \lVert (\partial_t \Psi)(\cdot, 0) \rVert_{L^2(S_{\delta,Z})}^2
 +
 f_2(\beta)
 \lVert \Psi \rVert_{H^1(U_{3,Z})}^2.
\]
Now a special choice of $\beta$ ensures that the two summands become equal.
This turns the additive estimate into a multiplicative one.
Tracking the dependence of the resulting constant in terms of $\delta$ and $\lVert V \rVert_\infty$ then yields the statement of Proposition~\ref{prop:interpolation1}, see~\cite{NakicTTV-18} for details.
\par
We shall also need a second interpolation estimate.
\begin{proposition} \label{prop:interpolation2}

For all $1$-admissible $\Gamma \subset \RR^d$, all $\delta \in (0,1/2)$, all $(1,\delta)$-equidistributed sequences $Z$, all real-valued $V \in L^\infty (\Gamma)$, all $E \in \RR$, and all $\psi \in \ran P_H (E)$
we have
 \[
\lVert \Psi \rVert_{H^1 (X_1)}
\leq D_2 \lVert \Psi \rVert_{H^1 (U_{1,Z} )}^{\gamma} \lVert \Psi \rVert_{H^1 (\Gamma^+ \times (-R_3,R_3))}^{1- \gamma} ,
\]
where
\begin{equation*} 
R_3 = 9 \euler \sqrt{d}, \quad
\gamma = \left( \log_2 \left( \frac{8.5 \euler\sqrt{d}}{\frac{1}{2} - \frac{1}{8}\sqrt{16-\delta^2}} \right) \right)^{-1} , \quad
D_2^{1/\gamma} = \delta^{-N_2 (1+ \lVert V \rVert_\infty^{2/3})} ,
\end{equation*}
and $N_2>0$ is a constant depending only on the dimension.
\end{proposition}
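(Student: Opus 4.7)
The estimate has the shape of an Agmon--Hadamard three-set interpolation, with $U_{1,Z}$ as the inner ``small'' set, $X_1 = \Gamma \times (-1,1)$ as the intermediate set we wish to control, and $\Gamma^+ \times (-R_3, R_3)$ as the ``large'' ambient set. My plan is to mirror \cite[Proposition~3.5]{NakicTTV-18}, replacing the bounded cube $\Lambda_L$ by a general $1$-admissible $\Gamma$ and taking care that all Carleman balls remain inside the reflected domain $\Gamma^+ \times (-T,T)$, where, by Remark~\ref{rem:ext}, the equation $(-\Delta + \hat V)\Psi = 0$ is still valid.

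First, using a Carleman estimate with a shifted radial quadratic weight centered at each lattice point $z_j$, $j \in \Q$, derive a local three-ball inequality of the form
\[
  \lVert \Psi \rVert_{H^1(B(z_j, r_2))}
  \leq
  C \, \lVert \Psi \rVert_{H^1(S_1(z_j))}^{\theta}
  \lVert \Psi \rVert_{H^1(B(z_j, r_3))}^{1-\theta},
\]
where $r_2 < r_3$ are chosen so that the small ball is comparable to $S_1(z_j)$ and the large ball is comparable to $B(z_j, R_3)$. The exponent $\theta$ depends only on the ratio of radii and the constant $C$ absorbs a factor $\delta^{-N(1+\lVert V \rVert_\infty^{2/3})}$ coming from optimization of the Carleman parameter, exactly as in the derivation of Proposition~\ref{prop:interpolation1}.

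Second, chain these local inequalities by a doubling argument: starting from balls of radius comparable to $\delta$ (the inner $S_1(z_j)$) and iteratively applying the three-ball inequality, after $n$ steps one controls $\Psi$ on balls of radius $\sim 2^n \delta$, with the overall Hölder exponent being the product of the individual $\theta$'s. One chooses $n \sim \log_2\bigl(R_3 / (\tfrac{1}{2} - \tfrac{1}{8}\sqrt{16-\delta^2})\bigr)$ so that the resulting large balls cover $X_1$ when translated to every $j \in \Q$; this fixes the explicit $\gamma$ in the statement. Summing over $j \in \Q$ and using that the translates of $X_1$ by integer vectors tile $\Gamma$, while the outer balls are contained in $\Gamma^+ \times (-R_3, R_3)$, yields the global estimate with the prefactor $D_2$ of the claimed form.

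The main technical obstacle is geometric rather than analytic: one must verify that the chain of balls $B(z_j, 2^k \delta)$ used in the iteration stays inside $\Gamma^+ \times (-R_3, R_3)$ for every $j \in \Q$, even when $z_j$ is close to $\partial\Gamma$. This is precisely why $\Gamma^+$ was constructed with expansion factor $M$ equal to the smallest power of $3$ exceeding $9\euler\sqrt{d} = R_3$, and why the reflection procedure was iterated $\log_3 M$ times; the iterated reflection produces a solution of $(-\Delta+\hat V)\Psi=0$ on a region large enough to accommodate all Carleman balls. Once this is in place, the tracking of $\delta$- and $\lVert V \rVert_\infty$-dependence through the chain proceeds exactly as in \cite{NakicTTV-18}, and produces the stated value of $D_2^{1/\gamma}$.
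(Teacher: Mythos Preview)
Your proposal diverges from the paper's argument in a way that prevents you from reaching the stated exponent~$\gamma$. The paper does \emph{not} chain three-ball inequalities. Instead it applies a \emph{single} Carleman estimate with logarithmic weight centered at each $z_j$, obtaining an \emph{additive} three-annulus inequality
\[
 \lVert \Psi \rVert_{H^1(V_2(z_j))}^2
 \leq
 g_1(\alpha)\,\lVert \Psi \rVert_{H^1(V_1(z_j))}^2
 +
 g_2(\alpha)\,\lVert \Psi \rVert_{H^1(V_3(z_j))}^2,
\]
where $V_k(z_j)=B_{R_k}(z_j)\setminus\overline{B_{r_k}(z_j)}\subset\RR^{d+1}$ are annuli (not balls), $r_1=\tfrac12-\tfrac18\sqrt{16-\delta^2}$, $r_3=8.5\,\euler\sqrt d$, and $g_1,g_2$ are explicit exponentials in the Carleman parameter~$\alpha$. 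One first \emph{sums} this additive estimate over $j\in\Q$ and only afterwards optimizes over~$\alpha$. The stated $\gamma=\bigl(\log_2(r_3/r_1)\bigr)^{-1}$ is precisely the exponent produced by this single-step optimization. A doubling chain of $n\sim\log_2(r_3/r_1)$ steps, each with H\"older exponent~$\theta<1$, would instead produce an overall exponent $\theta^{\,n}$, which decays \emph{exponentially} in~$n$ rather than like $1/n$; with such a~$\gamma$ you cannot recover the claimed form of $D_2^{1/\gamma}$, and the final constant $C_\sfuc$ in Theorem~\ref{thm:result1} would acquire the wrong dependence on~$\delta$.

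Two further points where your sketch misses the actual argument. First, the use of \emph{annuli} rather than balls is not incidental: the Carleman weight is singular at the centre, and more importantly the inclusion $V_1(z_j)\cap\RR^{d+1}_+\subset S_1(z_j)$ (needed to bound the inner term by $\lVert\Psi\rVert_{H^1(U_{1,Z})}$) fails for a full ball. Second, the step you describe as ``translates of $X_1$ tile $\Gamma$'' is not what is needed; the relevant geometric input is that $\bigcup_{j\in\Q}V_2(z_j)$ already covers $X_1=\Gamma\times(-1,1)$, and this is where the genuinely new work for general~$\Gamma$ lies. The paper handles it by a case distinction: if every unit cell in~$\Gamma$ has a next-to-nearest neighbour in~$\Gamma$, the cover is direct; otherwise $\diam\Gamma<5\sqrt d$ and a single annulus $V_2(z_j)$ already covers a reflected copy $\tilde\Gamma\times(-1,1)\subset\Gamma^+\times(-1,1)$ on which $\lVert\Psi\rVert_{H^1}$ equals $\lVert\Psi\rVert_{H^1(X_1)}$.
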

%
If $\Gamma = \Lambda_L = (-L/2 , L/2)^d$ with $L \in \NN_{\mathrm{odd}}$, Proposition~\ref{prop:interpolation2} has been stated and proved in~\cite[Proposition~3.5]{NakicTTV-18} with a slightly different $\gamma$.
Again we sketch the proof of Proposition~\ref{prop:interpolation2} in the case of general $\Gamma$. We will omit technical details given in \cite[Proposition~3.5]{NakicTTV-18}, and focus only on the necessary modifications. A comprehensive proof can be found in \cite{Taeufer-diss}.
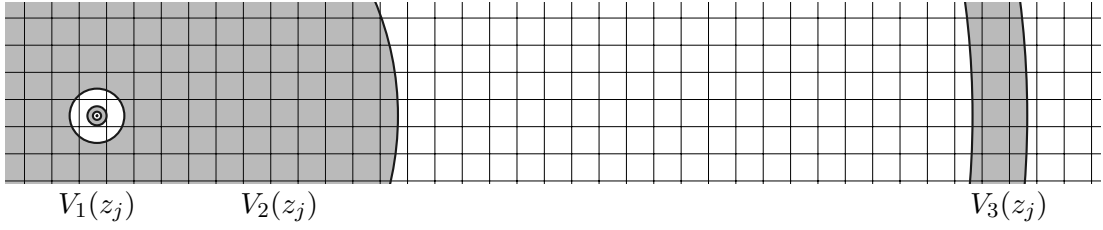
\begin{figure}[ht] \centering
\begin{tikzpicture}[scale = 1.2]
  \draw (0, -1) node {$V_1(z_j)$};
  \draw (2, -1) node {$V_2(z_j)$};
  \draw (10, -1) node {$V_3(z_j)$};
  \clip (-1,-.75) rectangle (11,1.25);
  \begin{scope}[scale = 0.3]
    \draw[shift={(.35,.6)},very thin] (-5,-5) grid (40,5);
    \draw[fill = annuli] (0,0) circle (1pt);
    \filldraw[thick, annuli, fill opacity = 0.3, even odd rule] (0,0) circle (0.15) -- (0,0) circle (0.35);
    \filldraw[thick, annuli, fill opacity = 0.3, even odd rule] (0,0) circle (1) -- (0,0) circle (11);
    \filldraw[thick, annuli, fill opacity = 0.3, even odd rule] (0,0) circle (32) -- (0,0) circle (34);
  \end{scope}
\end{tikzpicture}
\caption{Sketch of the annuli $V_1(z_j)$, $V_2(z_j)$, $V_3(z_j)$ in dimension $d = 2$}
\end{figure}

We define
\begin{align*}
r_1     &= \frac{1}{2} - \frac{1}{8}\sqrt{16-\delta^2}, & r_2 &=1,  &r_3 &= 8.5 \mathrm{e} \sqrt{d}, \\
R_1 &=1 - \frac{1}{4}\sqrt{16-\delta^2}, & R_2 &=8 \sqrt{d}, & R_3 &= 9 \mathrm{e} \sqrt{d},
\end{align*}
and annuli $V_k(x) = B_{R_k}(x) \backslash \overline{B_{r_k}(x)} \subset \RR^{d+1}$ for $k \in \{ 1,2,3 \}$ and $x \in \RR^d$.
Note that for $k \in \{ 2,3 \}$, the set $V_k(z_j)$ will not necessarily be a subset of $\Gamma \times (-R_3, R_3)$ any more, but a subset of $\Gamma^+ \times (-R_3,R_3)$.
Exploiting again the relation $\Delta \Psi = \hat V \Psi$ almost everywhere on $\Gamma^+ \times (-T,T)$ from Lemma~\ref{lemma:properties_Psi} and Remark~\ref{rem:ext}, and using a Carleman estimate as in~\cite{NakicTTV-18}, we obtain
\begin{equation}
 \label{eq:after_Carleman_interpolation_2}
 \lVert
  \Psi
 \rVert_{H^1(V_2(z_j))}^2
 \leq
 g_1(\alpha)
 \lVert
  \Psi
 \rVert_{H^1(V_1(z_j))}^2
 +
 g_2(\alpha)
 \lVert
  \Psi
 \rVert_{H^1(V_3(z_j))}^2
\end{equation}
for all $j \in \Q$.
Here, $\alpha$ is an optimization parameter which can be chosen over some range $(\alpha_1, \infty)$, the prefactor $g_1(\alpha)$ is exponentially growing, and the prefactor $g_2(\alpha)$ is exponentially decreasing with respect to $\alpha$.
Furthermore, the dependence of $\alpha_1$ and the prefactors on $\delta$ and $\lVert V \rVert_\infty$ is explicit.
It should be noted that our choice of $R_2$ and $r_3$ differs from the one in~\cite{NakicTTV-18} but we still have the relation $\euler R_2 < r_3$ which ensured that $g_1$ was exponentially growing.
We sum Ineq.~\eqref{eq:after_Carleman_interpolation_2} over $j \in \Q$ and find
\begin{equation}
 \label{eq:after_Carleman_interpolation_2_sum}
 \sum_{j \in \Q}
 \lVert
  \Psi
 \rVert_{H^1(V_2(z_j))}^2
 \leq
 g_1(\alpha)
 \sum_{j \in \Q}
 \lVert
  \Psi
 \rVert_{H^1(V_1(z_j))}^2
 +
 g_2(\alpha)
 \sum_{j \in \Q}
 \lVert
  \Psi
 \rVert_{H^1(V_3(z_j))}^2.
\end{equation}
We now further estimate the three sums in Ineq.~\eqref{eq:after_Carleman_interpolation_2_sum}.
First, we note that since $R_1 < \delta$, the sets $V_1(z_j)$, $j \in \Q$, are mutually disjoint. Furthermore we have $V_1 (z_j) \cap \RR_+^{d+1} \subset S_1 (z_j)$ for $j \in \Q$, hence $\cup_{j \in \Q} V_1(z_j) \cap \RR^{d+1}_+ \subset U_{1,Z}$.
Combining this with the antisymmetry of $\Psi$ in the $(d+1)$-coordinate we find
\[
 \sum_{j \in \Q}
 \lVert
  \Psi
 \rVert_{H^1(V_1(z_j))}^2
 =
 \lVert
  \Psi
 \rVert_{H^1( \bigcup_{j \in \Q}  V_1(z_j) )}^2
 =
 2
 \lVert
  \Psi
 \rVert_{H^1( \bigcup_{j \in \Q}  V_1(z_j) \cap \RR^{d+1}_+)}^2
 \leq
 2 \lVert \Psi \rVert_{H^1(U_{1,Z})}^2 .
\]
Moreover, for every $(x,t) \in \Gamma^+ \times (-R_3,R_3)$, there are at most $(2 R_3 + 2)^d$ indices $j \in \Q$ such that $(x,t) \in V_3(z_j)$.
Thus, we have
\[
 \sum_{j \in \Q}
 \lVert
  \Psi
 \rVert_{H^1(V_3(z_j))}^2
 \leq
 (2 R_3 + 2)^d
 \lVert
  \Psi
 \rVert_{H^1(\Gamma^1 \times (-R_3,R_3))}^2.
\]
Finally, we claim that
\[
 \sum_{j \in \Q}
 \lVert
  \Psi
 \rVert_{H^1(V_2(z_j))}^2
 \geq
 \lVert
  \Psi
 \rVert_{H^1(X_1)}^2.
\]
This can be seen by distinguishing two cases. First we assume that $\Gamma$ is sufficiently large. More precisely we assume each elementary cell $(-1/2,1/2)^d + j \subset \Gamma$, $j \in \ZZ^d $,
has a next-to-one neighbor in $\Gamma$, that is, $(-1/2,1/2)^d + j + 2 f_k \subset \Gamma$ for some $k \in \{ 1, \dots, d \}$, where $f_k$ denotes the $k$-th unit vector or its negative. Then it is easy to see that $$\bigcup_{j \in \Q} V_2 (z_j) \supset X_1 .$$
\par
Let us now assume the contrary. Then $\diam \Gamma < 5\sqrt{d}$. Now $\cup_{j \in \Q} V_2 (z_j)$ does not necessarily cover $X_1$, but it covers a set $\tilde \Gamma \times (-1,1)$, where $\tilde \Gamma \subset \Gamma^+$ is a translate of $\Gamma$ and $\lVert \Phi \rVert_{H^1 (\tilde \Gamma \times (-1,1))} = \lVert \Phi \rVert_{H^1 (\Gamma \times (-1,1))}$.
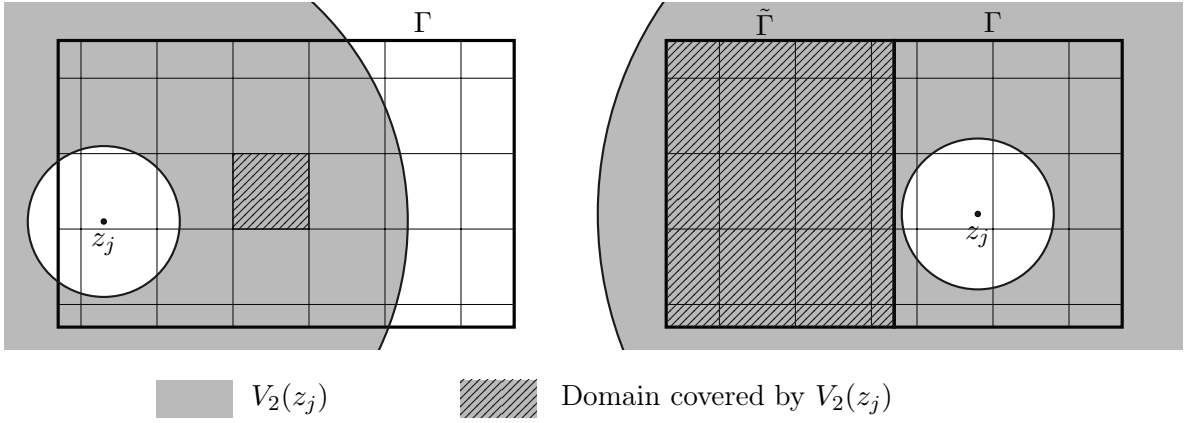
\begin{figure}[ht] \centering
  \begin{tikzpicture}
  \begin{scope}[xscale = -1]
   \begin{scope}
    \draw[very thin] (2.3,-.3) grid (5.3,3.5);
    \draw[very thick] (2.3,-.3) rectangle (5.3,3.5);
    \draw (4,3.75) node {$\Gamma$};
    \draw[very thin, shift={(-.4,0)}] (5.7,-.3) grid (8.7,3.5);
    \draw[very thick] (5.3,-.3) rectangle (8.3,3.5);
    \draw (7,3.75) node {$\tilde \Gamma$};
    \clip (1.5,-.6) rectangle (12,4);
    \draw[fill = annuli] (4.2,1.2) circle (1pt);
    \draw (4.2,.9) node {$z_j$};
    \filldraw[thick, annuli, even odd rule, fill opacity = 0.3] (4.2,1.2) circle (1cm) -- (4.2,1.2) circle (5cm);
    \filldraw[pattern=north east lines, pattern color = annuli] (5.3,-.3) rectangle (8.3,3.5);
   \end{scope}
   \begin{scope}[xshift = 10cm]
    \draw[very thin] (0.3,-.3) grid (6.3,3.5);
    \draw[very thick] (0.3,-.3) rectangle (6.3,3.5);
    \draw (1.5,3.75) node {$\Gamma$};
    \clip (0,-.6) rectangle (7,4);
    \draw[fill = annuli] (5.7,1.1) circle (1pt);
    \draw (5.7,.8) node {$z_j$};
   \filldraw[thick, annuli, even odd rule, fill opacity = 0.3]  (5.7,1.1) circle (1cm) -- (5.7,1.1) circle (4cm);
    \filldraw[pattern=north east lines, pattern color = annuli] (3,1) rectangle (4,2);
    \end{scope}
  \end{scope}
  \begin{scope}[yshift = -1.5cm, xshift = -15cm]
   \fill[annuli, even odd rule, fill opacity = 0.3] (0,0) rectangle (1,0.5);
   \draw (1.75,.25) node {$V_2(z_j)$};
   \begin{scope}[xshift = 4cm]
   \fill[annuli, even odd rule, fill opacity = 0.3] (0,0) rectangle (1,0.5);
   \fill[pattern=north east lines, even odd rule, pattern color = annuli] (0,0) rectangle (1,0.5);
   \draw (3.5,.25) node {Domain covered by $V_2(z_j)$};
   \end{scope}
  \end{scope}
\end{tikzpicture}
 \caption{The annuli $V_2(z_j)$ either each already cover another elementary cell in $\Gamma$, such that all of $\Gamma$ will be covered, or $\Gamma$ is so small that an annulus $V_2(z_j)$ already covers an entire translated copy $\tilde \Gamma$. (In this figure, the outer radii of $V_2(z_j)$ are not drawn according to its scale.)}
\end{figure}
\subsection{Proofs of Theorem~\ref{thm:result1} and Corollary~\ref{cor:scaling}}
\label{ssec:proofs}
To exploit the interpolation inequalities in the last section we need to show that the norms of the extended function $\Psi$ are comparable to norms of the original function $\psi$. This is possible, because $\Psi$ was chosen in an appropriate way in the first place.
\begin{proposition} \label{prop:upper_lower}
For all $1$-admissible $\Gamma \subset \RR^d$, $\Lambda \in \{\Gamma , \Gamma^+\}$, all $T > 0$, all $\tau \in (0,T]$, all real-valued $V\in L^\infty (\Gamma)$, all $E \in \RR$, and all $\psi \in \ran P_H (E)$ we have
\[
\frac{\tau}{2} \lVert \psi \rVert_{L^2 (\Lambda)}^2
\leq
\lVert \Psi \rVert_{H^1 (\Lambda \times (-\tau,\tau))}^2
\leq
2 \tau (1 + (1+\lVert V \rVert_\infty)\tau^2)
\euler^{2 \tau \sqrt{\max\{0,E\}} } \lVert \psi \rVert_{L^2 (\Lambda)}^2 .
\]
\end{proposition}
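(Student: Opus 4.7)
Proof plan for Proposition~\ref{prop:upper_lower}:

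The plan is to apply the spectral theorem to reduce all norms to integrals against the spectral measure $\mu_\psi(\cdot) = \lVert P_H(\cdot)\psi\rVert_{L^2(\Gamma)}^2$, which is supported in $(-\infty, E]$ because $\psi \in \ran P_H(E)$. With $c_t(\lambda) := \partial_t s_t(\lambda)$ (so $c_t(\lambda)=\cosh(\sqrt{\lambda}t)$ for $\lambda>0$, $c_t(0)=1$, $c_t(\lambda)=\cos(\sqrt{-\lambda}t)$ for $\lambda<0$), Lemma~\ref{lemma:properties_Psi} yields $\lVert \Psi(t)\rVert_{L^2(\Gamma)}^2=\int s_t(\lambda)^2\,\drm\mu_\psi$ and $\lVert \partial_t\Psi(t)\rVert_{L^2(\Gamma)}^2=\int c_t(\lambda)^2\,\drm\mu_\psi$. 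I first treat the case $\Lambda=\Gamma$ and then pass to $\Lambda=\Gamma^+$ by the reflection construction.

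For the lower bound I discard all but the $\partial_t\Psi$ part, which gives $\lVert\Psi\rVert_{H^1}^2 \geq \int_{-\tau}^\tau\!\int c_t(\lambda)^2\,\drm\mu_\psi\,\drm t$. It then suffices to prove $\int_{-\tau}^\tau c_t(\lambda)^2\,\drm t\geq\tau/2$ uniformly in $\lambda\in\RR$. For $\lambda\geq 0$ this is trivial since $c_t^2\geq 1$. For $\lambda<0$, set $a=\sqrt{-\lambda}$; one has $\int_{-\tau}^{\tau}\cos^2(at)\,\drm t=\tau+\sin(2a\tau)/(2a)$, and the estimate $\geq \tau/2$ follows by splitting into $a\tau\leq 1$ (where $\cos^2(at)\geq\cos^2(1)>1/4$ uniformly on $[-\tau,\tau]$) and $a\tau\geq 1$ (where $|\sin(2a\tau)/(2a)|\leq 1/(2a)\leq\tau/2$). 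This is the one non-routine step of the argument.

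For the upper bound I estimate the three contributions $\lVert\Psi\rVert_{L^2}^2$, $\lVert\partial_t\Psi\rVert_{L^2}^2$, $\lVert\nabla_x\Psi\rVert_{L^2}^2$ separately. Using $\sinh(x)/x\leq\cosh(x)\leq \euler^x$ for $x\geq 0$, one obtains $|s_t(\lambda)|\leq |t|\,\euler^{\tau\sqrt{E_+}}$ and $|c_t(\lambda)|\leq\euler^{\tau\sqrt{E_+}}$, giving pointwise-in-$t$ bounds on the first two terms. For the gradient, since $\Psi(t)\in\Dom(H)$ respects the boundary conditions, integration by parts yields $\lVert\nabla_x\Psi(t)\rVert_{L^2(\Gamma)}^2=\langle\Psi(t),H\Psi(t)\rangle-\langle\Psi(t),V\Psi(t)\rangle$. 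The first term is $\int\lambda s_t(\lambda)^2\,\drm\mu_\psi$, bounded above by $\sinh^2(\sqrt{E_+}\tau)\lVert\psi\rVert^2\leq\euler^{2\tau\sqrt{E_+}}\lVert\psi\rVert^2$ (the contribution of $\lambda\leq 0$ is nonpositive and dropped); the second is bounded by $\lVert V\rVert_\infty\tau^2\euler^{2\tau\sqrt{E_+}}\lVert\psi\rVert^2$. Integrating in $t$ over $(-\tau,\tau)$ and combining gives a bound of the stated form, with constants absorbed to obtain the clean factor $2\tau(1+(1+\lVert V\rVert_\infty)\tau^2)$.

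Finally, to pass from $\Lambda=\Gamma$ to $\Lambda=\Gamma^+$ I invoke the reflection construction of Remark~\ref{rem:ext}: both $\Psi$ and $\psi$ are extended by iterated (anti)symmetric reflections across the finite faces of $\Gamma$, and each reflection is an isometry preserving pointwise $|\Psi|^2$, $|\nabla_x\Psi|^2$, and $|\partial_t\Psi|^2$. Hence $\lVert\Psi\rVert_{H^1(\Gamma^+\times(-\tau,\tau))}^2 = N\,\lVert\Psi\rVert_{H^1(\Gamma\times(-\tau,\tau))}^2$ and $\lVert\psi\rVert_{L^2(\Gamma^+)}^2=N\,\lVert\psi\rVert_{L^2(\Gamma)}^2$ with the same integer $N$, so the ratio appearing in the inequality is preserved. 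The main obstacle is the uniform lower bound on $\int_{-\tau}^\tau c_t^2\,\drm t$; everything else is a systematic application of the spectral theorem, IBP, and elementary estimates on $\sinh,\cosh,\sin,\cos$.
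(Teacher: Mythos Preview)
Your approach is essentially the paper's: spectral calculus reduces everything to integrals of $s_t(\lambda)^2$ and $c_t(\lambda)^2$ against $\mu_\psi$, the lower bound discards the gradient and bounds $\int_{-\tau}^\tau c_t^2\,\drm t$ below (the paper splits at $2\sqrt{|\lambda|}\tau=\pi$ rather than $a\tau=1$, but your version is equally valid), and integration by parts handles $\lVert\nabla_x\Psi\rVert^2$. Your reflection argument for $\Lambda=\Gamma^+$ is correct and in fact more explicit than the paper, which treats $\Gamma$ and $\Gamma^+$ in one calculation without spelling out this reduction.

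There is one genuine slip in the upper bound: estimating $\lVert\partial_t\Psi\rVert_{L^2}^2$ and $\langle\Psi,H\Psi\rangle$ \emph{separately} and then ``absorbing constants'' does not yield the stated factor $2\tau(1+(1+\lVert V\rVert_\infty)\tau^2)$; your bounds give roughly $5\tau/2$ in place of the leading $2\tau$. The paper gets the exact constant by combining these two terms before estimating: since $\partial_t^2 s_t=\lambda s_t$, one has $c_t^2+\lambda s_t^2=(\partial_t s_t)^2+(\partial_t^2 s_t)s_t=\partial_t\bigl((\partial_t s_t)s_t\bigr)$, hence
\[
\int_0^\tau\bigl(c_t(\lambda)^2+\lambda s_t(\lambda)^2\bigr)\,\drm t=(\partial_t s_\tau)s_\tau,
\]
which equals $\sinh(\sqrt{\lambda}\tau)\cosh(\sqrt{\lambda}\tau)/\sqrt{\lambda}\leq\tau\cosh^2(\sqrt{\lambda}\tau)\leq\tau\,\euler^{2\sqrt{E}\tau}$ for $0<\lambda\leq E$ and is $\leq\tau$ for $\lambda\leq 0$. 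With this single adjustment (equivalently, using $\cosh^2+\sinh^2=\cosh(2\,\cdot\,)$ and integrating exactly) your argument recovers the stated constant. For the downstream application in Theorem~\ref{thm:result1} only the shape of the bound matters, so your looser constant would also suffice there.
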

\begin{proof}
If $\Lambda = \Gamma^+$, we recall that $\Psi \colon \Gamma \times (-T,T) \to \CC$ is extended to $\Gamma^+ \times (-T,T)$ as explained in Remark~\ref{rem:ext}.
For $\tau > 0$ we have
\[
\lVert \Psi \rVert_{H^1 (\Lambda \times (-\tau,\tau))}^2
= \int_{-\tau}^\tau \int_{\Lambda} \left(
\lvert \partial_{t} \Psi \rvert^2 +
\lvert \nabla_d \Psi \rvert^2+
\lvert \Psi \rvert^2
\right) \drm x \, \drm t .
\]
Using Green's theorem we have by Lemma~\ref{lemma:properties_Psi} and Remark~\ref{rem:ext}
\[
  \int_{\Lambda} \lvert \nabla_d \Psi \rvert^2 \drm x
  =
  -\int_{\Lambda} (\Delta_d \Psi) \overline{\Psi} \drm x
  =
  - \int_{\Lambda} V\lvert \Psi\rvert^2 \drm x +
 \int_{\Lambda} (\partial^2_{t} \Psi) \overline{\Psi} \drm x
\]
for all $t \in \RR$.
First, we estimate
\begin{align*}
 \lVert \Psi \rVert_{H^1 (\Lambda \times (-\tau,\tau))}^2
 &=
 \int_{-\tau}^\tau \int_{\Lambda} \left(
 \lvert \partial_{t} \Psi \rvert^2  - V\lvert \Psi\rvert^2 + (\partial^2_{t} \Psi) \overline{\Psi}
 + \lvert \Psi \rvert^2 \right) \drm x \, \drm t \\
 &\leq
 \int_{-\tau}^\tau \int_{\Lambda} \left(
\lvert \partial_{t} \Psi \rvert^2  + (\partial^2_{t} \Psi) \overline{\Psi}
+ (1+ \lVert V \rVert_\infty) \lvert \Psi \rvert^2 \right) \drm x \, \drm t \\
&=  2 \int_{-\infty}^E I (\lambda) \drm \lVert P_H(\lambda) \psi \rVert^2 ,
\end{align*}
where
\begin{align*}
  I (\lambda)
 &= \int_{0}^\tau \left( (1+\lVert V \rVert_\infty)\funs_t (\lambda)^2 + (\partial_t \funs_t (\lambda))^2 + (\partial_t^2 \funs_t (\lambda) )\funs_t (\lambda) \right) \drm t \\
 &= (1+\lVert V \rVert_\infty) \int_0^\tau  \funs_t(\lambda)^2 \drm t + (\partial_t \funs_t (\lambda))\funs_\tau (\lambda) .
\end{align*}
In order to estimate $I(\lambda)$ for $\lambda \in (-\infty , E]$ from above, we distinguish two cases.
If $\lambda \leq 0$ we use $s_t (\lambda) \leq t$ and $(\partial_t s_t (\lambda))  s_\lambda(t) \leq t$ for $t > 0$, and obtain
\[
 I (\lambda) \leq (1 + \lVert V \rVert_\infty) \tau^3/3 + \tau .
\]
If $E,\lambda > 0$ we use $\sinh(\sqrt{\lambda} t)/\sqrt{\lambda} \leq t \cosh(\sqrt{\lambda} t)$ for $t > 0$, and $\cosh^2(\sqrt{\lambda} \tau) \leq \euler^{2 \sqrt{\lambda} \tau}$, and obtain for all $\lambda \in (0 , E]$
\begin{align*}
 I (\lambda) &= (1+\lVert V \rVert_\infty) \int_0^\tau \frac{\sinh^2(\sqrt{\lambda} t)}{\lambda} \drm t + \sinh(\sqrt{\lambda} \tau) \cosh(\sqrt{\lambda} \tau)/\sqrt{\lambda} \\
&\leq ((1 + \lVert V \rVert_\infty ) T^3 \cosh^2(\sqrt{\lambda} \tau) + \tau \cosh^2(\sqrt{\lambda} \tau) ) \leq ((1 + \lVert V \rVert_\infty ) \tau^3 + \tau) \euler^{2 \sqrt{E} \tau}.
\end{align*}
Hence, we conclude the upper bound of the statement. For the lower bound we drop the gradient term and obtain
 \begin{align*}
\lVert \Psi \rVert_{H^1 (\Lambda \times (-\tau,\tau))}^2 &\ge
\int_{-\tau}^\tau \int_{\Lambda} \left(
\lvert \partial_{t} \Psi \rvert^2 +
\lvert \Psi \rvert^2
\right) \drm x \, \drm t =
2 \int_{-\infty}^E \tilde I (\lambda) \drm \lVert P_H(\lambda) \psi \rVert^2 ,
\end{align*}
where
\[
 \tilde I (\lambda) = \int_{0}^\tau\left[ \funs_t (\lambda)^2+ (\partial_t \funs_t (\lambda))^2 \right] \drm t .
\]
If $\lambda = 0$, the lower bound $\tilde I (\lambda) \geq \tau$ follows immediately.
Else, we use
$(\partial_t \funs_t (\lambda))^2 \geq \cos(\sqrt{\lvert \lambda \rvert} t)$ to obtain
\[
 \tilde I (\lambda) \geq
 \int_0^\tau \cos^2(\sqrt{\lvert \lambda \rvert} t) \drm t
 =
 \frac{\tau}{2} + \frac{\sin(2 \sqrt{\lvert \lambda \rvert} \tau)}{4 \sqrt{\lvert \lambda \rvert}}.
\]
Now, if $2 \sqrt{\lvert \lambda \rvert} \tau < \pi$, the sinus term is positive and we drop it to find $ \tilde I (\lambda) \geq
 \tau/2$.
If $2 \sqrt{\lvert \lambda \rvert} \tau \geq \pi$, we have $\sin(2 \sqrt{\lvert \lambda \rvert} \tau ) \geq -1$ and estimate
\[
 \tilde I (\lambda)
 \geq
 \frac{\tau}{2} - \frac{1}{4 \sqrt{\lvert \lambda \rvert}}
 =
 \frac{\tau}{2} - \frac{\pi}{4 \pi\sqrt{\lvert \lambda \rvert}}
 \geq
 \frac{\tau}{2} - \frac{\tau}{2 \pi}
 \geq
 \frac{\tau}{4}.
\]
Hence, we conclude the lower bound of the statement.
\end{proof}
Now we are in the position to complete the proofs of Theorem~\ref{thm:result1} and Corollary~\ref{cor:scaling}.

\begin{proof}[Proof of Theorem~\ref{thm:result1}]
Since $\ran P_H(E) \subset \ran P_H(0)$ for $E < 0$ it suffices to prove the statement in the case $E \geq 0$.
We use the upper bound of Proposition~\ref{prop:upper_lower} with $T = R_3$, the lower bound with $T = 1$, and $\lVert \psi \rVert_{L^2 (\Gamma^+)} \leq (3 R_3)^d \lVert \psi \rVert_{L^2 (\Gamma)}$
to obtain
\begin{align*}
  \frac{\lVert \Psi \rVert_{H^1 (\Gamma^+ \times (-R_3,R_3))}^2 }{\lVert \Psi \rVert_{H^1 (X_1)}^2}
  \leq
  4 \cdot R_3 (1 + (1+\lVert V \rVert_\infty) R_3^2) \euler^{2 R_3 \sqrt{E}} (3R_3)^d =:  D_3^2 .
\end{align*}
Then Propositions~\ref{prop:interpolation1} and \ref{prop:interpolation2} imply
\begin{align*}
\lVert \Psi \rVert_{H^1 (\Gamma^+ \times (-R_3,R_3))} &\leq D_3 \lVert \Psi \rVert_{H^1 (X_1)} \\
&\leq D_1^\gamma D_2 D_3
\lVert \Psi \rVert_{H^1 (\Gamma^+ \times (-R_3,R_3))}^{1 - \gamma}
\lVert (\partial_{t} \Psi) (\cdot , 0) \rVert_{L^2 (\equiset)}^{\gamma / 2}
\lVert \Psi \rVert_{H^1 (U_{3,Z})}^{\gamma / 2} .
\end{align*}
We recall $R_3 = 9 \euler \sqrt{d}$. By the definition of $U_{3,Z}$ and $\Gamma^+$ we have $U_{3,Z} \subset \Gamma^+ \times (-R_3,R_3)$. Hence,
\[
\lVert \Psi \rVert_{H^1 (\Gamma^+ \times (-R_3,R_3))} \leq
 D_1^2 D_2^{2/\gamma} D_3^{2/\gamma}
\lVert (\partial_{t} \Psi) (\cdot , 0) \rVert_{L^2 (\equiset)} .
\]
By Proposition~\ref{prop:upper_lower}, the square of the left hand side is bounded from below by
\[
\lVert \Psi \rVert_{H^1 (\Gamma^+ \times (-R_3,R_3))}^2
\geq
\frac{R_3}{2} \lVert \psi \rVert_{L^2 (\Gamma^+)}^2.
\]
 Putting everything together we obtain by using $(\partial_{t} \Psi) (\cdot , 0) = \psi$
 \begin{equation*}
 \frac{R_3}{2} \lVert \psi \rVert_{L^2 (\Gamma)}^2
  \leq
  \frac{R_3}{2} \lVert \psi \rVert_{L^2 (\Gamma^+)}^2 \leq
 D_1^4 \left( D_2 D_3 \right)^{4/\gamma} \lVert \psi \rVert_{L^2 (\equiset )}^2 .
\end{equation*}
>From the definitions of $D_k$, $k \in \{1,2,3\}$, and $\gamma$ one calculates that
\[
  D_1^4 \left( D_2 D_3 \right)^{4/\gamma} 2 R_3^{-1} \leq C_\sfuc^{-1}
  =
  \delta^{-N \bigl(1 + \lVert V \rVert_\infty^{2/3} + \sqrt{E} \bigr)}
\]
with some constant $N > 0$ depending only on the dimension.
%
\end{proof}
\begin{proof}[Proof of Corollary~\ref{cor:scaling}]
First note that
\[
\psi \in \ran P_H (E) \Leftrightarrow \psi \in \ran P_{H-\lambda} (E-\lambda)
\]
thus integrals of $\psi $ can be estimated by using either of the two subspace properties.
Now it is possible to perform an optimization over the spectral shift parameter $\lambda$.

To analyze the dependence of estimate on the scale parameter $G$, fix $V \in L^\infty (\Gamma)$ and $\psi \in \ran P_H (E)$. We define $\tilde V \colon G^{-1} \Gamma \to \RR$ by $\tilde V (x) =  G^2 V (Gx)$, the operator $\tilde H = -\Delta + \tilde V$ on $L^2 (G^{-1} \Gamma)$, the bounded operator $S \colon L^2(\Gamma)\to L^2(G^{-1} \Gamma)$ by $(S f)(x) = f(Gx)$, and $\tilde \psi \in L^2 (G^{-1} \Gamma)$ by $\tilde \psi = S \psi$. By a straightforward calculation one shows $\tilde H = G^2 S H  S^{-1}$.
We also define the mapping $\RR \ni \lambda \mapsto \hat P(\lambda) = S P_H(\lambda) S^{-1}$.
Then $\{\hat P(\lambda) \colon \lambda \in \RR \}$ is the resolution of identity corresponding to the operator $S H S^{-1}$.
This follows from the formula $S^{\ast} = G^{-d} S^{-1}$.
By assumption we have $P_H (E) \psi = \psi$, hence $\hat P (E) \tilde \psi = \tilde \psi$. Since  $P_{\tilde H}(\lambda) = \hat P (G^{-2} \lambda)$ this implies that
\begin{equation*}
  \tilde \psi \in \ran P_{\tilde H} (G^2 E) .
\end{equation*}
By construction we have
\[
G^{-1} \Gamma = \bigtimes_{i=1}^d (G^{-1} \alpha_i , G^{-1} \beta_i),
\]
$G^{-1} \beta_i - G^{-1} \alpha_i \geq 1$, $G^{-1} \equiset$ arises from a $(1,\tilde\delta)$-equidistributed sequence where $\tilde\delta = \delta / G \in (0,1/2)$. Hence we can apply Theorem~\ref{thm:result1} and obtain by substitution
 \[
  \lVert \psi \rVert_{L^2 (\equiset)}^2
  =
  G^d \lVert \tilde \psi \rVert_{L^2 (G^{-1}\equiset)}^2
  \geq
  C_\sfuc^G G^d\lVert \tilde\psi \rVert_{L^2 (G^{-1} \Gamma)}^2
  =
  C_\sfuc^G \lVert \psi \rVert_{L^2 (\Gamma)}^2 ,
 \]
 where $C_\sfuc^G = C_\sfuc (d,\delta / G , E G^2 , \lVert V \rVert_\infty G^2)$ with $C_\sfuc$ from Theorem~\ref{thm:result1}.
 \end{proof}
\section{Perturbation of spectral edges and eigenvalues in gaps}
\label{sec:perturb}

Throughout this section all occurring operators are defined on the same infinite dimensional Hilbert space $\cH$
and we denote the domain of an operator $A$ by $\cD(A)$.
In this section we prove lifting estimates for edges of the components of the essential spectrum and also for the eigenvalues in spectral gaps.
As a starting point we recall that the notion of `spectral edges' is well defined and gives rise to a Lipschitz-continuous function $t \mapsto f(t)$ of a coupling constant.

\begin{lemma} \label{lem:lipschitz_left_edge}
Let $A$ be self-adjoint, $B\neq 0$ bounded, symmetric and non--negative.
Let $a\in \sigma_{\mathrm{ess}}(A)$ and let $b > a$ be such that $(a,b)\cap \sigma_{\mathrm{ess}}(A) = \emptyset$. Let
\[
  t_0 = (b-a)/\lVert B \rVert
  \quad\text{ and }\quad
  t_- = \max \{0, - t\}.
\]
Then
\[
  f\colon (-t_0,t_0) \to \mathbb{R},\quad
  f(t) = \sup \left( \sigma_{\mathrm{ess}}(A + t B) \cap (-\infty, b-t_- \lVert B \rVert) \right),
\]
satisfies for all $t\in (-t_0,t_0)$,
\begin{enumerate}[(a)]
  \item $f(0) = a$,
  \item $f(t)\in \sigma_{\mathrm{ess}}(A + t B)$,
  \item $(f(t), b - t_- \lVert B \rVert) \cap \sigma_{\mathrm{ess}}(A + t B) = \emptyset $, and
  \item $f$ is Lipschitz continuous with Lipschitz coefficient $\lVert B \rVert $.
\end{enumerate}
\end{lemma}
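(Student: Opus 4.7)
Statement (a) is immediate: at $t=0$ we have $t_-=0$, so $f(0)=\sup(\sigma_\ess(A)\cap(-\infty,b))$, and the hypotheses $a \in \sigma_\ess(A)$ together with $(a,b)\cap\sigma_\ess(A)=\emptyset$ force this supremum to equal $a$.

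For (b)--(d), the plan relies on two classical ingredients. The first is Hausdorff--Lipschitz stability of the essential spectrum: for self-adjoint $A$ and bounded self-adjoint $K$, every point of $\sigma_\ess(A+K)$ lies within $\|K\|$ of $\sigma_\ess(A)$, as one sees by a standard Weyl-sequence perturbation argument (if $\phi_n$ is singular for $A+K$ at $\lambda$, then $\|(A-\lambda)\phi_n\| \leq \|K\|+o(1)$, and since $\phi_n \rightharpoonup 0$ one extracts a singular sequence for $A$ near $\lambda$). The second is the sign condition $B \geq 0$: for $s \leq t$ it gives $A + tB \geq A + sB$ in form sense, and standard variational principles for eigenvalues in gaps of the essential spectrum then force the relevant edges of the essential spectrum to move monotonically with $t$. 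Together with the gap hypothesis $(a,b)\cap\sigma_\ess(A)=\emptyset$, these inputs allow quantitative tracking of the edge $f(t)$.

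For (b) and (c), fix $t \in (-t_0, t_0)$ and set $S_t := \sigma_\ess(A+tB) \cap (-\infty, b-t_-\|B\|)$. Hausdorff stability applied at $a \in \sigma_\ess(A)$ produces $\mu \in \sigma_\ess(A+tB)$ with $|\mu-a| \leq |t|\|B\|$; monotonicity together with $|t| < t_0=(b-a)/\|B\|$ gives $\mu < b - t_-\|B\|$, so $S_t \neq \emptyset$. Conversely, every $\lambda \in \sigma_\ess(A+tB)$ lies within $|t|\|B\|$ of $\sigma_\ess(A) \subset (-\infty,a]\cup[b,\infty)$, hence either $\lambda \leq a+|t|\|B\|$ or $\lambda \geq b - |t|\|B\|$; monotonicity then forces any point of $\sigma_\ess(A+tB)$ originating from the upper essential piece of $A$ to lie at or above $b - t_-\|B\|$. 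Thus $\sup S_t \leq a + |t|\|B\| < b - t_-\|B\|$, and together with closedness of $\sigma_\ess(A+tB)$ this yields (b); (c) is then immediate from the definition of supremum. For (d), viewing $A+tB = (A+sB) + (t-s)B$ as a nonnegative $|t-s|\,\|B\|$-bounded perturbation of $A+sB$ and rerunning the same Hausdorff-plus-monotonicity reasoning yields $0 \leq f(t)-f(s) \leq (t-s)\|B\|$ whenever $s \leq t$, i.e., the Lipschitz bound with constant $\|B\|$.

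The main technical obstacle is the case $t<0$ in (b)--(c): because $B \geq 0$ and $t<0$, both the lower and upper essential pieces of $\sigma_\ess(A)$ can drift \emph{downward} under the perturbation $tB$, and the cutoff $b - t_-\|B\|$ in the definition of $f(t)$ is chosen precisely as the worst-case lowest location the perturbed upper piece can reach. Verifying that $f(t)$ really does not jump to track points of the upper piece --- i.e.\ that $\sigma_\ess(A+tB)$ cannot enter $(f(t), b-t_-\|B\|)$ from above --- is what makes the interval $(-t_0, t_0)$ sharp and is the delicate step that forces one to use both the Hausdorff bound and the sign condition on $B$ simultaneously.
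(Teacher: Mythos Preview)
Your overall strategy---Hausdorff stability of the essential spectrum plus a one-sided shift coming from $B\ge0$---is exactly what the paper uses, but the paper packages the crucial one-sided step as a separate lemma (Lemma~\ref{lem:lemma_albrecht_s}) and proves it cleanly via a finite-rank perturbation trick. Your write-up, by contrast, leaves precisely this step unjustified.

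The gap is your invocation of ``monotonicity''. You write that $A+tB\ge A+sB$ in form sense and that ``standard variational principles for eigenvalues in gaps of the essential spectrum then force the relevant edges of the essential spectrum to move monotonically with $t$''. Variational principles describe \emph{eigenvalues}; they do not by themselves yield monotonicity of \emph{edges of the essential spectrum}. In your argument for (b)--(c) you then rely on this monotonicity in an essential way: the two-sided Hausdorff bound alone tells you only that a point of $\sigma_\ess(A+tB)$ coming from the upper component lies in $[b-|t|\,\|B\|,\infty)$, not in $[b-t_-\|B\|,\infty)$; for $t>0$ these differ, and without the one-sided shift you cannot exclude leakage of the upper essential piece below $b$. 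Likewise in (d), the inequality $f(t)\ge f(s)$ for $s\le t$ needs the one-sided statement, not just the symmetric Hausdorff bound. Your Weyl-sequence sketch (``one extracts a singular sequence for $A$ near $\lambda$'') gives only the symmetric bound $\dist(\lambda,\sigma_\ess(A))\le\|K\|$, and even that requires an extra step (projecting out the finite-rank spectral piece in the gap) that you do not mention.

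The paper closes this gap as follows: if $(a,b)\cap\sigma_\ess(A)=\emptyset$, remove the finitely many eigenvalues in any compact subinterval by a finite-rank $T$, apply the one-sided full-spectrum shift for semidefinite perturbations (so that $(a+\epsilon+\|B\|,b-\epsilon)\cap\sigma(A+T+B)=\emptyset$), and use invariance of $\sigma_\ess$ under finite-rank perturbations to conclude $(a+\|B\|,b)\cap\sigma_\ess(A+B)=\emptyset$. This is Lemma~\ref{lem:lemma_albrecht_s}, and from it all four assertions of Lemma~\ref{lem:lipschitz_left_edge} follow directly. If you insert this argument (or an equivalent one) where you currently write ``monotonicity'', your proof becomes complete and is then essentially the paper's.
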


\begin{remark}
There is an analogous version of Lemma~\ref{lem:lipschitz_left_edge} for the case where $(a,b)\cap \sigma_{\mathrm{ess}}(A) = \emptyset$ and $b \in \sigma_\ess(A)$,
cf.~Cor.~\ref{cor:lifting_lower_edge}.
Lemma~\ref{lem:lipschitz_left_edge} holds as well in the case of indefinite $B$, provided one replaces $t_0$ by $ (b-a)/(2 \lVert B \rVert) $
and claim (c) by   $(f(t), b - \vert t\vert \lVert B \rVert) \cap \sigma_{\mathrm{ess}}(A +  t B) = \emptyset $.
Obviously, an analog of Lemma~\ref{lem:lipschitz_left_edge} holds also for isolated eigenvalues.
See for instance \cite{Veselic-K-08} and \cite{Seelmann-14-diss} for some background.
\end{remark}

Lemma~\ref{lem:lipschitz_left_edge}  follows immediately from

\begin{lemma}
 \label{lem:lemma_albrecht_s}
 Let $A$ be self-adjoint and $B$ bounded and non-negative.
 If $(a,b) \subset \RR$, then
 \begin{equation}
 \label{eq:seelmann_1}
  (a,b) \cap \sigma_\ess(A) = \emptyset
  \quad
  \Rightarrow
  \quad
  (a + \lVert B \rVert, b) \cap \sigma_\ess(A+B) = \emptyset,
 \end{equation}
 and
 \begin{equation}
 \label{eq:seelmann_2}
  (a,b) \cap \sigma_\ess(A) \neq \emptyset
  \quad
  \Rightarrow
  \quad
  (a, b + \lVert B \rVert) \cap \sigma_\ess(A + B) \neq \emptyset.
 \end{equation}
 Here we use the convention that $(c,d) = \emptyset$ if $c \geq d$.
\end{lemma}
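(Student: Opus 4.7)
The plan is to prove both implications by contradiction with a common mechanism: construct a Weyl singular sequence for the operator hypothesised to contain $\mu$ in its essential spectrum, track the $A$- or $(A+B)$-spectral measures of its elements, and derive the contradiction from a sharp variance bound. The starting observation is the elementary estimate
\[
  \|(B - \nu)\psi\|^2 = \langle B^2\psi, \psi\rangle - \nu^2 \leq \|B\|\,\nu - \nu^2 = \nu(\|B\| - \nu),
\]
valid for any unit vector $\psi$ with $\nu := \langle B\psi, \psi\rangle \in [0, \|B\|]$, using $B^2 \leq \|B\| B$.

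For implication~\eqref{eq:seelmann_1}, I assume for contradiction $\mu \in (a + \|B\|, b) \cap \sigma_\ess(A + B)$ and fix an orthonormal singular Weyl sequence $\{\psi_n\} \subset \cD(A)$ with $(A + B - \mu)\psi_n \to 0$ (hence $\psi_n \rightharpoonup 0$). Setting $\nu_n := \langle B\psi_n, \psi_n\rangle$ and passing to a subsequence, $\nu_n \to \nu \in [0, \|B\|]$. Since $\|A\psi_n\| \leq \|(A+B)\psi_n\| + \|B\|$ is uniformly bounded, the $A$-spectral probability measures $\tilde\rho_n(\cdot) := \|\mathbf{1}_\cdot(A)\psi_n\|^2$ are tight with uniformly bounded second moments. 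The identity $(A - \mu + \nu_n)\psi_n = -(B - \nu_n)\psi_n + o(1)$ yields
\[
  \int \lambda\,\drm \tilde\rho_n = \mu - \nu_n + o(1), \qquad \mathrm{Var}(\tilde\rho_n) \leq \nu_n(\|B\| - \nu_n) + o(1).
\]
By Prokhorov I extract a weak limit $\tilde\rho$, and uniform integrability transports the mean and variance bound. The hypothesis $(a, b) \cap \sigma_\ess(A) = \emptyset$ entails that $\mathbf{1}_K(A)$ is finite-rank (hence compact) for every compact $K \subset (a, b)$, so $\tilde\rho_n(K) = \langle \mathbf{1}_K(A)\psi_n, \psi_n\rangle \to 0$ and therefore $\tilde\rho((a, b)) = 0$. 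Thus $\tilde\rho$ is supported in $(-\infty, a] \cup [b, \infty)$ with mean $m := \mu - \nu \in (a, b)$, using $\mu > a + \|B\| \geq a + \nu$. Integrating the pointwise inequality $(\lambda - a)(\lambda - b) \geq 0$ on this support against $\tilde\rho$ yields $\mathrm{Var}(\tilde\rho) \geq (m - a)(b - m)$. The contradiction then comes from the arithmetic inequality $(m - a)(b - m) > \nu(\|B\| - \nu)$: setting $u := \mu - a > \|B\|$ and $v := b - \mu > 0$, the difference equals $uv + \nu(u - v - \|B\|)$, which is linear in $\nu$ with values $uv > 0$ at $\nu = 0$ and $(u - \|B\|)(v + \|B\|) > 0$ at $\nu = \|B\|$, hence positive throughout $[0, \|B\|]$.

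Implication~\eqref{eq:seelmann_2} is obtained by the symmetric argument, starting from a Weyl singular sequence for $A$ at $\mu \in (a, b) \cap \sigma_\ess(A)$ and analysing the $(A + B)$-spectral measures of $\psi_n$ instead. Their means converge to $m := \mu + \nu$, the variance bound $\nu(\|B\| - \nu)$ is unchanged, and the hypothetical absence of essential spectrum of $A + B$ in $(a, b + \|B\|)$ forces the weak limit to be supported in $(-\infty, a] \cup [b + \|B\|, \infty)$ with mean $m \in (a, b + \|B\|)$. The corresponding arithmetic identity
\[
  (m - a)(b + \|B\| - m) - \nu(\|B\| - \nu) = (\mu - a)(b - \mu) + (\mu - a)(\|B\| - \nu) + \nu(b - \mu) > 0
\]
closes the argument.

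The chief technical obstacle is the management of the weak limit of spectral measures in infinite dimensions, which requires Prokhorov tightness (from the uniform second-moment bound), uniform integrability (to push the mean past the limit), and the compactness of finite-rank spectral projections on the gap (to force the limit measure to vanish there). The conceptual core, however, is the sharp variance bound $\nu(\|B\| - \nu)$, which precisely encodes why non-negative perturbations of norm $\|B\|$ cannot open new essential spectrum more than distance $\|B\|$ into a gap.
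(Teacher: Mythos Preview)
Your argument is correct, but it is very different from the paper's. The paper's proof is a three-line reduction: since $(a,b)\cap\sigma_\ess(A)=\emptyset$, for each $\epsilon>0$ one removes the finitely many eigenvalues in $[a+\epsilon,b-\epsilon]$ by a finite-rank perturbation $T$, invokes the elementary full-spectrum fact (quoted from \cite{Seelmann-17-arxiv}) that $0\le B$ shifts a spectral gap of $A+T$ by at most $\|B\|$ from the left, and then uses invariance of $\sigma_\ess$ under compact perturbations to return from $A+T+B$ to $A+B$. Implication~\eqref{eq:seelmann_2} is obtained by contraposition (together with the reflection $A\mapsto -A-B$). In contrast, you work directly with singular Weyl sequences: you track the $A$-spectral measures of the $\psi_n$, pass to a weak limit via Prokhorov, and derive the contradiction from the sharp inequality $\|(B-\nu)\psi\|^2\le\nu(\|B\|-\nu)$ combined with the gap lower bound $\mathrm{Var}(\tilde\rho)\ge(m-a)(b-m)$ for a probability measure supported outside $(a,b)$ with mean $m\in(a,b)$. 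Your route is fully self-contained and makes the mechanism (how far a non-negative perturbation of given norm can move spectral mass into a gap) quantitatively transparent; the paper's route is much shorter but outsources the essential step to a cited black box. Two small points worth tightening in your write-up: (i) the conclusion $\tilde\rho((a,b))=0$ needs a continuous bump function $f$ with $\mathbf{1}_K\le f$ and $\supp f\subset(a,b)$ to go through weak convergence, rather than $\mathbf{1}_K$ itself; (ii) the variance bound transfers to $\tilde\rho$ only by lower semicontinuity ($\mathrm{Var}(\tilde\rho)\le\liminf\mathrm{Var}(\tilde\rho_n)$), not by ``uniform integrability transports the variance'', though this inequality is all you need.
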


\begin{remark}
 \label{rem:albrecht_s}
 Note that since the essential spectrum is closed, the statements of Lemma~\ref{lem:lemma_albrecht_s}
also hold if the corresponding intervals are replaced by closed or semi-closed intervals.
If one is interested in negative perturbations $-B$, one applies the contraposition of  \eqref{eq:seelmann_1} and \eqref{eq:seelmann_2}.
\end{remark}

\begin{proof}[Proof of Lemma~\ref{lem:lemma_albrecht_s}]
Let us assume $(a,b) \cap \sigma_\ess(A) = \emptyset$.
Then for every $\epsilon > 0$, $A$ has at most finitely many eigenvalues with finite multiplicity in $(a + \epsilon, b - \epsilon)$.
Thus, there is a finite rank perturbation $T$ such that $(a + \epsilon, b - \epsilon) \cap \sigma (A + T) = \emptyset$.
 From Proposition~2.1 in \cite{Seelmann-17-arxiv} we infer that $(a + \epsilon + \lVert B \rVert, b - \epsilon) \cap \sigma(A + T + B) = \emptyset$.
Since finite rank perturbations leave the essential spectrum unchanged, we obtain
 \[
  (a + \epsilon + \lVert B \rVert , b - \epsilon ) \cap \sigma_\ess(A + B) = \emptyset
  \quad
  \text{for all}
  \quad
  \epsilon > 0.
 \]
 This shows~\eqref{eq:seelmann_1}.
 The relation~\eqref{eq:seelmann_2} is equivalent to \eqref{eq:seelmann_1} by contraposition.
\end{proof}

Lemma~\ref{lem:lipschitz_left_edge} provides an \emph{upper bound} on the movement speed of spectral edges.
In this section, we complement this by providing \emph{lower bounds} on the lifting of spectral edges and eigenvalues in gaps of the essential spectrum.
All the results hold under an abstract positivity condition on the perturbation.
This condition is in particular satisfied for Sch\"odinger operators with appropriate perturbations as in Section~\ref{ssec:model}.
This connection will be spelled out in Section~\ref{sec:application}.

\subsection{Below the essential spectrum}
As a warm up and to introduce notation we first consider the infimum of the essential spectrum and eigenvalues below it.
For a lower semi-bounded self-adjoint operator $A$, we set $\lambda_\infty (A) = \inf \sigma_{\ess} (A)$. Note that $\lambda_\infty (A) = \infty$ if $\sigma_\ess (A) = \emptyset$.
Moreover, we denote by $\{ \lambda_{k} (A) \}_{k \in \NN}$ the sequence of eigenvalues of $A$ below $\sigma_{\mathrm{ess}} (A)$, enumerated non-decreasingly and counting multiplicities.
If there are exactly $N \in \NN_0$ eigenvalues in $(-\infty , \lambda_\infty (A))$ then we set $\lambda_{k} (A) =  \lambda_\infty (A)$ for all $k \in \NN$ with $k > N$.
If $A$ has purely discrete spectrum then the sequence $\lambda_k(A)$, $k \in \NN$, is the non-decreasing sequence of eigenvalues of $A$.

\begin{lemma}
 \label{lem:bottom}
 Let $A$ be self-adjoint and lower semi-bounded, $B$ bounded and symmetric, $E \in \RR$, $\kappa \in \RR$, and
 \[
 \forall x \in \ran (P_{A + B} (E)) \colon \quad
 \langle x , B x \rangle \geq \kappa  \lVert x \rVert^2  .
 \]
 Then for all $k \in \NN \cup \{ \infty \}$ such that $\lambda_k(A + B) < E$, we have
 \[
  \lambda_k(A + B) \geq \lambda_k(A) + \kappa.
 \]
\end{lemma}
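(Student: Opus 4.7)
The plan is to use the Courant--Fischer min-max characterization of eigenvalues below the essential spectrum, namely
\[
\lambda_k(A) \;=\; \inf_{\substack{V \subset \cD(A) \\ \dim V = k}} \sup_{\substack{x \in V \\ \|x\|=1}} \langle x, A x\rangle,
\]
which holds for lower semibounded self-adjoint operators (with $\inf \sigma_\ess(A)$ returned whenever fewer than $k$ eigenvalues lie below). Since $B$ is bounded, $A$ and $A+B$ share the same domain, and the same formula holds for $A+B$.

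I would first treat finite $k$ such that $\lambda_k(A+B) < E$. Fix a small $\epsilon > 0$ with $\lambda_k(A+B) + \epsilon < E$ and consider the spectral subspace $W_\epsilon := \ran P_{A+B}(\lambda_k(A+B) + \epsilon)$. Regardless of whether $\lambda_k(A+B)$ is an eigenvalue or equal to $\inf \sigma_\ess(A+B)$, one checks that $\dim W_\epsilon \geq k$: in the first case this is the defining property of $\lambda_k$, in the second the essential spectrum forces $W_\epsilon$ to be infinite-dimensional. Pick any $k$-dimensional subspace $V_\epsilon \subset W_\epsilon$. Since $V_\epsilon \subset \ran P_{A+B}(E)$, the positivity hypothesis gives $\langle x, Bx\rangle \geq \kappa \|x\|^2$ for all $x \in V_\epsilon$, while the spectral theorem gives $\langle x, (A+B)x\rangle \leq (\lambda_k(A+B) + \epsilon) \|x\|^2$. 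Subtracting,
\[
 \langle x, Ax\rangle \;\leq\; \bigl(\lambda_k(A+B) + \epsilon - \kappa\bigr)\|x\|^2 \qquad \text{for all } x \in V_\epsilon.
\]
Plugging $V_\epsilon$ into the min-max for $A$ and then sending $\epsilon \to 0$ yields $\lambda_k(A) \leq \lambda_k(A+B) - \kappa$, which is the desired inequality.

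For $k = \infty$, note that $\lambda_\infty(A+B) = \sup_{k \in \NN} \lambda_k(A+B)$ and similarly for $A$. If $\lambda_\infty(A+B) < E$ then a fortiori $\lambda_k(A+B) < E$ for every finite $k$, so the estimate just established applies for all $k$. Taking the supremum over $k$ gives $\lambda_\infty(A+B) \geq \lambda_\infty(A) + \kappa$.

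The only real subtlety is the case where $\lambda_k(A+B)$ coincides with $\inf \sigma_\ess(A+B)$; there one must verify that the test subspace $W_\epsilon$ still has dimension at least $k$. This is the step I would flag as the main obstacle, though it is resolved essentially by the definition of essential spectrum (infinite-dimensional range of spectral projections onto neighborhoods of any point of $\sigma_\ess$). Everything else is routine spectral calculus together with boundedness of $B$, which ensures $\cD(A+B) = \cD(A)$ and lets us compare quadratic forms term by term.
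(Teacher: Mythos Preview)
Your proof is correct and follows essentially the same route as the paper: both use the spectral subspace $\ran P_{A+B}(\lambda_k(A+B)+\epsilon)$ as a test space in the Courant--Fischer min--max for $A$, exploiting the hypothesis on $B$ to strip off $\kappa$. The only cosmetic differences are that the paper begins by writing $\lambda_k(A+B)$ itself as an $\inf_\epsilon \sup$ and then splits off $\langle x,Bx\rangle$, whereas you bound $\langle x,Ax\rangle$ directly via the spectral upper bound on $\langle x,(A+B)x\rangle$; and you handle $k=\infty$ by a separate supremum argument rather than folding it into the same chain.
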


\begin{proof}
Let $\epsilon_0 = E - \lambda_k(A + B) > 0$. Then we have by assumption
 \begin{align*}
  \lambda_k(A + B)
  &=
  \inf_{0 < \epsilon \leq \epsilon_0}
  \sup_{\substack{x \in \ran P_{A + B} (\lambda_k (A + B) + \epsilon)\\ \lVert x \rVert = 1}}
  \left( \langle x, A x \rangle + \langle x, B x \rangle \right) \\[1ex]
  &\geq
  \inf_{0 < \epsilon \leq \epsilon_0}
  \sup_{\substack{x \in \ran P_{A + B} (\lambda_k (A + B) + \epsilon)\\ \lVert x \rVert = 1}}
  \langle x, A x \rangle + \kappa .
 \end{align*}
 Since $\dim \ran P_{A + B}( \lambda_k(A + B) + \epsilon) \geq k$ for $\epsilon > 0$, we have by the standard variational principle
  \begin{align*}
  \sup_{\substack{x \in \ran P_{A + B} (\lambda_k (A + B) + \epsilon)\\ \lVert x \rVert = 1}}
  \langle x, A x \rangle
  &=
  \sup_{\substack{\cL \subset
  \ran P_{A + B} (\lambda_k (A + B) + \epsilon)\\ \dim \cL= k}}
  \sup_{\substack{x \in \cL\\ \lVert x \rVert = 1}}
  \langle x, A x \rangle
   \\
  &\geq
  \inf_{\substack{\cL \subset
  \ran P_{A + B} (\lambda_k (A + B) + \epsilon)\\ \dim \cL= k}}
  \sup_{\substack{x \in \cL\\ \lVert x \rVert = 1}}
  \langle x, A x \rangle\\
  &\geq
  \inf_{\substack{\cL \subset
  \cD(A)\\ \dim \cL= k}}
  \sup_{\substack{x \in \cL\\ \lVert x \rVert = 1}}
  \langle x, A x \rangle
  = \lambda_k(A).
  \qedhere
  \end{align*}
\end{proof}

\subsection{Ordering from right to left in gaps}
\label{ssec:rightleft}

For a self-adjoint (not necessarily semi-bounded) operator $A$, and $\gamma \in \rho(A) \cap \RR$, we set $\lambda_{\infty , \gamma}^\leftarrow(A) = \sup \{ \lambda < \gamma \colon \lambda \in \sigma_\mathrm{ess}(A) \}$.
If there is no essential spectrum below $\gamma$ this means $\lambda_{\infty , \gamma}^\leftarrow(A)  = -\infty$,
otherwise $\lambda_{\infty , \gamma}^\leftarrow(A) $ is the right edge of the component of $\sigma_\mathrm{ess}(A)$ below $\gamma$.
Moreover, we denote by $\{ \lambda_{k , \gamma}^\leftarrow(A) \}_{k \in \NN}$ the sequence of eigenvalues of $A$ in $( \lambda_{\infty , \gamma}^\leftarrow(A), \gamma)$,
enumerated non-increasingly and counting multiplicities.
If there are infinitely many eigenvalues inside $(\lambda_{\infty , \gamma}^\leftarrow(A) , \gamma)$, then it follows that
$\lambda_{\infty , \gamma}^\leftarrow(A)= \inf_k \lambda_{k , \gamma}^\leftarrow(A)= \lim_k \lambda_{k , \gamma}^\leftarrow(A)   $.
If there are exactly $N \in \NN_0$ eigenvalues inside $(\lambda_{\infty , \gamma}^\leftarrow(A) , \gamma)$ we set $\lambda_{k,\gamma}^\leftarrow (A) = \lambda_{\infty , \gamma}^\leftarrow(A)$ for all $k \in \NN$ with $k > N$.
Moreover, we define
\[
 \mathcal{M}_\gamma^- = \{ \mathcal{M} \colon \mathcal{M} \ \text{is a maximal} \ (A-\gamma)\text{-non-positive subspace of} \ \cD (A) \} .
\]
Recall that a subspace $\cM \subset \cH$ is called \emph{$A$-non-positive} if $\langle x , Ax  \rangle \leq 0$ for all $x \in \cM$.
Such a $\cM$ is called \emph{maximal} if there is no $A$-non-positive subspace which properly contains $\cM$.
\par
For $\gamma \in \rho (A) \cap \RR$ we define
\begin{align*}
\dist^{\leftarrow} (\gamma , \sigma (A)) &= \dist (\gamma , \sigma (A) \cap (-\infty , \gamma]) =\gamma -\sup \left( \sigma(A) \cap (-\infty,\gamma]\right), \ \text{and} \\
\dist^{\rightarrow} (\gamma , \sigma (A)) &= \dist (\gamma , \sigma (A) \cap [\gamma , \infty)) = \inf (\sigma (A) \cap [\gamma , \infty)) - \gamma .
\end{align*}
Note that $\dist^{\leftarrow} (\gamma , \sigma (A)) = \infty$ if $\gamma < \inf \sigma (A)$, and that $\dist^{\rightarrow} (\gamma , \sigma (A)) = \infty$ if $\gamma > \sup \sigma (A)$.
\begin{lemma}
\label{lem:maximal_non-positive}
Let $A$ be self-adjoint, $\gamma \in \rho (A) \cap \RR$, $B$ bounded and symmetric, satisfying
\begin{enumerate}[(i)]
 \item
 $\lVert B \rVert < \frac{1}{2} \dist(\gamma, \sigma(A))$, or
 \item
 $0 \leq B$ and $\lVert B \rVert < \dist(\gamma, \sigma(A))$.
\end{enumerate}
Then $\ran P_{A + B}(\gamma) \cap \cD(A)$ is a maximal $(A - \gamma)$-non-positive subspace of $\cD(A)$.
\end{lemma}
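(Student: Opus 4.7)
My plan is to prove the two defining properties of maximal non-positivity separately, with the main work being a careful localization of $\sigma(A+B)$ near $\gamma$. Throughout I abbreviate $d_- = \dist^{\leftarrow}(\gamma, \sigma(A))$ and $d_+ = \dist^{\rightarrow}(\gamma, \sigma(A))$, so that $\dist(\gamma, \sigma(A)) = \min\{d_-, d_+\}$.

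First I would establish that $\gamma \in \rho(A+B)$ with a quantitative gap estimate $d_+' := \inf (\sigma(A+B) \cap (\gamma, \infty)) - \gamma$. In case (i) this is the standard Weyl-type bound $d_+' \geq d_+ - \lVert B \rVert$ (and analogously on the left), valid for arbitrary bounded self-adjoint perturbations, and the hypothesis $\lVert B \rVert < \frac{1}{2}\min\{d_-, d_+\}$ ensures a genuine gap on both sides. In case (ii) I would sharpen this to $d_+' \geq d_+$ via a Schur-complement argument on the block decomposition $\cH = \ran P_A(\gamma) \oplus \ran P_A^\perp(\gamma)$: for $\lambda \in (\gamma, \gamma + d_+)$, the upper-right diagonal block of $\lambda I - (A+B)$ is strictly negative (since $A_+ \geq \gamma + d_+$ and $B_{++} \geq 0$ by $B \geq 0$), the lower-left block is strictly positive (since $\lVert B \rVert < d_-$), and the Schur complement inherits strict definiteness, ruling out such $\lambda$ from $\sigma(A+B)$.

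For $(A-\gamma)$-non-positivity of $\ran P_{A+B}(\gamma) \cap \cD(A)$, I take $x$ in this subspace and use
\[
\langle x, (A-\gamma) x\rangle = \langle x, (A+B-\gamma) x\rangle - \langle x, B x\rangle,
\]
whose first summand is $\leq 0$ by the spectral theorem applied to $A+B$. Case (ii) is then immediate from $\langle x, Bx\rangle \geq 0$. In case (i), combining the gap bound $\langle x, (A+B-\gamma) x\rangle \leq -(d_- - \lVert B \rVert)\lVert x \rVert^2$ with $\lvert\langle x, Bx\rangle\rvert \leq \lVert B \rVert\,\lVert x \rVert^2$ yields $\langle x, (A-\gamma) x\rangle \leq -(d_- - 2\lVert B \rVert)\lVert x \rVert^2 \leq 0$, strict for $x \neq 0$ because $\lVert B \rVert < d_-/2$.

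For maximality, suppose for contradiction that an $(A-\gamma)$-non-positive subspace $\mathcal{N} \subset \cD(A)$ strictly contains $\ran P_{A+B}(\gamma) \cap \cD(A)$. Pick $y \in \mathcal{N}$ outside this smaller subspace and set $y' := y - P_{A+B}(\gamma) y$. Then $y'$ is a non-zero element of $\ran P_{A+B}^\perp(\gamma) \cap \cD(A)$ which still belongs to $\mathcal{N}$, since $P_{A+B}(\gamma)$ preserves $\cD(A+B) = \cD(A)$ and hence $P_{A+B}(\gamma) y \in \ran P_{A+B}(\gamma) \cap \cD(A) \subset \mathcal{N}$. The spectral estimate from the first stage gives $\langle y', (A+B-\gamma) y'\rangle \geq d_+' \lVert y' \rVert^2$, so
\[
\langle y', (A-\gamma) y'\rangle \geq (d_+' - \lVert B \rVert)\lVert y' \rVert^2,
\]
which is $(d_+ - 2\lVert B \rVert)\lVert y' \rVert^2 > 0$ in case (i) and $\geq (d_+ - \lVert B \rVert)\lVert y' \rVert^2 > 0$ in case (ii), contradicting $(A-\gamma)$-non-positivity of $\mathcal{N}$. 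The main obstacle I anticipate is the sharp bound $d_+' \geq d_+$ in case (ii), which uses $B \geq 0$ in an essential way and is precisely what permits the hypothesis on $\lVert B \rVert$ to be relaxed from $\tfrac{1}{2}\dist$ to $\dist$ in that case; the Schur-complement setup is routine but requires careful block bookkeeping when $A$ is not lower semi-bounded.
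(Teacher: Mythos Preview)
Your proof is correct and follows essentially the same two-step structure as the paper: verify $(A-\gamma)$-non-positivity of $\ran P_{A+B}(\gamma)\cap\cD(A)$ via the splitting $\langle x,(A-\gamma)x\rangle = \langle x,(A+B-\gamma)x\rangle - \langle x,Bx\rangle$, then for maximality produce a non-zero $(A-\gamma)$-non-positive vector in $\ran P_{A+B}^\perp(\gamma)\cap\cD(A)$ and derive a contradiction from the lower spectral bound on $A+B-\gamma$ there. Two points of comparison are worth noting. First, you spell out explicitly why such a vector in the orthogonal complement exists (subtracting $P_{A+B}(\gamma)y$ from $y\in\mathcal{N}$), whereas the paper simply asserts one can choose it; your justification is the correct one. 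Second, the key estimate $\dist^{\rightarrow}(\gamma,\sigma(A+B))\ge d_+$ in case~(ii), which you prove via a Schur-complement argument on the $P_A(\gamma)$-block decomposition, is used by the paper without proof---it is exactly the content of \cite[Proposition~2.1]{Seelmann-17-arxiv}, which the paper invokes elsewhere. So your version is more self-contained but not methodologically different. (Minor remark: your phrases ``upper-right diagonal block'' and ``lower-left block'' presumably mean the $(++)$ and $(--)$ diagonal blocks; the argument is clear from the accompanying justifications.)
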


\begin{proof}
 For all $x \in \ran P_{A+B} (\gamma) \cap \cD(A)$ with $\lVert x \rVert = 1$ we have
 \[
   \langle x , (A - \gamma ) x\rangle
   =
   \langle x , (A+B - \gamma ) x \rangle
   -
   \langle x , B x \rangle
   \leq
   - \dist^\leftarrow(\gamma, \sigma(A + B))
   - \langle x , B x \rangle.
 \]
 If (ii) is satisfied, this is clearly negative.
 If (i) is satisfied, we use
 $\dist^\leftarrow(\gamma, \sigma(A+B))
 \geq
 \dist(\gamma, \sigma(A)) - \lVert B \rVert$
 and conclude
 \[
  \langle x , (A - \gamma ) x\rangle
  \leq
  - \dist^\leftarrow(\gamma, \sigma(A)) + \lVert B \rVert - \langle x , B x \rangle
  \leq
  - \dist^\leftarrow(\gamma, \sigma(A)) + 2 \lVert B \rVert
  < 0.
 \]
 Hence, $\ran P_{A + B}(\gamma) \cap \cD(A)$ is an $(A - \gamma)$-non-positive subspace of $\cD(A)$.

Let us assume it is not maximal.
Then we can choose $x \in \ran P_{A+B} (\gamma)^\perp \cap \cD (A)$ satisfying $\lVert x \rVert = 1$ and $\langle x , (A - \gamma) x \rangle \leq 0$.
In case (i), we use $\lVert B \rVert < \dist(\gamma, \sigma(A))/2$, to obtain
\[
 \langle x , (A+B - \gamma ) x \rangle
 \geq \dist (\gamma , \sigma (A+B))
 \geq \dist (\gamma , \sigma (A)) - \lVert B \rVert
 \geq \frac{1}{2}\dist (\gamma , \sigma (A)) .
\]
This leads to the contradiction
\[
 0 \geq \langle x , (A-\gamma) x \rangle = \langle x , (A+B-\gamma) x \rangle - \langle x , B x \rangle \geq \frac{1}{2}\dist (\gamma , \sigma (A)) - \lVert B \rVert > 0 .
\]
In case (ii), we use
$
 \langle x , (A+B - \gamma ) x \rangle
 \geq \dist^\rightarrow (\gamma , \sigma (A)) ,
$ and find the contradiction
\[
 0 \geq \langle x , (A-\gamma) x \rangle = \langle x , (A+B-\gamma) x \rangle - \langle x , B x \rangle \geq \dist^\rightarrow (\gamma , \sigma (A)) - \lVert B \rVert > 0 .
\qedhere
\]
\end{proof}

The following theorem is a reformulation of Theorem~3.1 in \cite{LangerS-16}, obtained by replacing $T$ by $-T$, and by working with operator domains instead of quadratic form domains.
\begin{theorem} \label{thm:LangerS}
Let $A$ be self-adjoint.
For all $\gamma \in \rho (A) \cap \RR$ and $k \in \NN$ we have
\begin{equation}\label{eq:LS16}
 \lambda_{k , \gamma}^\leftarrow (a)
 =
 \inf_{\mathcal{M} \in \mathcal{M}_\gamma^-} \inf_{\substack{\cL \subset \mathcal{M} \\ \dim \mathcal L = k-1}} \sup_{\substack{x \in \mathcal{M} \\ x \perp \cL \\ \lVert x \rVert = 1}} \langle x , A x \rangle .
\end{equation}
\end{theorem}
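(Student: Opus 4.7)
The plan is to deduce Theorem~\ref{thm:LangerS} from Theorem~3.1 in \cite{LangerS-16} by combining a sign flip $A \mapsto -A$ with a passage from quadratic form domains to operator domains.

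First I would recall the Langer--Seelmann principle in its original form: for a self-adjoint operator $T$ with $\gamma' \in \rho(T) \cap \RR$ and $k \in \NN$, the $k$-th eigenvalue of $T$ lying to the \emph{right} of $\gamma'$ (enumerated increasingly) admits the characterization
\[
  \sup_{\cM' \in \cM^+_{\gamma',\mathrm{form}}(T)} \sup_{\substack{\cL \subset \cM' \\ \dim \cL = k-1}} \inf_{\substack{x \in \cM',\ x \perp \cL \\ \lVert x \rVert = 1}} \langle x, T x \rangle ,
\]
where the outer supremum runs over maximal $(T-\gamma')$-non-\emph{negative} subspaces of the quadratic form domain of $T$. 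Applying this with $T = -A$ and $\gamma' = -\gamma$, the identity $T - \gamma' = -(A - \gamma)$ turns $(T-\gamma')$-non-negativity into $(A-\gamma)$-non-positivity, while $\langle x, T x \rangle = -\langle x, A x \rangle$ converts the outer suprema into infima and the inner infimum into a supremum. Because $\sigma_\mathrm{p}(T) \cap (\gamma', \infty) = -\bigl(\sigma_\mathrm{p}(A) \cap (-\infty, \gamma)\bigr)$, the $k$-th eigenvalue of $T$ above $\gamma'$, counted increasingly, equals $-\lambda^\leftarrow_{k,\gamma}(A)$. Assembling these substitutions yields exactly the right hand side of \eqref{eq:LS16}, except that the outer infimum a priori ranges over maximal $(A-\gamma)$-non-positive subspaces of the quadratic form domain $\cD[A]$ rather than over $\cM_\gamma^-$.

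Second, I would reconcile the two domain conventions. Denote by $\cM^-_{\gamma, \mathrm{form}}$ the set of maximal $(A-\gamma)$-non-positive subspaces of $\cD[A]$. Since $\gamma \in \rho(A)$, the orthogonal decomposition $\cH = \ran P_A(\gamma) \oplus \ran P_A(\gamma)^\perp$ is spectral and $A - \gamma$ acts as a negative (resp.\ positive) definite operator on the two summands. Consequently every $\cM' \in \cM^-_{\gamma, \mathrm{form}}$ is the graph, inside $\cD[A]$, of a contraction $K \colon \ran P_A(\gamma) \to \ran P_A(\gamma)^\perp$ in a suitable scalar product derived from $|A-\gamma|$, and the same description parametrizes $\cM_\gamma^-$ after intersecting each such graph with $\cD(A)$. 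Spectral truncation $x \mapsto P_A([-n,n]) x$ approximates every $x \in \cM'$ by elements of $\cM' \cap \cD(A) \in \cM_\gamma^-$ in the form norm of $A$, and along this norm both $x \mapsto \langle x, A x \rangle$ and the orthogonality condition $x \perp \cL$ are continuous. Hence the inner supremum, the intermediate infimum over $(k-1)$-dimensional $\cL$, and the outer infimum all take the same value when restricted from $\cD[A]$ to $\cD(A)$, concluding the proof.

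The main obstacle will be the second step: the canonical identification of $\cM_\gamma^-$ with $\cM^-_{\gamma, \mathrm{form}}$ through the graph parametrization, and the verification that density of $\cD(A)$ inside each maximal non-positive subspace in the form norm suffices to preserve the value of the nested variational problem in \eqref{eq:LS16}. Once this passage is settled, the sign-flip reduction to \cite[Theorem~3.1]{LangerS-16} is purely algebraic.
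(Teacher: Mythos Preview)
Your proposal follows exactly the route the paper indicates: the paper does not give a proof of Theorem~\ref{thm:LangerS} but simply states that it ``is a reformulation of Theorem~3.1 in \cite{LangerS-16}, obtained by replacing $T$ by $-T$, and by working with operator domains instead of quadratic form domains.'' Your two steps --- the sign flip and the passage from form domains to operator domains --- are precisely these two ingredients, and you correctly identify the second as the nontrivial one.

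One caution on your sketch of the domain passage: the spectral truncation $x \mapsto P_A([-n,n])x$ will in general \emph{not} send an element of a maximal $(A-\gamma)$-non-positive subspace $\cM'$ back into $\cM'$, since $\cM'$ need not be invariant under the spectral projections of $A$ (only $\ran P_A(\gamma)$ itself has that property). So the approximation you want has to be set up more carefully --- e.g., by truncating in the graph-coordinate $y \in \ran P_A(\gamma)$ and then reapplying the graph map $K$, and checking that this still lands in $\cD(A)$ and approximates in form norm. You already flag this step as the main obstacle, so this is just a refinement of where the work lies rather than a change of strategy.
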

\begin{remark}
 If  $\gamma < \inf \sigma(A)$, then by definition we have $\lambda_{k , \gamma}^\leftarrow (A) = -\infty$. Since we also have $\mathcal{M}_\gamma^- = \{0\}$, the last supremum on the right hand side of Eq.~\eqref{eq:LS16} is taken over the empty set. Hence, the right hand side of Eq.~\eqref{eq:LS16} is as well minus infinity.
 \par
 In our theorems in Section~\ref{ssec:rightleft}, \ref{ssec:leftright} and \ref{sec:application}, we will be faced with such a situation from time to time, in particular, whenever we consider spectrum to the left of $\gamma$ with $\gamma < \inf \sigma (A)$, or spectrum to the right of $\gamma$ with $\gamma > \sup \sigma (A)$.
\end{remark}
With Lemma~\ref{lem:maximal_non-positive}  and the last theorem at disposal we are prepared to prove
\begin{theorem} \label{thm:gaps_left}
Let $A$ be self-adjoint, $\gamma \in \rho (A) \cap \RR$, $\kappa \in \RR$, $B$ bounded and symmetric, satisfying
\begin{enumerate}[(i)]
 \item
 $\lVert B \rVert < \frac{1}{2} \dist(\gamma, \sigma(A))$, or
 \item
 $0 \leq B$ and $\lVert B \rVert < \dist(\gamma, \sigma(A))$.
\end{enumerate}
Assume further
\[
 \forall x \in \ran (P_{A+B} (\gamma)) \colon \quad
 \langle x , B x \rangle \geq \kappa \lVert x \rVert^2 .
\]
Then for all $k \in \NN \cup \{\infty\}$, we have
 \[
\lambda_{k , \gamma}^\leftarrow  (A + B)
\geq
 \lambda_{k , \gamma}^\leftarrow  (A) + \kappa .
\]
\end{theorem}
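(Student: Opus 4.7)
I will invoke the variational characterization of Theorem~\ref{thm:LangerS} for both $A$ and $A + B$, tested against the common subspace $\cM_0 := \ran P_{A+B}(\gamma) \cap \cD(A)$. Under either hypothesis (i) or (ii), standard perturbation bounds yield $\gamma \in \rho(A + B)$, and since $B$ is bounded we have $\cD(A + B) = \cD(A)$, so that $\cM_0 \subset \cD(A + B)$. Lemma~\ref{lem:maximal_non-positive} furthermore places $\cM_0 \in \cM_\gamma^-(A)$, which is the crucial input enabling us to feed $\cM_0$ into Theorem~\ref{thm:LangerS} applied to $A$.

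The first step is to establish, for $k \in \NN$, the identity
\begin{equation*}
\lambda_{k,\gamma}^\leftarrow(A+B)
=
\inf_{\substack{\cL \subset \cM_0 \\ \dim \cL = k-1}}
\sup_{\substack{x \in \cM_0 \\ x \perp \cL, \, \lVert x\rVert = 1}}
\langle x, (A + B) x \rangle .
\end{equation*}
This is the classical Glazman max-min for isolated eigenvalues above the essential spectrum, applied to $-(A + B)$ restricted to its invariant subspace $\ran P_{A + B}(\gamma)$: on this subspace the restricted operator has exactly $\{\lambda_{k,\gamma}^\leftarrow(A+B)\}_{k}$ as the discrete eigenvalues above $\lambda_{\infty,\gamma}^\leftarrow(A+B)$, counted from the top.

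Next, the pointwise hypothesis $\langle x, Bx \rangle \geq \kappa \lVert x\rVert^2$ for every $x \in \cM_0 \subset \ran P_{A+B}(\gamma)$ gives $\langle x, (A+B)x \rangle \geq \langle x, Ax \rangle + \kappa \lVert x\rVert^2$. Pushing this inequality through the $\sup$ and then the $\inf$ in the displayed identity yields
\begin{equation*}
\lambda_{k,\gamma}^\leftarrow(A+B)
\geq
\inf_{\substack{\cL \subset \cM_0 \\ \dim \cL = k-1}}
\sup_{\substack{x \in \cM_0 \\ x \perp \cL, \, \lVert x\rVert = 1}}
\langle x, A x \rangle + \kappa
\geq
\lambda_{k,\gamma}^\leftarrow(A) + \kappa ,
\end{equation*}
where the last inequality is Theorem~\ref{thm:LangerS} applied to $A$ at the admissible choice $\cM = \cM_0 \in \cM_\gamma^-(A)$, using that a specific choice of $\cM$ in the outer infimum produces a value no smaller than $\lambda_{k,\gamma}^\leftarrow(A)$. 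The case $k = \infty$ follows from a straightforward limit, since $\lambda_{k,\gamma}^\leftarrow$ is non-increasing in $k$ with limit $\lambda_{\infty,\gamma}^\leftarrow$.

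I expect the main obstacle to be the first-step identity. The characterization in Theorem~\ref{thm:LangerS} is stated as an infimum over \emph{all} $\cM \in \cM_\gamma^-(A+B)$, so substituting one specific $\cM_0$ only produces the one-sided bound $\lambda_{k,\gamma}^\leftarrow(A+B) \leq F_{A+B}(\cM_0)$. Upgrading this to the equality used above requires separately arguing that the canonical spectral subspace $\ran P_{A+B}(\gamma) \cap \cD(A+B)$ attains the infimum -- either by invoking the proof of Theorem~\ref{thm:LangerS} in \cite{LangerS-16}, or by running a direct Glazman-type argument on the restriction $(A+B)|_{\ran P_{A+B}(\gamma)}$ (whose spectrum is precisely $\sigma(A+B) \cap (-\infty, \gamma)$, with the $\lambda_{k,\gamma}^\leftarrow(A+B)$ being the eigenvalues above its essential spectrum).
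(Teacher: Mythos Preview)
Your proposal is correct and follows essentially the same route as the paper: the paper also obtains the identity for $\lambda_{k,\gamma}^\leftarrow(A+B)$ by applying the standard variational principle to $-(A+B)|_{\ran P_{A+B}(\gamma)}$ (exactly the ``direct Glazman-type argument'' you identify as the resolution of your obstacle), then uses the hypothesis to peel off $\kappa$, invokes Lemma~\ref{lem:maximal_non-positive} to certify $\cM_0 \in \cM_\gamma^-$, and finishes via Theorem~\ref{thm:LangerS}. Your treatment of $k=\infty$ by taking the limit of the non-increasing sequence is likewise what the paper does.
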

\begin{proof}
First consider $k \in \NN$.
Since $\lVert B \rVert < \dist (\gamma , \sigma (A) )$ we have $\gamma \in \rho (A + B)$.
We apply the standard variational principle to $-(A + B)|_{\ran P_{A+B} (\gamma)}$ and obtain
\begin{align*}
 \lambda_{k , \gamma}^\leftarrow (A + B) &=
 \inf_{\substack{\cL \subset \ran P_{A+B} (\gamma) \cap \cD(A) \\ \dim \cL = k-1}}
 \sup_{\substack{x \in \ran P_{A+B} (\gamma) \cap \cD(A)\\ x \perp \cL \\ \lVert x \rVert = 1}} \langle x , (A+B) x \rangle  \\
  &\geq \inf_{\substack{\cL \subset \ran P_{A+B} (\gamma) \cap \cD(A) \\ \dim \cL = k-1}} \sup_{\substack{x \in \ran P_{A+B} (\gamma) \cap \cD(A) \\ x \perp \cL \\ \lVert x \rVert = 1}}
\langle x , A x \rangle + \kappa .
\end{align*}

By Lemma~\ref{lem:maximal_non-positive}, the subspace $\ran P_{A+B} (\gamma) \cap \cD(A)$ is a maximal $(A-\gamma)$-non-positive subspace of $\cD (A)$.
Hence,
\[
\lambda_{k , \gamma}^\leftarrow (A + B)
\geq
\inf_{\mathcal{M} \in \mathcal{M}_\gamma^-} \inf_{\substack{\cL \subset \mathcal{M} \\ \dim \cL = k-1}} \sup_{\substack{x \in \mathcal{M} \\ x \perp \cL \\ \lVert x \rVert = 1}}
\langle x , A x \rangle + \kappa .
\]
For $k \in \NN$, the statement of the theorem follows from Theorem~\ref{thm:LangerS}.
\par
If there are infinitely many eigenvalues $ \lambda_{k , \gamma}^\leftarrow (A+B)$ in $( \lambda_{\infty , \gamma}^\leftarrow (A+B),\gamma)$ then
\[
\lambda_{\infty , \gamma}^\leftarrow (A+B)
= \inf_{k \in \NN} \lambda_{k , \gamma}^\leftarrow (A+B)
\geq
 \inf_{k \in \NN} \lambda_{k , \gamma}^\leftarrow (A) +\kappa
=
\lambda_{\infty , \gamma}^\leftarrow (A)+\kappa. \qedhere
\]
\end{proof}
We can also require a lower bound on $B$ in terms of another operator, rather than a scalar.
Slight modifications of the last proof yield the following
\begin{corollary}\label{thm:gaps_left-gen}
Let  $A$ be self-adjoint, $B,C$ bounded and symmetric, satisfying $\gamma \in \rho (A+C) \cap \RR$,
 $\langle x , B x \rangle \geq  \langle x , C x \rangle$ for all $x \in \ran (P_{A+B} (\gamma)) $
and
\begin{enumerate}[(i)]
 \item
 $\lVert B-C \rVert < \frac{1}{2} \dist(\gamma, \sigma(A+C))$, or
 \item
 $0 \leq B-C$ and $\lVert B-C \rVert < \dist(\gamma, \sigma(A+C))$.
\end{enumerate}
Then for all $k \in \NN \cup \{\infty\}$, we have
 \[
\lambda_{k , \gamma}^\leftarrow  (A + B)
\geq
 \lambda_{k , \gamma}^\leftarrow  (A+C) .	
\]
\end{corollary}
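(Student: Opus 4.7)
The plan is to reduce Corollary \ref{thm:gaps_left-gen} to Theorem \ref{thm:gaps_left} by treating $A+C$ as the new unperturbed operator and $B-C$ as the new perturbation, with the scalar constant $\kappa$ replaced by $0$. Under this substitution, the assumption $\langle x, Bx\rangle \geq \langle x, Cx\rangle$ on $\ran P_{A+B}(\gamma)$ becomes $\langle x,(B-C)x\rangle\geq 0$ on $\ran P_{(A+C)+(B-C)}(\gamma)$, which is exactly the scalar hypothesis with $\kappa=0$. Hypotheses (i) and (ii) of the corollary translate into hypotheses (i) and (ii) of Lemma \ref{lem:maximal_non-positive} applied to the shifted operator. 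The strategy is therefore to run the proof of Theorem \ref{thm:gaps_left} \emph{verbatim}, with these replacements.

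First I would verify that $\gamma \in \rho(A+B)$. In case (i) this is immediate from $\|B-C\| < \tfrac{1}{2}\dist(\gamma,\sigma(A+C))$. In case (ii), $0\le B-C < \dist(\gamma,\sigma(A+C))$ implies $\|B-C\| = \sup \sigma(B-C) < \dist(\gamma,\sigma(A+C))$, since $B-C$ is bounded and non-negative. In both cases, a standard perturbation argument then gives $\gamma \in \rho(A+B)$.

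For $k\in\NN$, I would apply the standard variational principle to $-(A+B)$ restricted to $\ran P_{A+B}(\gamma)$, obtaining
\[
\lambda_{k,\gamma}^\leftarrow(A+B)
= \inf_{\substack{\cL \subset \ran P_{A+B}(\gamma)\cap \cD(A) \\ \dim \cL = k-1}}
\sup_{\substack{x \in \ran P_{A+B}(\gamma)\cap \cD(A) \\ x \perp \cL,\ \|x\|=1}}
\langle x,(A+B)x\rangle.
\]
Splitting $\langle x,(A+B)x\rangle = \langle x,(A+C)x\rangle + \langle x,(B-C)x\rangle$ and discarding the second summand, which is non-negative on $\ran P_{A+B}(\gamma)$ by assumption, produces a lower bound involving only $A+C$. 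Next, Lemma \ref{lem:maximal_non-positive} applied to the operator $A+C$ and the perturbation $B-C$ yields that $\ran P_{A+B}(\gamma)\cap\cD(A)$ is a maximal $((A+C)-\gamma)$-non-positive subspace of $\cD(A+C)=\cD(A)$. Enlarging the outer infimum to range over all $\cM\in \cM_\gamma^-$ (for $A+C$) and invoking Theorem \ref{thm:LangerS} for the operator $A+C$ identifies the resulting variational expression with $\lambda_{k,\gamma}^\leftarrow(A+C)$, giving the claimed inequality.

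The case $k=\infty$ follows by the same limiting argument as in Theorem \ref{thm:gaps_left}: if $A+B$ has infinitely many eigenvalues in $(\lambda_{\infty,\gamma}^\leftarrow(A+B),\gamma)$, then
$\lambda_{\infty,\gamma}^\leftarrow(A+B) = \inf_{k\in\NN} \lambda_{k,\gamma}^\leftarrow(A+B) \geq \inf_{k\in\NN} \lambda_{k,\gamma}^\leftarrow(A+C) = \lambda_{\infty,\gamma}^\leftarrow(A+C)$. There is no real conceptual obstacle here, since the entire machinery from the proof of Theorem \ref{thm:gaps_left} carries over under the substitution $(A,B)\mapsto (A+C,B-C)$; the only slightly delicate point is the verification that hypothesis (ii) on the operator inequality is strong enough to guarantee $\gamma\in\rho(A+B)$, which is where non-negativity of $B-C$ enters to identify the operator norm with the spectral supremum.
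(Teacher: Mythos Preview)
Your proposal is correct and matches the paper's intended argument exactly: the paper itself gives no separate proof, stating only that ``slight modifications of the last proof yield'' the corollary, and the modifications you spell out---substituting $A\mapsto A+C$, $B\mapsto B-C$, $\kappa\mapsto 0$, then rerunning the variational argument via Lemma~\ref{lem:maximal_non-positive} and Theorem~\ref{thm:LangerS}---are precisely those modifications.
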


In the case of a non-negative perturbation, one might ask whether 
lifting estimates as in Theorem \ref{thm:gaps_left}
are valid as long as the norm of $B\geq0$ is smaller than the distance
between the reference point $\gamma \in \rho(A) \cap \RR$ and the closest spectral value below $\gamma$. The following theorem gives a positive answer to this question, however, under a stronger assumption on the positivity of $B$.
Combining Lemma~\ref{lem:maximal_non-positive} (ii) and Theorem~\ref{thm:LangerS} we are able to prove
\begin{theorem} \label{thm:gaps_left_opt}
Let $A$ be self-adjoint, $\gamma \in \rho (A) \cap \RR$,
$\kappa \in \RR$, $B$ bounded and symmetric satisfying $0 \leq B$ and $\lVert B \rVert < \dist^{\leftarrow} (\gamma , \sigma (A))$.
Let $n$ be the smallest integer larger than $\lVert B \rVert / \dist^\rightarrow (\gamma , \sigma (A))$,
and assume that
\[
 \forall x \in \bigcup_{j=1}^n\ran (P_{A+ j B/n} (\gamma)) \colon \quad
 \langle x , B x \rangle \geq \kappa  \lVert x \rVert^2  .
\]
Then for all $k \in \NN \cup \{\infty\}$, we have
 \[
\lambda_{k , \gamma}^\leftarrow  (A + B)
\geq
 \lambda_{k , \gamma}^\leftarrow  (A) + \kappa .
\]
\end{theorem}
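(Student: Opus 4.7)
The plan is to reduce Theorem~\ref{thm:gaps_left_opt} to an iterated application of Theorem~\ref{thm:gaps_left}(ii). Theorem~\ref{thm:gaps_left}(ii) cannot be applied to $B$ in one shot because $\lVert B\rVert$ is only controlled by $\dist^\leftarrow(\gamma, \sigma(A))$ and may exceed $\dist^\rightarrow(\gamma, \sigma(A))$, whereas Theorem~\ref{thm:gaps_left}(ii) requires $\lVert B\rVert < \dist(\gamma, \sigma(A)) = \min\{\dist^\leftarrow(\gamma, \sigma(A)), \dist^\rightarrow(\gamma, \sigma(A))\}$. The definition of $n$ is tailored so that $\lVert B/n\rVert < \dist^\rightarrow(\gamma, \sigma(A))$, making the step size $B/n$ admissible for a one-step bound.

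I would set $A_j := A + (j/n) B$ for $j = 0, 1, \dots, n$, so that $A_0 = A$ and $A_n = A + B$, and apply Theorem~\ref{thm:gaps_left}(ii) successively to the pair $(A_{j-1}, B/n)$ for $j = 1, \dots, n$. Its positivity hypothesis $\langle x, (B/n) x\rangle \geq (\kappa/n) \lVert x\rVert^2$ for all $x \in \ran P_{A_j}(\gamma)$ is a rescaled version of the assumption of the theorem and is therefore immediate. The remaining hypotheses require $\gamma \in \rho(A_{j-1})$ and $0 \leq B/n < \dist(\gamma, \sigma(A_{j-1}))$, which I would verify inductively in~$j$.

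For the left distance, the standard Weyl perturbation bound yields $\dist^\leftarrow(\gamma, \sigma(A_{j-1})) \geq \dist^\leftarrow(\gamma, \sigma(A)) - (j-1) \lVert B/n\rVert > \lVert B/n\rVert$, where the final inequality uses $j \leq n$ together with $\lVert B\rVert < \dist^\leftarrow(\gamma, \sigma(A))$. The main obstacle is the right distance: I would show that $\dist^\rightarrow(\gamma, \sigma(A + tB)) \geq \dist^\rightarrow(\gamma, \sigma(A))$ for all $t \in [0,1]$, whence $\dist^\rightarrow(\gamma, \sigma(A_{j-1})) \geq \dist^\rightarrow(\gamma, \sigma(A)) > \lVert B/n\rVert$ by the defining property of $n$. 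This monotonicity reflects the fact that $A \leq A + tB$ forces the spectrum above $\gamma$ to move only to the right; it can be justified by a continuity-in-$t$ argument that combines the Feynman-Hellmann-type monotonicity of isolated eigenvalues under non-negative perturbations with a full-spectrum variant of Lemma~\ref{lem:lemma_albrecht_s} (as in Proposition~2.1 of \cite{Seelmann-17-arxiv}) for the essential spectrum. Once these hypotheses are in place at every step, the $n$ inequalities $\lambda_{k,\gamma}^\leftarrow(A_j) \geq \lambda_{k,\gamma}^\leftarrow(A_{j-1}) + \kappa/n$ produced by Theorem~\ref{thm:gaps_left}(ii) sum to the desired bound $\lambda_{k,\gamma}^\leftarrow(A + B) \geq \lambda_{k,\gamma}^\leftarrow(A) + \kappa$.
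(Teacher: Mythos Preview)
Your proposal is correct and follows essentially the same route as the paper: both reduce to an $n$-fold iteration of Theorem~\ref{thm:gaps_left}(ii) with step $B/n$ and $\kappa$ replaced by $\kappa/n$, the key point being that $\gamma$ remains in the resolvent set with $\lVert B/n\rVert<\dist(\gamma,\sigma(A+jB/n))$ at every stage. The paper simply cites \cite[Proposition~2.1]{Seelmann-17-arxiv} for this last fact, whereas you spell out the left- and right-distance estimates separately; the arguments are otherwise identical.
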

\begin{proof}
First assume additionally that $\lVert B \rVert < \dist^{\rightarrow} (\gamma , \sigma (A))$. Then we have $0 \leq B$ and $\lVert B \rVert < \dist (\gamma , \sigma (A))$, and the statement follows from Theorem~\ref{thm:gaps_left}.
\par
Now we drop the assumption $\lVert B \rVert < \dist^{\rightarrow} (\gamma , \sigma (A))$, and consider the case $k \in \NN \cup \{\infty\}$, $0 \leq B$, and $\lVert B \rVert < \dist^{\leftarrow} (\gamma , \sigma (A))$.
Recall that $n \in \NN$ satisfies $0 \leq B / n$ and $\lVert B / n \rVert < \dist^{\rightarrow} (\gamma , \sigma (A))$.
It follows that
\[
 0 \leq B/n \ \text{and} \ \lVert B/n \rVert < \dist(\gamma, \sigma(A + j B/n))
 \quad
 \text{for}\
 j \in \{0, \ldots, n-1\},
\]
see~\cite[Proposition~2.1]{Seelmann-17-arxiv}.
We now apply the result obtained above iteratively for $j \in \{0,\ldots, n-1\}$ to the operator $A + j B / n$ instead of $A$, with the perturbation $B$ replaced by $B / n$, and with $\kappa$ replaced by $\kappa/n$.
\end{proof}
If we are interested in a lifting estimate for edges of $\sigma_{\ess}(A)$
the location of eigenvalues within the gap of $\sigma_{\ess}(A)$ should be irrelevant. Theorem~\ref{thm:essential_left} below makes this precise.

\begin{theorem}
\label{thm:essential_left}
 Let $A$ be self-adjoint, $(a,b) \cap \sigma_\ess(A) = \emptyset$, $a \in \sigma_\ess(A)$, $\kappa \geq 0$, and $B$ bounded and symmetric satisfying $0 \leq B$ and $\lVert B \rVert < b-a$.
 Assume that
\[
 \forall x \in \bigcup_{t \in [0,1]} \ran P_{A + t B} (b) \colon \quad
 \langle x , B x \rangle \geq \kappa  \lVert x \rVert^2  .
\]
 Then
 \[
  [a + \kappa, b) \cap \sigma_\ess(A + B) \neq \emptyset.
 \]
\end{theorem}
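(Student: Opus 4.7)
The plan is to reduce the claim to the lower Lipschitz estimate $f(1) \geq a + \kappa$, where
\[
f\colon [0,1]\to\RR, \qquad f(t) := \sup\bigl(\sigma_{\mathrm{ess}}(A + tB) \cap (-\infty, b)\bigr).
\]
By Lemma~\ref{lem:lipschitz_left_edge}, $f$ is well defined, Lipschitz continuous with constant $\lVert B\rVert$, satisfies $f(0)=a$, $f(t)\in\sigma_{\mathrm{ess}}(A+tB)$, and $(f(t),b)\cap\sigma_{\mathrm{ess}}(A+tB)=\emptyset$ for every $t\in[0,1]$. Once $f(1)\geq a+\kappa$ is shown, the element $f(1)$ itself witnesses $\sigma_{\mathrm{ess}}(A+B)\cap[a+\kappa,b)\neq\emptyset$. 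The case $\kappa=0$ is already contained in Lemma~\ref{lem:lemma_albrecht_s}, so I assume $\kappa>0$.

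The core step is the following local lower bound: for every $t_0\in[0,1)$ there exists $\epsilon_0(t_0)>0$ with $t_0+\epsilon_0(t_0)\leq 1$ such that $f(t_0+\epsilon)\geq f(t_0)+\epsilon\kappa$ for all $\epsilon\in[0,\epsilon_0(t_0)]$. To establish it, observe that $\sigma(A+t_0 B)$ consists in $(f(t_0),b)$ only of isolated eigenvalues of finite multiplicity which can accumulate only at $f(t_0)$ and at $b$; in particular, every compact subinterval of $(f(t_0),b)$ contains only finitely many of them. Fixing such a subinterval $I$, for $\epsilon$ small enough I choose a reference level $\gamma\in I$ lying at distance $>\epsilon\lVert B\rVert$ from the finitely many eigenvalues in $I$, securing both $\gamma\in\rho(A+t_0 B)$ and $\epsilon\lVert B\rVert<\dist^{\leftarrow}(\gamma,\sigma(A+t_0 B))$. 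The positivity hypothesis required by Theorem~\ref{thm:gaps_left_opt} for the perturbation $\epsilon B$ with constant $\epsilon\kappa$ is inherited from the standing assumption, because for each $j$ the projection $P_{A+t_0 B+j\epsilon B/n}(\gamma)$ equals $P_{A+sB}(\gamma)$ with $s=t_0+j\epsilon/n\in[0,1]$, and $\gamma<b$ gives $\ran P_{A+sB}(\gamma)\subset\ran P_{A+sB}(b)$. Theorem~\ref{thm:gaps_left_opt} applied at $k=\infty$ then yields $\lambda_{\infty,\gamma}^{\leftarrow}(A+(t_0+\epsilon)B)\geq\lambda_{\infty,\gamma}^{\leftarrow}(A+t_0 B)+\epsilon\kappa=f(t_0)+\epsilon\kappa$; combined with the trivial estimate $\lambda_{\infty,\gamma}^{\leftarrow}(A+(t_0+\epsilon)B)\leq f(t_0+\epsilon)$ (which holds because $\gamma<b$), this proves the local bound.

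To globalize, set $g(t):=f(t)-a-t\kappa$, which is continuous with $g(0)=0$, and for which the local step reads $g(t_0+\epsilon)\geq g(t_0)$ on a right-neighborhood of every $t_0\in[0,1)$. Assume, for contradiction, $g(1)<0$, and let $t^{\ast}:=\inf\{t\in[0,1]\colon g(t)<0\}$. By continuity $g(t^{\ast})=0$ and $t^{\ast}<1$, but then the local step at $t^{\ast}$ forces $g\geq 0$ on $[t^{\ast},t^{\ast}+\epsilon_0(t^{\ast})]$, contradicting the definition of $t^{\ast}$. Hence $g(1)\geq 0$, i.e.\ $f(1)\geq a+\kappa$.

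The main obstacle is that $A+t_0 B$ may possess infinitely many isolated eigenvalues in the gap $(f(t_0),b)$ accumulating at $f(t_0)$, which rules out using a single reference level $\gamma$ uniformly in $t_0$ or $\epsilon$; the remedy is to select $\gamma$ in a $t_0$- and $\epsilon$-dependent manner inside a compact subinterval of $(f(t_0),b)$ bounded away from both endpoints, and then to invoke the continuity argument for $g$ to aggregate the infinitesimal lifting steps provided by Theorem~\ref{thm:gaps_left_opt} into the global Lipschitz lower bound $f(1)-f(0)\geq\kappa$.
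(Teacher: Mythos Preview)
Your argument is correct in substance and rests on the same key ingredient as the paper, namely the incremental application of Theorem~\ref{thm:gaps_left_opt}. The difference is organizational: the paper constructs explicit sequences of reference levels $(\gamma_n)_n$ lying in a fixed ``safe zone'' $(b-\epsilon,b)$ with $\epsilon=b-a-\lVert B\rVert$ (which is free of essential spectrum for all $A+tB$, $t\in[0,1]$) together with coupling constants $(s_n)_n$, moving $\gamma_n$ down inside the safe zone whenever discrete spectrum of $A+s_nB$ blocks the next step, and then argues $s_n\to 1$ by a counting contradiction. You instead package the iteration into a local-to-global continuity argument for the edge function $f$. Both routes confront the same obstacle (discrete eigenvalues possibly accumulating at the moving edge) and resolve it by an adaptive choice of the reference level; your organization is arguably cleaner, while the paper's is more constructive.

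One small imprecision in your local step: requiring $\gamma\in I$ to lie at distance greater than $\epsilon\lVert B\rVert$ from the finitely many eigenvalues \emph{in} $I$ does not by itself secure $\dist^\leftarrow(\gamma,\sigma(A+t_0B))>\epsilon\lVert B\rVert$, because eigenvalues just below the left endpoint of $I$ could still be close to $\gamma$. The straightforward fix is to also impose $\gamma-\epsilon\lVert B\rVert\in I$, or, even simpler, to fix once and for all any $\gamma\in(f(t_0),b)\cap\rho(A+t_0B)$ (which exists since the discrete spectrum in $(f(t_0),b)$ is countable) and set $\epsilon_0(t_0)=\min\bigl\{\dist^\leftarrow(\gamma,\sigma(A+t_0B))/(2\lVert B\rVert),\,1-t_0\bigr\}>0$.
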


\begin{proof}
 Define $\epsilon = b - a - \lVert B \rVert > 0$.
 We define a sequence of disjoint intervals
 \[
  I_k =
  \left( b- \epsilon + \frac{\epsilon}{2^k}, b - \epsilon + \frac{3 \epsilon}{2^{k+1}} \right) ,
  \quad
  k \in \NN.
 \]
 Note that by \eqref{eq:seelmann_1} in Lemma~\ref{lem:lemma_albrecht_s}, for all $t \in [0,1]$ we have $\sigma_\ess(A + tB) \cap (b- \epsilon, b) = \emptyset$.
 \par
 Choose $\gamma_1 \in \rho(A) \cap I_1$ and $s_1 = \min \{ \dist^{\leftarrow} (\gamma_1, \sigma(A)) / \lVert B \rVert, 1 \}$.
 We will now recursively define sequences
 \[
   (\gamma_n)_{n \in \NN} \subset \bigcup_{k \in \NN} I_k
   \quad
   \text{and}
   \quad
   (s_n)_{n \in \NN}
   \subset [0,1].
 \]
 For that purpose, we will denote by $k_n \in \NN$, the (unique) index such that $\gamma_n \in I_{k_n}$.
 If $s_n = 1$, we set $s_m = 1$ and $\gamma_{m} = \gamma_n$ for all $m > n$.
 Else, given $n \in \NN$, $\gamma_n$ and $s_n < 1$, we choose
 \[
  \begin{cases}
   \gamma_{n+1}
   = \gamma_n
   \quad
   &\text{if}
   \quad
   \sigma(A + s_n B) \cap [\sup I_{k_n + 1}, \gamma_n) = \emptyset,\\
   \gamma_{n+1}
   \in I_{k_n + 1} \cap \rho(A + s_n B)
   \quad
   &\text{else},
  \end{cases}
 \]
and set $s_{n+1} = \min \{ s_n + \dist^\leftarrow(\gamma_{n+1}, \sigma(A + s_n B)) / \lVert B \rVert, 1 \}$.
 The sequence $(s_n)_{n \in \NN}$ is monotone increasing and bounded by $1$.

 Assume that $\lim_n s_n = s < 1$.
 If there is $n_0 \in \NN$ such that $\gamma_n = \gamma_{n_0}$ for all $n \geq n_0$, then for all $n \in \NN$, $s_{n+1} - s_n$ is bounded from below by the distance between $I_{n_0}$ and $I_{n_0 + 1}$ which is a fixed positive number for all $n \geq n_0$.
 Hence, the sequence $(\gamma_n)_{n \in \NN}$ cannot become stationary.
 This implies that $A$ has infinitely many eigenvalues in $(b - \epsilon - \lVert B \rVert s, \gamma_1)$.
 This is a contradiction, since $\sigma_\ess(A) \cap [b - \epsilon - \lVert B \rVert s, \gamma_1] = \emptyset$.
 This shows $\lim_n s_n = 1$.

 By Lemma~\ref{lem:lemma_albrecht_s}, we have $\lambda_{\infty, \gamma}^\leftarrow(A + t B) = \lambda_{\infty, \tilde \gamma}^\leftarrow(A + t B)$ for all $t \in [0,1]$ and $\gamma, \tilde \gamma \in \rho(A + t B) \cap (b- \epsilon, b)$.
Given $n \in \NN$, we apply Theorem~\ref{thm:gaps_left_opt} with $\gamma = \gamma_n$, $A$ replaced by $A + s_n B$ and $B$ replaced by $(s_{n+1} - s_n) B$ and obtain
\begin{align*}
 \lambda^\leftarrow_{\infty, \gamma_n}(A + s_n B)
 &=
 \lambda^\leftarrow_{\infty, \gamma_n}(A + s_{n-1} B + (s_n - s_{n-1}) B)\\
 &\geq
 \lambda^\leftarrow_{\infty, \gamma_n}(A + s_{n-1} B) + (s_n - s_{n-1}) \kappa\\
 &=
 \lambda^\leftarrow_{\infty, \gamma_{n-1}}(A + s_{n-1} B) + (s_n - s_{n-1}) \kappa.
\end{align*}
Iteratively, we obtain for all $n \in \NN$ that $\lambda^\leftarrow_{\infty, \gamma_n}(A + s_n B) \geq \lambda^\leftarrow_{\infty, \gamma_1}(A) + s_n \kappa = a + s_n \kappa$,
which implies
 \[
  [a + s_n \kappa, a + s_n \lVert B \rVert] \cap \sigma_\ess(A + s_n B) \neq \emptyset.
 \]
 Using \eqref{eq:seelmann_2} from Lemma~\ref{lem:lemma_albrecht_s} and $0 \leq (1 - s_n) B \leq B$ this yields in particular
\[
  (a - \delta + s_n \kappa, a + \lVert B \rVert + \delta) \cap \sigma_\ess(A + B) \neq \emptyset,
  \quad
  \text{for all $\delta > 0$}
\]
and since $\sigma_\ess(A + B)$ is closed, we find
 \[
  [a + s_n \kappa, b) \cap \sigma_\ess(A + B) \neq \emptyset.
 \]
 The statement follows by taking the supremum over $n \in \NN$.
\end{proof}
Finally, combining the last theorem with Lemma~\ref{lem:lipschitz_left_edge},
we are able to give two sided Lipschitz estimates on the movement of upper edges of the essential spectrum.
\begin{corollary}
\label{cor:lifting_upper_edge}
Let $A$ be self-adjoint, $B \not = 0$ bounded, symmetric and non--negative,
$a\in \sigma_{\mathrm{ess}}(A)$,  $b > a$ such that $(a,b)\cap \sigma_{\mathrm{ess}}(A) = \emptyset$,
and  $t_0 = (b-a)/\lVert B \rVert$. Assume that
\[
 \forall x \in \bigcup_{t \in [0,1]} \ran P_{A + t B} (b) \colon \quad
 \langle x , B x \rangle \geq \kappa  \lVert x \rVert^2  .
\]
Then $f(t) = \sup \left( \sigma_{\mathrm{ess}}(A + t B) \cap (-\infty, b) \right) $
satisfies
\[
 \kappa \epsilon \leq f(t+\epsilon)- f(t) \leq \Vert B\Vert \epsilon \quad \text{ for all } t \in [0,t_0) \text{ and } \epsilon  \in [0,t_0-t).
\]
\end{corollary}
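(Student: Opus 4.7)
The plan is to derive both bounds from results already established: the upper bound is immediate from Lemma~\ref{lem:lipschitz_left_edge}, and the lower bound is obtained by applying Theorem~\ref{thm:essential_left} to the shifted operator $A + tB$ with perturbation $\epsilon B$.

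For the upper bound, I would observe that for $t \geq 0$ and $t + \epsilon < t_0$ we have $(t+\epsilon)_- = t_- = 0$, so the function $f$ considered here agrees with the one in Lemma~\ref{lem:lipschitz_left_edge}. That lemma directly yields $f(t + \epsilon) - f(t) \leq \lVert B \rVert \epsilon$.

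For the lower bound, fix $t \in [0, t_0)$ and $\epsilon \in [0, t_0 - t)$ and set $A' = A + tB$, $a' = f(t)$, $b' = b$. By Lemma~\ref{lem:lipschitz_left_edge}(b)--(c), $a' \in \sigma_\ess(A')$ and $(a', b') \cap \sigma_\ess(A') = \emptyset$. Combining the already-established upper Lipschitz bound with the definition of $t_0$ gives
\[
b' - a' \geq (b - a) - t \lVert B \rVert = (t_0 - t) \lVert B \rVert > \epsilon \lVert B \rVert,
\]
so the non-negative perturbation $\epsilon B$ satisfies $0 \leq \epsilon B < b' - a'$. The Corollary's positivity assumption translates to the positivity hypothesis of Theorem~\ref{thm:essential_left} applied to $A'$ with perturbation $\epsilon B$ and constant $\epsilon \kappa$, since for every $x$ in
\[
\bigcup_{s \in [0,1]} \ran P_{A' + s(\epsilon B)}(b') = \bigcup_{s \in [0,1]} \ran P_{A + (t + s \epsilon) B}(b)
\]
we have $\langle x, \epsilon B x \rangle = \epsilon \langle x, Bx \rangle \geq \epsilon \kappa \lVert x \rVert^2$. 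Theorem~\ref{thm:essential_left} then yields $[a' + \epsilon \kappa, b') \cap \sigma_\ess(A + (t+\epsilon)B) \neq \emptyset$, and taking the supremum gives $f(t + \epsilon) \geq f(t) + \epsilon \kappa$.

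The only non-routine step is verifying that $\epsilon B$ stays strictly below the gap width $b - f(t)$ of the shifted operator, which is precisely where the constraint $\epsilon < t_0 - t$ enters through the already-known upper Lipschitz bound. Everything else is a matter of correctly translating the hypotheses of Theorem~\ref{thm:essential_left} under the shift $A \mapsto A + tB$ and rescaling $B \mapsto \epsilon B$.
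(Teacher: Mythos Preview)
Your argument is correct and matches the paper's approach exactly: the upper bound comes from Lemma~\ref{lem:lipschitz_left_edge}, and the lower bound from applying Theorem~\ref{thm:essential_left} to $A+tB$ with perturbation $\epsilon B$. The only wrinkle---already present in the paper's statement---is that the positivity hypothesis is phrased as a union over $t\in[0,1]$ while the conclusion ranges over $t+\epsilon\in[0,t_0)$; your translation step tacitly needs $t+s\epsilon\in[0,1]$ (equivalently, the hypothesis should read $\bigcup_{t\in[0,t_0)}$), but this is a formulation issue in the Corollary rather than a flaw in your reasoning.
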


\subsection{Ordering from left to right in gaps}
\label{ssec:leftright}

Let $A$ be self-adjoint.
For $\gamma \in \rho(A) \cap \RR$ we set $\lambda_{\infty , \gamma}^\rightarrow(A) = \inf \{ \lambda > \gamma \colon \lambda \in \sigma_\mathrm{ess}(A) \}$.
If there is no essential spectrum above $\gamma$ this means $\lambda_{\infty , \gamma}^\rightarrow(A)  = \infty$,
otherwise $\lambda_{\infty , \gamma}^\rightarrow(A) $ is the lower edge of the component of $\sigma_\mathrm{ess}(A)$ above $\gamma$.
Moreover, we denote by $\{ \lambda_{k , \gamma}^\rightarrow(A) \}_{k \in \NN}$ the sequence of eigenvalues of $A$ in $(\gamma, \lambda_{\infty , \gamma}^\rightarrow(A))$,
enumerated non-decreasingly and counting multiplicities.
If there are infinitely many eigenvalues inside $(\gamma, \lambda_{\infty , \gamma}^\rightarrow(A))$ it follows that
$\lambda_{\infty , \gamma}^\rightarrow(A)= \sup_k \lambda_{k , \gamma}^\rightarrow(A)= \lim_k \lambda_{k , \gamma}^\rightarrow(A)$.
If there are exactly $N \in \NN_0$ eigenvalues inside $(\gamma, \lambda_{\infty , \gamma}^\rightarrow(A))$ then we set $\lambda_{k,\gamma}^\rightarrow (A) = \lambda_{\infty , \gamma}^\rightarrow(A)$ for all $k \in \NN$ with $k > N$.

The two main results of this subsection are the following theorems,
dealing with indefinite and non-negative perturbations, respectively.

\begin{theorem} \label{thm:gaps_right}
Let $A$ be self-adjoint, $B$ bounded and symmetric, $\gamma \in \rho (A) \cap \RR$, $\lVert B \rVert \leq \dist (\gamma , \sigma (A))/2$,
$\kappa \in \RR$, $E > \gamma$, and
\[
 \forall x \in \ran (P_{A+B} (E)) \colon \quad
 \langle x , B x \rangle \geq \kappa \lVert x \rVert^2 .
\]
Then for all $k \in \NN \cup \{\infty\}$ with $\lambda_{k , \gamma}^\rightarrow  (A + B) < E$ we have
 \[
\lambda_{k , \gamma}^\rightarrow  (A + B)
\geq
 \lambda_{k , \gamma}^\rightarrow  (A) + \kappa .
\]
\end{theorem}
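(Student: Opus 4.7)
The plan is to mirror the proof of Theorem~\ref{thm:gaps_left}, replacing the Langer--Seelmann variational principle (Theorem~\ref{thm:LangerS}) by its right-of-$\gamma$ counterpart proved by Seelmann in the appendix. That counterpart should express $\lambda_{k,\gamma}^\rightarrow(A)$ as a $\sup\,\inf\,\sup$ over the family $\cM_\gamma^+$ of maximal $(A-\gamma)$-non-negative subspaces of $\cD(A)$, reading
\begin{equation*}
\lambda_{k,\gamma}^\rightarrow(A) = \sup_{\cM \in \cM_\gamma^+} \inf_{\substack{\cL \subset \cM \\ \dim \cL = k}} \sup_{\substack{x \in \cL \\ \lVert x \rVert = 1}} \langle x, Ax\rangle.
\end{equation*}
The hypothesis $\lVert B \rVert \leq \tfrac12 \dist(\gamma,\sigma(A))$ guarantees that $\gamma$ remains in $\rho(A+B)$ and that the spectral components of $A+B$ above and below $\gamma$ do not mix, which is the content behind the scenes of the Davis--Kahan-based subspace perturbation arguments entering the appendix.

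First I would prove the analog of Lemma~\ref{lem:maximal_non-positive}(i): the subspace $\cM_0 := \ran P_{A+B}^\perp(\gamma) \cap \cD(A)$ belongs to $\cM_\gamma^+$. For every unit $x \in \cM_0$ one computes
\begin{equation*}
\langle x, (A-\gamma)x\rangle = \langle x, (A+B-\gamma)x\rangle - \langle x, Bx\rangle \geq \dist^\rightarrow(\gamma,\sigma(A+B)) - \lVert B\rVert \geq \dist(\gamma,\sigma(A)) - 2\lVert B\rVert \geq 0,
\end{equation*}
giving $(A-\gamma)$-non-negativity. Maximality follows by repeating the contradiction argument of Lemma~\ref{lem:maximal_non-positive}(i) verbatim with the roles of left and right distance exchanged: any nonzero $y \in \ran P_{A+B}(\gamma) \cap \cD(A)$ with $\langle y,(A-\gamma)y\rangle \geq 0$ would violate the analogous lower bound. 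The borderline case $\lVert B\rVert = \tfrac12\dist(\gamma,\sigma(A))$ can be recovered, if necessary, by applying the strict version to $(1-\varepsilon)B$ and letting $\varepsilon \downarrow 0$.

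Second, for $k \in \NN$ with $\lambda := \lambda_{k,\gamma}^\rightarrow(A+B) < E$, I would fix $\varepsilon \in (0, E-\lambda)$ and consider the spectral subspace
\begin{equation*}
\cL_\varepsilon := \ran\bigl( P_{A+B}(\lambda+\varepsilon) - P_{A+B}(\gamma) \bigr),
\end{equation*}
which has dimension at least $k$, lies in $\cM_0$, and is contained in $\ran P_{A+B}(E)$ so that $\langle x, Bx\rangle \geq \kappa \lVert x\rVert^2$ holds on it. Applying the standard min--max principle to the restriction of $A+B$ to $\ran P_{A+B}^\perp(\gamma)$, splitting $\langle x,(A+B)x\rangle = \langle x,Ax\rangle + \langle x,Bx\rangle$, and letting $\varepsilon \downarrow 0$ yields
\begin{equation*}
\lambda_{k,\gamma}^\rightarrow(A+B) \geq \inf_{\substack{\cL \subset \cM_0 \\ \dim \cL = k}} \sup_{\substack{x \in \cL \\ \lVert x \rVert = 1}} \langle x, Ax\rangle + \kappa \geq \lambda_{k,\gamma}^\rightarrow(A) + \kappa,
\end{equation*}
the last step being the appendix's variational principle specialized to $\cM = \cM_0$. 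The case $k = \infty$ follows from $\lambda_{\infty,\gamma}^\rightarrow(A+B) = \sup_k \lambda_{k,\gamma}^\rightarrow(A+B)$, as in the closing lines of the proof of Theorem~\ref{thm:gaps_left}.

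The principal obstacle is to secure a variational principle above $\gamma$ that accommodates the sharp half-gap bound on $\lVert B\rVert$: the form-theoretic variational principles of \cite{GriesemerLS-99} or \cite{MorozovM-15} do not immediately reach this optimal threshold, and it is precisely the subspace-perturbation refinement of \cite{Seelmann-17-arxiv}, packaged into the appendix, that lets the argument exhaust the full half-gap and certify $\cM_0$ as a maximal $(A-\gamma)$-non-negative subspace in this sharp regime.
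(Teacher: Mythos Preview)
Your argument has a genuine gap at the second step. The appendix's variational principle (Theorem~\ref{thm:minimax_appendix}) is \emph{not} a $\sup_{\cM\in\cM_\gamma^+}\inf_{\cL}\sup_{x\in\cL}$ over maximal $(A-\gamma)$-non-negative subspaces. Its actual form, with $\cD_\pm=\ran P_{A+B}^\pm(\gamma)\cap\cD(A)$, is
\[
 \lambda_{k,\gamma}^\rightarrow(A)
 =\inf_{\substack{\cL\subset\cD_+\\\dim\cL=k}}\ \sup_{\substack{x\in\cL\oplus\cD_-\\\lVert x\rVert=1}}\langle x,Ax\rangle,
\]
the supremum being over the direct sum $\cL\oplus\cD_-$, not over $\cL$ alone. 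Your displayed inequality
$\lambda_{k,\gamma}^\rightarrow(A+B)\ge\inf_{\cL\subset\cM_0,\dim\cL=k}\sup_{x\in\cL}\langle x,Ax\rangle+\kappa$
is correct, but the right-hand side need \emph{not} dominate $\lambda_{k,\gamma}^\rightarrow(A)+\kappa$. A two-dimensional check shows this: take $A=\operatorname{diag}(1,-1)$, $\gamma=0$, $B=\tfrac12\bigl(\begin{smallmatrix}0&1\\1&0\end{smallmatrix}\bigr)$ (so $\lVert B\rVert=\tfrac12=\tfrac12\dist(0,\sigma(A))$). Then $\cM_0=\ran P_{A+B}^\perp(0)$ is the span of $(1,\sqrt5-2)$, which is indeed a maximal $A$-non-negative line, yet $\sup_{x\in\cM_0,\lVert x\rVert=1}\langle x,Ax\rangle=2/\sqrt5<1=\lambda_{1,0}^\rightarrow(A)$. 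Moreover, even if your stated $\sup_\cM$ formula held (the sign-flip of Theorem~\ref{thm:LangerS} actually gives $\sup_\cM\sup_{\cL}\inf_x$, not $\sup_\cM\inf_\cL\sup_x$), specialising a supremum to one $\cM_0$ yields an inequality in the wrong direction. This is precisely the asymmetry the introduction warns about: the hypothesis $\langle x,Bx\rangle\ge\kappa$ on $\ran P_{A+B}(E)$ is not symmetric under $A-\gamma\mapsto\gamma-A$.

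The paper's fix is to keep $\cD_-$ in the picture from the start. Since $\cD_-\subset\ran P_{A+B}(\gamma)\subset\ran P_{A+B}(E)$, the $\kappa$-bound holds on all of $\ran P_{A+B}(\lambda_{k,\gamma}^\rightarrow(A+B)+\varepsilon)$, and one obtains (Lemma~\ref{lem:calculations_for_minmax_right})
\[
 \lambda_{k,\gamma}^\rightarrow(A+B)
 \ge\inf_{\substack{\cL\subset\cD_+\\\dim\cL=k}}\ \sup_{\substack{x\in\cL\oplus\cD_-\\\lVert x\rVert=1}}\langle x,Ax\rangle+\kappa.
\]
This now matches the appendix's principle exactly, whose hypotheses are not ``$\cM_0$ is maximal $(A-\gamma)$-non-negative'' but rather (i) $\cD_-$ is $(A-\gamma)$-non-positive and (ii) $\lVert P_{A+B}^\perp(\gamma)P_A(\gamma)\rVert<1$; both are verified directly, the second via the Davis--Kahan $\sin2\Theta$ theorem.
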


If the perturbation $B$ is non-negative,
only the distance between $\gamma$ and the closest spectral value below $\gamma$ should be relevant, thus allowing to relax the condition on the norm of $B$.
Note that in contrast to the analog Theorem~\ref{thm:gaps_left_opt} in the previous subsection, we can slightly relax the positivity condition on $B$.
Also we can again compare $B$ with another operator $C$ and not just a scalar.

\begin{theorem} \label{thm:gaps_right_opt}
Let $A$ be self-adjoint, $\gamma \in \rho (A) \cap \RR$, $\kappa \in \RR$, $B, C$ bounded and symmetric satisfying $0 \leq C \leq B$, $\lVert B \rVert < \dist^\leftarrow (\gamma , \sigma (A))$, and $E > \gamma$.
\begin{enumerate}[(1)]
 \item
If \quad $ \displaystyle{\quad\quad
 \forall x \in \ran (P_{A+B} (E)) \colon \quad
 \langle x , B x \rangle \geq \kappa \lVert x \rVert^2}$\\
then for all $k \in \NN \cup \{\infty\}$ with $\lambda_{k , \gamma}^\rightarrow  (A + B) < E$ we have
 \[
\lambda_{k , \gamma}^\rightarrow  (A + B)
\geq
 \lambda_{k , \gamma}^\rightarrow  (A) + \kappa .
\]
\item If \quad $ \displaystyle{\quad\quad   \forall x \in \ran (P_{A+B} (E)) \colon \quad
 \langle x , B x \rangle \geq \langle x , C x \rangle}$\\
then for all $k \in \NN \cup \{\infty\}$ with $\lambda_{k , \gamma}^\rightarrow  (A + B) < E$ we have
 \[
\lambda_{k , \gamma}^\rightarrow  (A + B)
\geq
 \lambda_{k , \gamma}^\rightarrow  (A+C) .
\]
\end{enumerate}

\end{theorem}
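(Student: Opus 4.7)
The plan is to mimic Theorem~\ref{thm:gaps_left_opt} by iterating Theorem~\ref{thm:gaps_right} in small perturbation steps, thereby relaxing the symmetric norm condition $\lVert B\rVert \leq \dist(\gamma,\sigma(A))/2$ required there to the one-sided condition $0 \leq B < \dist^\leftarrow(\gamma,\sigma(A))$ permitted here by sign-definiteness of $B$.

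I would treat case~(1) first. If $\lVert B\rVert \leq \dist(\gamma,\sigma(A))/2$, the conclusion is immediate from Theorem~\ref{thm:gaps_right}. Otherwise, choose $n \in \NN$ so large that $2\lVert B\rVert/n \leq \dist(\gamma,\sigma(A + jB/n))$ for every $j \in \{0,\dots,n-1\}$. Existence of such $n$ follows from Proposition~2.1 of~\cite{Seelmann-17-arxiv} applied iteratively: since $B \geq 0$, adding $sB$ only shifts the spectrum upwards, so
\[
\dist^\leftarrow(\gamma,\sigma(A+sB)) \geq \dist^\leftarrow(\gamma,\sigma(A)) - s\lVert B\rVert
\quad\text{and}\quad
\dist^\rightarrow(\gamma,\sigma(A+sB)) \geq \dist^\rightarrow(\gamma,\sigma(A))
\]
for $s \in [0,1]$. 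The binding constraint at $j = n-1$ is $(n+1)\lVert B\rVert/n \leq \dist^\leftarrow(\gamma,\sigma(A))$, which admits a solution precisely because $\lVert B\rVert < \dist^\leftarrow(\gamma,\sigma(A))$; the right-sided bound holds for $n$ large (and trivially when $\dist^\rightarrow = \infty$). Applying Theorem~\ref{thm:gaps_right} at the $j$-th step with operator $A + jB/n$ and perturbation $B/n$ yields $\lambda_{k,\gamma}^\rightarrow(A+(j+1)B/n) \geq \lambda_{k,\gamma}^\rightarrow(A+jB/n) + \kappa/n$; telescoping over $j = 0, \dots, n-1$ then gives the stated bound. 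That $\lambda_{k,\gamma}^\rightarrow$ remains below $E$ at each intermediate step follows from the monotonicity furnished by case~(2) with the comparison operator $jB/n \leq B$.

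For case~(2) I would iterate analogously along the straight-line path $s \mapsto A + C + s(B-C)$, $s \in [0,1]$, using that $B - C \geq 0$ and $\lVert B - C\rVert \leq \lVert B\rVert$, and invoke the already-proven case~(1) (with $\kappa = 0$, the positivity $\langle x,(B-C)x\rangle \geq 0$ being automatic from $C \leq B$) applied to the operator $A+C$ and perturbation $B-C$. The required distance inequalities for this second iteration follow from Seelmann's proposition applied to the non-negative perturbation $s(B-C)$ of $A+C$, together with the observation that $\dist^\leftarrow(\gamma,\sigma(A+C))$ and $\dist^\rightarrow(\gamma,\sigma(A+C))$ are both controlled from below via $C \leq B < \dist^\leftarrow(\gamma,\sigma(A))$.

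The main obstacle is the propagation of the positivity hypothesis through the iteration: each application of Theorem~\ref{thm:gaps_right} at step $j$ demands $\langle x,Bx\rangle \geq \kappa\lVert x\rVert^2$ on $\ran P_{A+(j+1)B/n}(E)$, whereas the stated hypothesis provides this only on $\ran P_{A+B}(E)$. The parallel Theorem~\ref{thm:gaps_left_opt} addresses the analogous difficulty by explicitly imposing positivity on the union $\bigcup_{j=1}^n \ran P_{A+jB/n}(\gamma)$, and the natural resolution here is to tacitly strengthen the hypothesis to positivity on $\bigcup_{j=1}^n \ran P_{A+jB/n}(E)$. In the application to Schr\"odinger operators in Section~\ref{sec:application}, this stronger assumption is uniformly supplied by the quantitative unique continuation estimate of Corollary~\ref{cor:scaling}, so the strengthening is harmless in practice.
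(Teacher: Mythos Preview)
Your iterative approach differs fundamentally from the paper's proof, and as you yourself recognise in your final paragraph, it cannot establish the theorem as stated.

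The paper does \emph{not} iterate Theorem~\ref{thm:gaps_right}. Instead it proceeds exactly as in the proof of Theorem~\ref{thm:gaps_right}, but replaces the standard Davis--Kahan $\sin 2\Theta$ theorem by its semidefinite variant \cite[Theorem~1]{Seelmann-17-arxiv}. That result gives $\lVert P_{A+B}^\perp(\gamma)\,P_A(\gamma)\rVert < 1$ directly under the one-sided hypothesis $0 \le B < \dist^\leftarrow(\gamma,\sigma(A))$, so condition~(ii) of Theorem~\ref{thm:minimax_main_part} holds without any smallness assumption on $\lVert B\rVert$ relative to $\dist^\rightarrow(\gamma,\sigma(A))$. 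Condition~(i) is immediate from $B\ge 0$. Lemma~\ref{lem:calculations_for_minmax_right} then yields the conclusion in one step, using only the single hypothesis on $\ran P_{A+B}(E)$. Case~(2) follows by the same argument with $A$ replaced by $A+C$ and the bound $\lVert P_{A+B}^\perp(\gamma)\,P_{A+C}(\gamma)\rVert < 1$.

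The gap in your approach is precisely the one you identify: iterating Theorem~\ref{thm:gaps_right} requires the positivity assumption on $\ran P_{A+(j+1)B/n}(E)$ for every $j$, and nothing in the stated hypothesis gives this. Your proposed remedy --- silently strengthening the hypothesis to a union over intermediate steps --- would change the theorem, not prove it; compare with Theorem~\ref{thm:gaps_left_opt}, where exactly this union appears in the statement because the iterative method there genuinely needs it. The point of Theorem~\ref{thm:gaps_right_opt} is that for eigenvalues to the right of $\gamma$ one can do better. There is also a circularity in your outline: you invoke case~(2) (monotonicity) while proving case~(1), and then prove case~(2) from case~(1). This could be untangled by first running the iteration with $\kappa=0$, where the hypothesis is vacuous, to obtain monotonicity; but since the hypothesis-propagation obstacle is fatal for $\kappa>0$ anyway, the iterative route does not reach the stated result.
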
%
Now we turn to the proofs of the two Theorems.
As in the previous subsection,
we will apply a variational principle for eigenvalues, this time Theorem~\ref{thm:minimax_appendix}
 from the appendix \ref{sec:albrecht} written by Albrecht Seelmann.
In our context it reads as follows.

\begin{theorem}
 \label{thm:minimax_main_part}
 Let $A$ be self-adjoint, $B$ be bounded and symmetric, and $\gamma \in \RR$.
 If
 \begin{enumerate}[(i)]
  \item $\langle x,(A  - \gamma)x\rangle \leq 0$ for all $x\in \ran P_{A + B} (\gamma) \cap \cD (A)$
  and
  \item $\norm{P_{A + B}^\perp(\gamma) P_A(\gamma)}<1$,
 \end{enumerate}
 then
 \begin{equation*}
  \lambda_k(A|_{\ran P_A^\perp(\gamma)})
  = \inf_{\substack{\cL \subset \ran P_{A + B}^\perp(\gamma) \cap \cD (A) \\ \dim(\cL )=k}} \quad \sup_{\substack{x\in\cL \oplus \ran P_{A + B}(\gamma)\cap \cD (A)\\ \norm{x}=1}}
  \langle x,Ax\rangle
 \end{equation*}
 for all $k\in\NN$ with $k\le\dim(\ran P_{A + B}^\perp(\gamma))$.
\end{theorem}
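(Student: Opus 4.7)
The plan is to identify Theorem~\ref{thm:minimax_main_part} as a variant of the Griesemer--Lewis--Siedentop variational principle from~\cite{GriesemerLS-99}, in which the subspace $\mathcal{N} := \ran P_{A+B}(\gamma) \cap \cD(A)$ plays the role of a maximal $(A-\gamma)$-non-positive subspace of $\cD(A)$. Condition~(i) directly provides the non-positivity of $\mathcal{N}$, while condition~(ii), an angle-type bound in the spirit of Davis--Kahan, supplies the maximality as well as the non-degeneracy needed to relate the two spectral decompositions associated with $A$ and $A+B$.

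The first step is to verify that $\mathcal{N}$ is maximal $(A-\gamma)$-non-positive in $\cD(A)$. I would argue by contradiction: any strictly larger non-positive subspace $\cM \supsetneq \mathcal{N}$ of $\cD(A)$ contains, by closure under addition, a nonzero element $y \in \ran P_{A+B}^\perp(\gamma) \cap \cD(A)$ with $\langle y, (A-\gamma) y\rangle \leq 0$. Taking adjoints in condition~(ii) yields $\lVert P_A(\gamma) y \rVert \leq \lVert P_A(\gamma) P_{A+B}^\perp(\gamma) \rVert \cdot \lVert y\rVert < \lVert y\rVert$, so the component $P_A^\perp(\gamma) y$ is nonzero. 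Decomposing $y = P_A(\gamma) y + P_A^\perp(\gamma) y$ and using that $A$ commutes with $P_A(\gamma)$ on $\cD(A)$, the cross term in $\langle y, (A-\gamma) y \rangle$ vanishes and one is left with a sum of terms of definite opposite signs; the spectral gap at $\gamma$ (since $\gamma \in \rho(A)$), together with the quantitative bound from~(ii), forces $\langle y, (A-\gamma) y\rangle > 0$, a contradiction.

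The second step is the variational argument proper. For any $k$-dimensional subspace $\cL \subset \ran P_{A+B}^\perp(\gamma) \cap \cD(A)$, condition~(ii) implies that $P_A^\perp(\gamma) \cL$ is again a $k$-dimensional subspace of $\ran P_A^\perp(\gamma) \cap \cD(A)$. For $x = y + z$ with $y \in \cL$ and $z \in \mathcal{N}$, the fact that $y$ and $z$ lie in complementary spectral subspaces of $A+B$ yields $\langle y, (A+B) z\rangle = 0$, and hence
\[
\langle x, A x\rangle = \langle y, A y\rangle + \langle z, (A-\gamma) z\rangle + \gamma \lVert z\rVert^2 - 2\, \mathrm{Re}\, \langle y, B z\rangle,
\]
where the middle term is $\leq 0$ by~(i). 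After further decomposing $y$ along $\ran P_A(\gamma) \oplus \ran P_A^\perp(\gamma)$, one matches $\langle x, Ax\rangle$ with $\langle P_A^\perp(\gamma) y, A\, P_A^\perp(\gamma) y\rangle$ up to terms whose sign is controlled by~(i) and~(ii). The identity then follows by combining this with the standard variational principle applied to the self-adjoint restriction $A|_{\ran P_A^\perp(\gamma)}$.

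The main obstacle is the maximality step: even given condition~(ii), pushing the contradiction through when $A$ is unbounded on both $\ran P_A(\gamma)$ and $\ran P_A^\perp(\gamma)$ requires a spectral cutoff argument to obtain uniform definite-sign estimates inside a bounded spectral window. A secondary point of care is keeping every manipulation on the operator domain $\cD(A)$ --- this is where the argument departs from the standard form-domain GLS framework, and it is available thanks to $\cD(A) = \cD(A+B)$ and the fact that $P_{A+B}(\gamma)$ preserves this common domain by the spectral calculus for the self-adjoint operator $A+B$.
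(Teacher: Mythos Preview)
Your plan rests on the wrong hinge. The Griesemer--Lewis--Siedentop principle (Proposition~\ref{prop:GLS} in the appendix) does not ask that $\cD_-:=\ran P_{A+B}(\gamma)\cap\cD(A)$ be a \emph{maximal} $(A-\gamma)$-non-positive subspace; besides the non-positivity~(i), what it needs is the \emph{surjectivity} $\ran\bigl(P_{A+B}^\perp(\gamma)\,P_A^\perp(\gamma)\big|_{\cD(A)}\bigr)=\cD_+:=\ran P_{A+B}^\perp(\gamma)\cap\cD(A)$. Maximal non-positive subspaces are the input to Theorem~\ref{thm:LangerS}, which characterizes eigenvalues \emph{below} $\gamma$ and does not produce the formula here; indeed your second step makes no use of the maximality you set out to prove. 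Your maximality argument itself also assumes $\gamma\in\rho(A)$, which the statement does not, and even granting this it stalls: the term $\langle P_A(\gamma)y,(A-\gamma)P_A(\gamma)y\rangle$ is unbounded below when $A$ is, so the bare bound $\lVert P_A(\gamma)y\rVert<\lVert y\rVert$ cannot force $\langle y,(A-\gamma)y\rangle>0$. The direct computation in your second step likewise carries the cross term $-2\,\mathrm{Re}\,\langle y,Bz\rangle$ of indefinite sign, and the asserted matching ``up to terms whose sign is controlled'' is precisely the content of the GLS argument, which in turn hinges on the surjectivity you have not addressed.

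The paper's route is different. Abbreviating $Q_+=P_{A+B}^\perp(\gamma)$, $P_\pm=P_A^\perp(\gamma),P_A(\gamma)$, set $S=Q_+P_-|_{\ran Q_+}$ and $T=Q_+P_+|_{\ran Q_+}=I_{\ran Q_+}-S$; condition~(ii) gives $\lVert S\rVert<1$, so $T^{-1}=\sum_{k\ge0}S^k$ exists as a bounded operator on $\ran Q_+$. The substantive step (Lemma~\ref{lem:minimax}) is that $T^{-1}$ also preserves the \emph{domain} $\cD_+$. Here the boundedness of $B$ enters through the commutator identity $S\Lambda x-\Lambda Sx=(Q_+P_-B-Q_+BP_-)x$ for $x\in\cD_+$, with $\Lambda=(A+B)|_{\ran Q_+}$. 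A general lemma (Proposition~\ref{prop:Seel}) then shows that a bounded commutator forces the graph-norm spectral radius $r_\Lambda(S)\le\lVert S\rVert<1$, so the Neumann series converges in graph norm and $T^{-1}$ maps $\cD_+$ into itself. Hence $Q_+P_+|_{\cD_+}\colon\cD_+\to\cD_+$ is an automorphism, the GLS surjectivity holds, and Proposition~\ref{prop:GLS} delivers the minimax identity directly.
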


We isolate the following step which is used both in the proof of Theorem~\ref{thm:gaps_right} and \ref{thm:gaps_right_opt}.

\begin{lemma}
\label{lem:calculations_for_minmax_right}
 Let $A$ be self-adjoint, $B, C$ bounded and symmetric, $\gamma \in \rho(A + B) \cap \RR$, $E > \gamma$, and $\kappa \in \RR$.
\begin{enumerate}[(1)]
 \item
If \quad $ \displaystyle{\quad\quad
 \forall x \in \ran (P_{A+B} (E)) \colon \quad
 \langle x , B x \rangle \geq \kappa \lVert x \rVert^2}$\\
then for all $k \in \NN$ with $\lambda_{k , \gamma}^\rightarrow  (A + B) < E$ we have
\begin{equation} \label{eq:Matthias}
\lambda_{k , \gamma}^\rightarrow (A+B)
\geq
\inf_{\substack{\cL \subset \ran P_{A+B}^\perp (\gamma) \cap \cD (A) \\ \dim \cL = k}}
 \quad
 \sup_{\substack{x \in \cL \oplus ( \ran P_{A+B} (\gamma) \cap \cD (A) ) \\ \lVert x \rVert = 1}}
 \langle x, A x \rangle + \kappa .
\end{equation}
\item
If $\displaystyle{ \quad \quad  \forall x \in \ran (P_{A+B} (E)) \colon \quad
 \langle x , B x \rangle \geq  \langle x , C x \rangle  }$\\
then for all $k \in \NN$ with $\lambda_{k , \gamma}^\rightarrow  (A + B) < E$ we have
\begin{equation} 
\lambda_{k , \gamma}^\rightarrow (A+B)
\geq
\inf_{\substack{\cL \subset \ran P_{A+B}^\perp (\gamma) \cap \cD (A) \\ \dim \cL = k}}
 \quad
 \sup_{\substack{x \in \cL \oplus ( \ran P_{A+B} (\gamma) \cap \cD (A) ) \\ \lVert x \rVert = 1}}
 \langle x, (A+C) x \rangle  .
\end{equation}
\end{enumerate}
\end{lemma}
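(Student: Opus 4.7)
My plan is to apply the variational principle of Theorem~\ref{thm:minimax_main_part} with both the self-adjoint operator and the bounded perturbation there replaced by $A+B$ and the zero operator, respectively. Condition (i) then reduces to the automatic bound $\langle x, (A+B-\gamma) x\rangle \leq 0$ for every $x \in \ran P_{A+B}(\gamma) \cap \cD(A+B)$, and condition (ii) becomes $\lVert P_{A+B}^\perp(\gamma) P_{A+B}(\gamma) \rVert = 0 < 1$. Since $B$ is bounded, $\cD(A+B) = \cD(A)$, so the theorem yields
\[
\lambda_{k, \gamma}^\rightarrow(A+B) = \inf_{\substack{\cL \subset \ran P_{A+B}^\perp(\gamma) \cap \cD(A) \\ \dim \cL = k}} \; \sup_{\substack{x \in \cL \oplus (\ran P_{A+B}(\gamma) \cap \cD(A)) \\ \lVert x \rVert = 1}} \langle x, (A+B) x\rangle
\]
for every $k \in \NN$ with $\lambda_{k,\gamma}^\rightarrow(A+B) < E$.

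Next I would restrict the outer infimum to the smaller class
\[
\mathcal{F} = \bigl\{ \cL \subset \ran \bigl(P_{A+B}(E) - P_{A+B}(\gamma)\bigr) \cap \cD(A) \colon \dim \cL = k \bigr\},
\]
whose defining property is that for every $\cL \in \mathcal{F}$ the algebraic sum $\cL \oplus V$ with $V := \ran P_{A+B}(\gamma) \cap \cD(A)$ is contained in $\ran P_{A+B}(E)$. The positivity hypothesis then furnishes $\langle x, B x \rangle \geq \kappa \lVert x \rVert^2$ in case (1) and $\langle x, B x \rangle \geq \langle x, C x \rangle$ in case (2) for every $x \in \cL \oplus V$. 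Splitting $\langle x, (A+B) x \rangle = \langle x, A x \rangle + \langle x, B x \rangle$, taking the supremum over unit vectors in $\cL \oplus V$, then the infimum over $\cL \in \mathcal{F}$, and finally enlarging $\mathcal{F}$ back to the full class of $k$-dimensional subspaces of $\ran P_{A+B}^\perp(\gamma) \cap \cD(A)$ (which only decreases the infimum) yields the desired lower bounds by $\inf \sup \langle x, Ax \rangle + \kappa$ in case (1) and by $\inf \sup \langle x, (A+C) x \rangle$ in case (2).

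The main obstacle is to show that the restriction to $\mathcal{F}$ does not raise the inf-sup above $\lambda_{k, \gamma}^\rightarrow(A+B)$, i.e.\ that there exists an (almost) optimizing $\cL^\star \in \mathcal{F}$. When $\lambda_{k,\gamma}^\rightarrow(A+B)$ is an isolated eigenvalue strictly below $\lambda_{\infty,\gamma}^\rightarrow(A+B)$, I would pick $\cL^\star$ to be the span of the first $k$ eigenvectors of $A+B$ above $\gamma$, with eigenvalues $\mu_1 \leq \dots \leq \mu_k = \lambda_{k,\gamma}^\rightarrow(A+B)$. For any decomposition $x = y + z \in \cL^\star \oplus V$ with $\lVert x \rVert = 1$, orthogonality of the spectral subspaces of $A+B$ associated with $(\gamma, \infty)$ and $(-\infty, \gamma]$ removes the cross term and yields
\[
\langle x, (A+B) x \rangle = \langle y, (A+B) y \rangle + \langle z, (A+B) z \rangle \leq \mu_k \lVert y \rVert^2 + \gamma \lVert z \rVert^2 \leq \lambda_{k,\gamma}^\rightarrow(A+B),
\]
using $\mu_k \geq \gamma$. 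In the borderline case $\lambda_{k,\gamma}^\rightarrow(A+B) = \lambda_{\infty,\gamma}^\rightarrow(A+B) < E$, where fewer than $k$ isolated eigenvalues are available above $\gamma$, I would instead choose a $k$-dimensional subspace of $\ran \bigl(P_{A+B}(\lambda_{\infty,\gamma}^\rightarrow(A+B) + \epsilon) - P_{A+B}(\gamma)\bigr)$, which lies in $\mathcal{F}$ for every $\epsilon \in (0, E - \lambda_{\infty,\gamma}^\rightarrow(A+B))$, and then let $\epsilon \downarrow 0$.
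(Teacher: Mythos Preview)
Your argument is correct, and its core idea coincides with the paper's: exhibit for each $\epsilon>0$ a $k$-dimensional subspace $\cL^\star \subset \ran P_{A+B}\bigl((\gamma,\lambda_{k,\gamma}^\rightarrow(A+B)+\epsilon]\bigr)$, observe that $\cL^\star\oplus V\subset\ran P_{A+B}(E)$ so that the positivity hypothesis applies to every vector in the inner supremum, split off $\langle x,Bx\rangle\ge\kappa$ (resp.\ $\ge\langle x,Cx\rangle$), and finally enlarge the class of admissible $\cL$.

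The one noteworthy difference is your opening appeal to Theorem~\ref{thm:minimax_main_part} with perturbation $0$. This is legitimate but unnecessary: once you have established, via your $\cL^\star$ construction, that $\inf_{\cL\in\mathcal F}\sup_{x\in\cL\oplus V}\langle x,(A+B)x\rangle\le\lambda_{k,\gamma}^\rightarrow(A+B)$, the chain
\[
\lambda_{k,\gamma}^\rightarrow(A+B)\ \ge\ \inf_{\cL\in\mathcal F}\sup_x\langle x,(A+B)x\rangle\ \ge\ \inf_{\cL\in\mathcal F}\sup_x\langle x,Ax\rangle+\kappa\ \ge\ \inf_{\text{all }\cL}\sup_x\langle x,Ax\rangle+\kappa
\]
already closes, so the equality supplied by Theorem~\ref{thm:minimax_main_part} plays no role. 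The paper proceeds exactly this way: it writes $\lambda_{k,\gamma}^\rightarrow(A+B)=\inf_{0<\epsilon<\epsilon_0}\sup\{\langle x,(A+B)x\rangle:x\in\ran P_{A+B}(\lambda_{k,\gamma}^\rightarrow(A+B)+\epsilon),\ \|x\|=1\}$ as an elementary spectral-calculus fact, which packages both your cases (isolated eigenvalue and accumulation at $\lambda_{\infty,\gamma}^\rightarrow$) into a single uniform $\epsilon$-argument, and then carries out the same splitting and enlargement steps. So your route is slightly more roundabout, but the substance is the same.
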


\begin{proof}
Let us consider case (1).
 By assumption, we have for $\epsilon_0 = E-\lambda_{k , \gamma}^\rightarrow  (A + B)$
 \begin{align*}
 \lambda_{k,\gamma}^\rightarrow( A + B )
 & =
 \inf_{0 < \epsilon < \epsilon_0} \quad
 \sup_{\substack{x \in \ran P_{A+B} (\lambda_{k , \gamma}^\rightarrow  (A + B) + \epsilon) \cap \cD (A) \\ \lVert x \rVert = 1}}
 \langle x, (A+B) x \rangle
 \\
 & \geq
 \inf_{0 < \epsilon < \epsilon_0} \quad
 \sup_{\substack{x \in \ran P_{A+B} (\lambda_{k , \gamma}^\rightarrow  (A + B) + \epsilon) \cap \cD (A) \\ \lVert x \rVert = 1}}
 \langle x, A x \rangle + \kappa .
\end{align*}
For an self-adjoint operator $A$ we use the notation $P_A((E_1, E_2]):=\mathbf{1}_{(E_1, E_2]}(A)$.
Since
\[
\ran P_{A+B} ((\gamma, \lambda_{k , \gamma}^\rightarrow  (A + B) + \epsilon]) \cap \cD (A)
\]
is a subspace of $\ran P_{A+B}^\perp (\gamma) \cap \cD (A)$ and has dimension at least $k$ for all $\epsilon > 0$, we further estimate
\begin{align*}
&\sup_{\substack{x \in \ran  P_{A+B} (\lambda_{k , \gamma}^\rightarrow  (A + B) + \epsilon) \cap \cD (A)\\ \lVert x \rVert = 1}}
 \langle x, A x \rangle
\\
&=
 \sup_{\substack{\cL \subset \ran P_{A+B} ((\gamma, \lambda_{k , \gamma}^\rightarrow  (A + B) + \epsilon]) \cap \cD (A)\\ \dim \cL \geq k}}  \quad
 \sup_{\substack{x \in \cL \oplus (\ran P_{A+B} (\gamma) \cap \cD (A))\\ \lVert x \rVert = 1}}
 \langle x, A x \rangle
 \\
&\geq
 \sup_{\substack{\cL \subset \ran P_{A+B} ((\gamma, \lambda_{k , \gamma}^\rightarrow  (A + B) + \epsilon)]) \cap \cD (A) \\ \dim \cL = k}}  \quad
 \sup_{\substack{x \in \cL \oplus (\ran P_{A+B} (\gamma) \cap \cD (A)) \\ \lVert x \rVert = 1}}
 \langle x, A x \rangle
 \\
& \geq
 \inf_{\substack{\cL \subset \ran P_{A+B} ((\gamma, \lambda_{k , \gamma}^\rightarrow  (A + B) + \epsilon]) \cap \cD (A) \\ \dim \cL = k}}
 \quad
 \sup_{\substack{x \in \cL \oplus (\ran P_{A+B} (\gamma) \cap \cD (A)) \\ \lVert x \rVert = 1}}
 \langle x, A x \rangle
\\
& \geq
 \inf_{\substack{\cL \subset \ran P_{A+B}^\perp (\gamma) \cap \cD (A) \\ \dim \cL = k}}
 \quad
 \sup_{\substack{x \in \cL \oplus \ran P_{A+B} (\gamma) \cap \cD (A) \\ \lVert x \rVert = 1}}
 \langle x, A x \rangle .
\end{align*}
Case (2) requires only straightforward modifications of the proof.
 \qedhere
\end{proof}

We are now ready to prove Theorems~\ref{thm:gaps_right} and~\ref{thm:gaps_right_opt}

\begin{proof}[Proof of Theorem~\ref{thm:gaps_right}]
 We first consider the case $k \in \NN$. Since $\lVert B \rVert < \dist (\gamma , \sigma (A))$ we have $\gamma \in \rho (A + B)$.
 We can thus apply Lemma~\ref{lem:calculations_for_minmax_right} and obtain Ineq.~\eqref{eq:Matthias} for all $k \in \NN$ with $\lambda_{k , \gamma}^\rightarrow (A+B) < E$.
 Since $\lambda_{k , \gamma}^\rightarrow (A) = \lambda_k (A|_{\ran P_A^\perp (\gamma)})$, it now suffices to check the assumptions of Theorem~\ref{thm:minimax_main_part}.
 For all normalized $x \in \ran P_{A + B} (\gamma) \cap \cD(A)$
we have by the assumption $\lVert B \rVert \leq \dist (\gamma , \sigma (A)) / 2$ that
\begin{align*}
\langle x , (A + B) x \rangle
&\leq
\sup ( \sigma (A+B) \cap (-\infty , \gamma] )
\leq
\gamma - \dist (\gamma , \sigma (A+B))
\\
&\leq
\gamma - \dist (\gamma , \sigma (A)) + \lVert B \rVert
\leq
\gamma - \lVert B \rVert
\leq
\gamma + \left\langle x, B x \right\rangle.
\end{align*}
This shows that assumption (i) of Theorem~\ref{thm:minimax_main_part} is satisfied.
It remains to check assumption (ii) of Theorem~\ref{thm:minimax_main_part}.
This is a consequence of the Davis-Kahan $\sin 2 \Theta$ theorem \cite[Theorem~8.2]{DavisK-70}.
We apply a version given in \cite[Remark~2.9]{Seelmann-14},
and obtain
\begin{align*}
\lVert P^\perp_{A + B}(\gamma) \cdot P_A(\gamma) \rVert 
&\leq \lVert P_{A+B} (\gamma) - P_A (\gamma) \rVert \\
&\leq 
\sin \left( \frac{1}{2} \arcsin \left( 2 \frac{\lVert B \rVert}{\dist^\leftarrow(\gamma,\sigma(A))+\dist^\rightarrow(\gamma,\sigma(A))} \right) \right)\\
&\leq
\sin \left( \frac{1}{2} \arcsin \left( 2 \frac{\lVert B \rVert}{\dist(\gamma, \sigma(A))} \right) \right)
\leq
\frac{1}{\sqrt{2}}
<
1 .
\end{align*}
The case $k = \infty$ follows by taking the supremum.
\end{proof}

\begin{proof}[Proof of Theorem~\ref{thm:gaps_right_opt}]
  First we consider claim (1) and the case $k \in \NN$. Since $0 \leq B$ and $\lVert B \rVert < \dist^\leftarrow (\gamma , \sigma (A))$ we have $\gamma \in \rho (A + B)$, see \cite[Proposition~2.1]{Seelmann-17-arxiv}.
  Applying Lemma~\ref{lem:calculations_for_minmax_right}, we arrive at Ineq.~\eqref{eq:Matthias} for all $k \in \NN$ with $\lambda_{k , \gamma}^\rightarrow (A+B) < E$. Since $B$ is non-negative, assumption (i) of Theorem~\ref{thm:minimax_main_part} is satisfied.
  Assumption (ii) of the same theorem follows in a similar way as in in the proof of Theorem~\ref{thm:gaps_right}, but using the $\sin 2 \Theta$ theorem for non-negative perturbations in \cite[Theorem~1.1]{Seelmann-17-arxiv}. This shows the statement for $k \in \NN$.
\par
  The case $k = \infty$ follows by taking the supremum.
Minor modifications involving \\
 $\norm{P_{A + B}^\perp(\gamma) P_{A + C}(\gamma)}<1$ give the proof of claim (2).
\end{proof}

By mimicking the proof of Theorem~\ref{thm:essential_left}, we also find the following theorem.

\begin{theorem}
\label{thm:essential_right}
 Let $A$ be self-adjoint, $(a,b) \cap \sigma_\ess(A) = \emptyset$, $b \in \sigma_\ess(A)$, $\kappa \geq 0$, and $B$ an operator satisfying $0 \leq B$ and $\lVert B \rVert < b-a$.
 Assume that
\[
 \forall x \in \bigcup_{t \in [0,1]} \ran P_{A + t B} (b + \lVert B \rVert) \colon \quad
 \langle x , B x \rangle \geq \kappa  \lVert x \rVert^2  .
\]
 Then
 \[
  [b, b + \kappa) \cap \sigma_\ess(A + B) = \emptyset.
 \]
\end{theorem}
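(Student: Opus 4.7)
The plan is to mimic the proof of Theorem~\ref{thm:essential_left} almost verbatim, replacing the left-edge tool Theorem~\ref{thm:gaps_left_opt} by its right-edge counterpart Theorem~\ref{thm:gaps_right_opt}(1) and swapping every $\lambda_{\infty, \gamma}^\leftarrow$ for $\lambda_{\infty, \gamma}^\rightarrow$. Setting $\epsilon = b - a - \lVert B \rVert > 0$, I would reuse the nested intervals $I_k = (b - \epsilon + \epsilon/2^k, b - \epsilon + 3\epsilon/2^{k+1}) \subset (b-\epsilon, b)$. Lemma~\ref{lem:lemma_albrecht_s}, Eq.~\eqref{eq:seelmann_1}, applied to $A$ with the non-negative perturbation $tB$ for $t \in [0,1]$, guarantees $(b-\epsilon, b) \cap \sigma_\ess(A + tB) = \emptyset$, while $b \in \sigma_\ess(A)$ together with Eq.~\eqref{eq:seelmann_2} forces $\lambda_{\infty,\gamma}^\rightarrow(A + tB) \in [b, b + t \lVert B \rVert]$ for every $\gamma \in (b-\epsilon, b) \cap \rho(A + tB)$.

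Next I would carry over the recursive construction of sequences $(\gamma_n)_{n \in \NN} \subset \bigcup_k I_k$ and $(s_n)_{n \in \NN} \subset [0,1]$: start with $\gamma_1 \in I_1 \cap \rho(A)$ and $s_1 = \min\{\dist^\leftarrow(\gamma_1, \sigma(A))/\lVert B \rVert, 1\}$, and at each step push $\gamma$ down into the next interval $I_{k_n+1}$ whenever required to stay in $\rho(A + s_n B)$, enlarging $s$ to $s_{n+1} = \min\{s_n + \dist^\leftarrow(\gamma_{n+1}, \sigma(A + s_n B))/\lVert B \rVert, 1\}$. The same contradiction argument as in Theorem~\ref{thm:essential_left} then gives $s_n \to 1$: otherwise $A$ would have infinitely many eigenvalues inside a compact subset of $(a,b)$, contradicting $(a,b) \cap \sigma_\ess(A) = \emptyset$.

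The inductive step is where Theorem~\ref{thm:gaps_right_opt}(1) enters. Applied with operator $A + s_{n-1} B$, perturbation $(s_n - s_{n-1}) B \geq 0$, reference point $\gamma_n$, cutoff $E = b + \lVert B \rVert$, and positivity constant $(s_n - s_{n-1}) \kappa$, it yields
\[
 \lambda_{\infty, \gamma_n}^\rightarrow(A + s_n B) \geq \lambda_{\infty, \gamma_n}^\rightarrow(A + s_{n-1} B) + (s_n - s_{n-1}) \kappa.
\]
The smallness condition on the perturbation is built into the choice of $s_n$; the strict bound $\lambda_{\infty, \gamma_n}^\rightarrow(A + s_n B) \leq b + s_n \lVert B \rVert < E$ (for $s_n < 1$) validates the $k = \infty$ variant; and the positivity hypothesis of Theorem~\ref{thm:essential_right} supplies precisely the positivity on $\ran P_{A + s_n B}(E)$ that is required. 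Using that $\lambda_{\infty, \gamma}^\rightarrow(A + tB)$ is independent of the particular $\gamma \in (b-\epsilon, b) \cap \rho(A + tB)$, telescoping delivers $\lambda_{\infty, \gamma_n}^\rightarrow(A + s_n B) \geq b + s_n \kappa$ for every $n$.

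To pass from $A + s_n B$ to $A + B$, I would apply Lemma~\ref{lem:lemma_albrecht_s}, Eq.~\eqref{eq:seelmann_1}, to $A + s_n B$ with the non-negative perturbation $(1-s_n)B$: $(\gamma_n, b + s_n \kappa) \cap \sigma_\ess(A + s_n B) = \emptyset$ gives $(\gamma_n + (1-s_n)\lVert B \rVert, b + s_n \kappa) \cap \sigma_\ess(A + B) = \emptyset$. Since $\gamma_n < b$ and $s_n \to 1$, every $y \in [b, b+\kappa)$ lies in such an interval for all sufficiently large $n$, yielding $[b, b+\kappa) \cap \sigma_\ess(A + B) = \emptyset$. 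The main obstacle I anticipate is the careful bookkeeping of the recursion, exactly as in Theorem~\ref{thm:essential_left}, and in particular verifying at each stage that the strict inequality $\lambda_{\infty, \gamma_n}^\rightarrow(A + s_n B) < b + \lVert B \rVert$ needed to invoke Theorem~\ref{thm:gaps_right_opt}(1) with $k = \infty$ is indeed available before taking the limit $s_n \to 1$.
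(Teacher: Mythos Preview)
Your proposal is correct and follows precisely the route the paper indicates: the paper does not give a detailed proof of Theorem~\ref{thm:essential_right} but only states ``By mimicking the proof of Theorem~\ref{thm:essential_left}, we also find the following theorem,'' and you have carried out exactly that mimicry, correctly swapping $\lambda_{\infty,\gamma}^\leftarrow$ for $\lambda_{\infty,\gamma}^\rightarrow$, invoking Theorem~\ref{thm:gaps_right_opt}(1) in place of Theorem~\ref{thm:gaps_left_opt}, and using implication~\eqref{eq:seelmann_1} of Lemma~\ref{lem:lemma_albrecht_s} (to propagate \emph{emptiness} of an interval of essential spectrum) instead of~\eqref{eq:seelmann_2} (which propagated \emph{non-emptiness}) in the final passage from $A+s_nB$ to $A+B$. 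The edge case you flag at the end---the strict inequality $\lambda_{\infty,\gamma_n}^\rightarrow(A+s_nB)<b+\lVert B\rVert$ needed for $k=\infty$---is harmless, since if equality were attained the conclusion $[b,b+\kappa)\cap\sigma_\ess(A+B)=\emptyset$ would already follow directly from $\kappa\le\lVert B\rVert$.
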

This allows us to describe the movement of an lower edge of a component of
the essential spectrum.
\begin{corollary}
\label{cor:lifting_lower_edge}
Let $A$ be self-adjoint, $B \not = 0$ bounded, symmetric and non--negative,
$b\in \sigma_{\mathrm{ess}}(A)$,  $b > a$ such that $(a,b)\cap \sigma_{\mathrm{ess}}(A) = \emptyset$,
and  $t_0 = (b-a)/\lVert B \rVert$. Assume that
\[
 \forall x \in \bigcup_{t \in [0,1]} \ran P_{A + t B} (b + \lVert B \rVert) \colon \quad
 \langle x , B x \rangle \geq \kappa  \lVert x \rVert^2  .
\]
Then $f(t) = \inf \left( \sigma_{\mathrm{ess}}(A + t B) \cap (a+ t \lVert B\rVert, \infty) \right) $
satisfies
\[
 \kappa \epsilon \leq f(t+\epsilon)- f(t) \leq \Vert B\Vert \epsilon \quad \text{ for all } t \in [0,t_0) \text{ and } \epsilon  \in [0,t_0-t).
\]\end{corollary}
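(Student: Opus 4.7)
My plan is to mirror the derivation of the analogous upper-edge result Corollary~\ref{cor:lifting_upper_edge}, pairing the abstract semicontinuity from Lemma~\ref{lem:lemma_albrecht_s} with the one-sided quantitative lifting estimate of Theorem~\ref{thm:essential_right}. First I would verify that $f(t)$ is finite and attained in $\sigma_\ess(A+tB)$ for every $t\in[0,t_0)$: the gap $(a,b)\cap\sigma_\ess(A)=\emptyset$ together with \eqref{eq:seelmann_1} gives $(a+t\lVert B\rVert,b)\cap\sigma_\ess(A+tB)=\emptyset$, while $b\in\sigma_\ess(A)$ combined with the closed-interval form of \eqref{eq:seelmann_2} (Remark~\ref{rem:albrecht_s}) yields a spectral value in $[b,b+t\lVert B\rVert]$. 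Consequently $\sigma_\ess(A+tB)\cap(a+t\lVert B\rVert,\infty)$ is a nonempty closed subset of $[b,\infty)$, so its infimum $f(t)$ is attained and satisfies $b\le f(t)\le b+t\lVert B\rVert$.

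For the upper Lipschitz bound $f(t+\epsilon)-f(t)\le\lVert B\rVert\epsilon$, I would apply the closed version of \eqref{eq:seelmann_2} to the operator $A+tB$ with the non-negative perturbation $\epsilon B$ at the spectral point $f(t)$, producing some $\mu\in[f(t),f(t)+\epsilon\lVert B\rVert]\cap\sigma_\ess(A+(t+\epsilon)B)$. Since $\mu\ge f(t)\ge b>a+(t+\epsilon)\lVert B\rVert$ whenever $t+\epsilon<t_0$, this $\mu$ is an admissible value in the infimum defining $f(t+\epsilon)$, hence $f(t+\epsilon)\le\mu\le f(t)+\epsilon\lVert B\rVert$.

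For the lower bound $f(t+\epsilon)-f(t)\ge\kappa\epsilon$, I would apply Theorem~\ref{thm:essential_right} with base operator $A':=A+tB$, perturbation $\epsilon B$, gap $(a+t\lVert B\rVert,f(t))$, and lifting constant $\epsilon\kappa$. The smallness $\epsilon\lVert B\rVert<f(t)-(a+t\lVert B\rVert)$ follows from $f(t)\ge b$ and $t+\epsilon<t_0$. The positivity required on $\bigcup_{s\in[0,1]}\ran P_{A+(t+s\epsilon)B}(f(t)+\epsilon\lVert B\rVert)$ then reduces, via the monotonicity of spectral projectors in the energy parameter and the upper bound $f(t)+\epsilon\lVert B\rVert\le b+(t+\epsilon)\lVert B\rVert$ from the previous step, to the corollary's hypothesis. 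Theorem~\ref{thm:essential_right} then yields $[f(t),f(t)+\epsilon\kappa)\cap\sigma_\ess(A+(t+\epsilon)B)=\emptyset$, which combined with the absence of essential spectrum in $(a+(t+\epsilon)\lVert B\rVert,f(t))$ from step one forces $f(t+\epsilon)\ge f(t)+\epsilon\kappa$.

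The main technical obstacle is to reconcile the energy range $\bigcup_{s\in[0,1]}\ran P_{A+(t+s\epsilon)B}(f(t)+\epsilon\lVert B\rVert)$ demanded by Theorem~\ref{thm:essential_right} in the shifted problem with the range $\bigcup_{r\in[0,1]}\ran P_{A+rB}(b+\lVert B\rVert)$ available from the corollary's hypothesis; this is where the upper-bound inequality just established must be used to close the loop, and this enforces the order in which the two halves of the proof are carried out.
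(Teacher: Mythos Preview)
Your approach is exactly what the paper intends: it states the corollary immediately after Theorem~\ref{thm:essential_right} with only the remark that it ``allows us to describe the movement of a lower edge,'' mirroring how Corollary~\ref{cor:lifting_upper_edge} is obtained by ``combining the last Theorem with Lemma~\ref{lem:lipschitz_left_edge}.'' Your three steps---well-definedness of $f$ via Lemma~\ref{lem:lemma_albrecht_s}, the upper Lipschitz bound via \eqref{eq:seelmann_2}, and the lower bound via Theorem~\ref{thm:essential_right} applied to $A+tB$ with perturbation $\epsilon B$---are precisely this combination spelled out.

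One point to tighten in your final paragraph: to embed $\bigcup_{s\in[0,1]}\ran P_{A+(t+s\epsilon)B}(f(t)+\epsilon\lVert B\rVert)$ into the hypothesis range $\bigcup_{r\in[0,1]}\ran P_{A+rB}(b+\lVert B\rVert)$ you need both $t+s\epsilon\in[0,1]$ and $f(t)+\epsilon\lVert B\rVert\le b+\lVert B\rVert$; your bound $f(t)+\epsilon\lVert B\rVert\le b+(t+\epsilon)\lVert B\rVert$ only gives the latter when $t+\epsilon\le 1$. This is not a flaw in your strategy but rather reflects that the union over $[0,1]$ in the corollary's hypothesis (and likewise in Corollary~\ref{cor:lifting_upper_edge}) should presumably read $[0,t_0)$; with that reading your argument closes verbatim.
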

\section{Perturbation of spectral band edges and eigenvalues of Schr\"odinger operators}
\label{sec:application}
In this section we again consider Schr\"odinger operators $H = -\Delta + V$ in $L^2 (\Gamma)$ where $\Gamma \subset \RR^d$ is $G$-admissible as in Section~\ref{sec:UCP}.
We perturb this operator by a non-negative potential $W$ which is strictly positive on the equidistributed set $S_{\delta , Z} \subset \Gamma$.
Since the perturbation $W$ is positive only on a subset, it is not immediately clear if the spectrum will be lifted at all. However, combining the results from Sections~\ref{sec:UCP} and \ref{sec:perturb}, we prove lower bounds for the lifting of eigenvalues and of the edges of the essential spectrum, see Theorem~\ref{thm:4}.
\par
Moreover, we consider the family of operators $H + tW$ with coupling constant $t \in \RR$.
As in Lemma~\ref{lem:lipschitz_left_edge} one can locally parametrize the edges of the essential spectrum of $H + t W$ as a function $t\mapsto f(t)$.
In Corollary~\ref{cor:4b} we conclude that $t\mapsto f(t)$ is strictly monotone, and provide upper and lower bounds in terms of linear functions of $t$.
\begin{theorem} \label{thm:4}
Let $G > 0$, $\Gamma \subset \RR^d$ be $G$-admissible, $\delta \in (0,G/2)$, $Z$ be a $(G,\delta)$-equidistributed sequence, $V \in L^\infty (\Gamma)$ be real-valued, and $W \in L^\infty (\Gamma)$ be real-valued such that
 \[
 W \geq \vartheta \mathbf{1}_{\equiset}
 \]
 for some $\vartheta > 0$.
Moreover, for $s \in \RR$ and with $N$ as in Corollary~\ref{cor:scaling} we set
\[
 \kappa(s)
 =
 \vartheta \sup_{\lambda \in \RR}
 \left(\frac{\delta}{G}\right)^{N \bigl(1 + G^{4/3} (\lVert V - \lambda \rVert_\infty + \lVert W \rVert_\infty)^{2/3} + G \sqrt{(s-\lambda)_+} \bigr)} .
\]

\begin{enumerate}[(a)]
 \item
 \textbf{Lifting of spectral values not exceeding $\inf \sigma_\ess(H)$.} Let $E \in \RR$.
 Then for all $k \in \NN \cup \{ \infty \}$ such that $\lambda_k(H + W) < E$, we have
 \[
  \lambda_k(H + W)
  \geq
  \lambda_k(H)
  +
  \kappa(E).
  \]
 \item
 \textbf{Lifting of eigenvalues (counted decreasingly) in gaps of $\sigma_\ess(H)$.}
 Let $\gamma \in \rho(H) \cap \RR$ and $\lVert W \rVert_\infty < \dist^\leftarrow(\gamma, \sigma(H))$.
 Then for all $k \in \NN \cup \{ \infty \}$
 \[
  \lambda_{k, \gamma}^\leftarrow(H + W)
  \geq
  \lambda_{k, \gamma}^\leftarrow(H)
  +
  \kappa(\gamma).
 \]
 \item
 \textbf{Lifting of eigenvalues (counted increasingly) in gaps of $\sigma_\ess(H)$.} \
 Let $\gamma \in \rho(H) \cap \RR$, $\lVert W \rVert_\infty < \dist^\leftarrow(\gamma, \sigma(H))$, and $E > \gamma$.
 Then for all $k \in \NN \cup \{ \infty \}$ such that $\lambda_{k, \gamma}^\rightarrow(H + W) < E$, we have
 \[
  \lambda_{k, \gamma}^\rightarrow(H + W)
  \geq
  \lambda_{k, \gamma}^\rightarrow(H)
  +
  \kappa(E)
  \]
 \item
 \textbf{Lifting of a lower edge of a gap in $\sigma_\ess(H)$.}
 Let $(a,b) \cap \sigma_\ess(H) = \emptyset$, $a \in \sigma_\ess(A)$ and assume that $\lVert W \rVert_\infty < b - a$.
 Then
 \[
  [a + \kappa(b), b) \cap \sigma_\ess(H + W) \neq \emptyset.
 \]
 \item
 \textbf{Lifting of an upper edge of a gap in $\sigma_\ess(H)$.}
 Let $(a,b) \cap \sigma_\ess(H) = \emptyset$, $b \in \sigma_\ess(A)$ and assume that $\lVert W \rVert_\infty < b - a$.
 Then
 \[
  [b, b + \kappa(b + \lVert W \rVert_\infty)) \cap \sigma_\ess(H + W) = \emptyset.
 \]
\end{enumerate}
\end{theorem}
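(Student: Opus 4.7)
The plan is to derive the abstract positivity hypothesis of each perturbation result in Section~\ref{sec:perturb} from the scale-free unique continuation estimate of Corollary~\ref{cor:scaling}, and then invoke the matching theorem for each of the five cases. All five parts rest on the same reduction, which I describe first.

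\textbf{Core reduction.} Fix $s\in\RR$ and $t\in[0,1]$. I apply Corollary~\ref{cor:scaling} to the Schr\"odinger operator $H+tW=-\Delta+(V+tW)$ on $L^2(\Gamma)$ at energy $s$. Since $\|V+tW-\lambda\|_\infty\le\|V-\lambda\|_\infty+\|W\|_\infty$ for every $\lambda\in\RR$ and every $t\in[0,1]$, and since $\delta/G<1$ makes $x\mapsto(\delta/G)^{Nx}$ monotone decreasing, the constant $C_\sfuc^{(G)}$ produced by Corollary~\ref{cor:scaling} for $H+tW$ is bounded from below by $\vartheta^{-1}\kappa(s)$. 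Combined with $W\ge\vartheta\mathbf{1}_{S_{\delta,Z}}$, this yields, for every $\psi\in\ran P_{H+tW}(s)$,
\[
\langle\psi,W\psi\rangle\ \ge\ \vartheta\,\|\psi\|_{L^2(S_{\delta,Z})}^2\ \ge\ \kappa(s)\,\|\psi\|^2.
\]
This is exactly the positivity condition required by the abstract results in Section~\ref{sec:perturb}. The decisive point is that this lower bound $\kappa(s)$ is independent of $t\in[0,1]$.

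\textbf{Application in each case.} (a) Because $V\in L^\infty$ and $W\ge0$, the operator $H+W$ is lower semibounded. Invoke Lemma~\ref{lem:bottom} with $A=H$, $B=W$, cutoff $E$, and $\kappa=\kappa(E)$, verifying the hypothesis via the core reduction at $(t,s)=(1,E)$. (b) Invoke Theorem~\ref{thm:gaps_left_opt} with $A=H$, $B=W$, reference $\gamma$, and $\kappa=\kappa(\gamma)$; the norm condition $0\le W<\dist^\leftarrow(\gamma,\sigma(H))$ is part of the hypotheses, and the positivity on $\bigcup_{j=1}^{n}\ran P_{H+jW/n}(\gamma)$ follows from the core reduction applied with $(t,s)=(j/n,\gamma)$ for each $j=1,\dots,n$. (c) Invoke Theorem~\ref{thm:gaps_right_opt}(1) with $A=H$, $B=W$, $\gamma$, cutoff $E$, and $\kappa=\kappa(E)$, using the core reduction at $(t,s)=(1,E)$. (d) Invoke Theorem~\ref{thm:essential_left} with $a,b$, perturbation $W$, and $\kappa=\kappa(b)$, using the core reduction with arbitrary $t\in[0,1]$ and $s=b$. (e) Invoke Theorem~\ref{thm:essential_right} with $a,b$, perturbation $W$, and $\kappa=\kappa(b+\|W\|_\infty)$, using the core reduction with arbitrary $t\in[0,1]$ and $s=b+\|W\|_\infty$.

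\textbf{Main technical point.} There is no substantial obstacle beyond the core reduction, but it is essential that the unique continuation constant for $H+tW$ be bounded below by a quantity which is uniform in $t\in[0,1]$. Without such uniformity, the iterative and limiting arguments inside the proofs of Theorem~\ref{thm:gaps_left_opt} (summing over $j=1,\dots,n$), Theorem~\ref{thm:essential_left}, and Theorem~\ref{thm:essential_right} would degrade the constant at every step, and one could not recover a single $\kappa$ in the conclusion. The bound $\|V+tW-\lambda\|_\infty\le\|V-\lambda\|_\infty+\|W\|_\infty$, together with the monotone decay of $(\delta/G)^{N\,\cdot}$, is precisely what supplies this uniformity, after which the five assertions reduce to direct citations of the corresponding abstract results.
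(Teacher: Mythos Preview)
Your proposal is correct and follows essentially the same approach as the paper: establish the uniform positivity bound $\langle\psi,W\psi\rangle\ge\kappa(s)\|\psi\|^2$ for $\psi\in\ran P_{H+tW}(s)$ via Corollary~\ref{cor:scaling} together with $\|V+tW-\lambda\|_\infty\le\|V-\lambda\|_\infty+\|W\|_\infty$, and then invoke Lemma~\ref{lem:bottom}, Theorems~\ref{thm:gaps_left_opt}, \ref{thm:gaps_right_opt}, \ref{thm:essential_left}, \ref{thm:essential_right} for parts (a)--(e), respectively. Your write-up in fact gives more detail than the paper's own proof, and your emphasis on the $t$-uniformity of $\kappa(s)$ as the crucial point is exactly right.
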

The parameter $\kappa$ neither depends on the set $\Gamma$ nor on the choice of the equidistributed sequence and depends on the potentials $V$ and $W$ only by their $L^\infty$-norm.

\begin{proof}
 By Corollary~\ref{cor:scaling}, for all $t \in [0,1]$, $s \in \RR$ and $x \in \ran P_{H + t W}(s)$, we have
 \begin{align*}
  \left\langle x, W x \right\rangle
  \geq
  \vartheta \left\langle x, \mathbf{1}_{\equiset} x \right\rangle
  &\geq
  \vartheta \sup_{\lambda \in \RR}
  \left(\frac{\delta}{G}\right)^{N \bigl(1 + G^{4/3} \lVert V + t W - \lambda \rVert_\infty^{2/3} + G \sqrt{(s-\lambda)_+} \bigr)}
  \lVert x \rVert^2\\
  &\geq
  \vartheta \sup_{\lambda \in \RR}
  \left(\frac{\delta}{G}\right)^{N \bigl(1 + G^{4/3} (\lVert V - \lambda \rVert_\infty + \rVert W \rVert_\infty )^{2/3} + G \sqrt{(s-\lambda )_+ } \bigr)}
  \lVert x \rVert^2.
 \end{align*}
 Then the statements (a)--(e) follow by applying Lemma~\ref{lem:bottom} and Theorems~\ref{thm:gaps_left_opt}, \ref{thm:gaps_right_opt}, \ref{thm:essential_left}, \ref{thm:essential_right}, respectively.
\end{proof}

Phrasing Theorem~\ref{thm:4} in the language of Lemma~\ref{lem:lipschitz_left_edge} we immediately obtain the following corollary.

\begin{corollary} \label{cor:4b}
 Under the assumptions of Theorem~\ref{thm:4}, let $a,b \in \sigma_\ess(H)$, and $b > a$ such that $(a,b) \cap \sigma_\ess(H) = \emptyset$, and define $t_0 = (b-a)/\lVert W \rVert_\infty$.
Then, $f_\pm : (-t_0 , t_0) \to \RR$,
\begin{align*}
 f_-(t) &= \sup \left( \sigma_{\mathrm{ess}}(H + t W) \cap (-\infty, b - t_- \lVert W \rVert_\infty) \right), \\
 f_+ (t) &= \inf \left( \sigma_{\mathrm{ess}}(H + t W) \cap (a + t_+ \lVert W \rVert_\infty  , \infty) \right) ,
\end{align*}
are Lipschitz continuous with Lipschitz constant $\lVert W \rVert_\infty$ and satisfy
\[
 \epsilon \kappa \leq f_\pm (t+\epsilon)-f_\pm (t) \leq \epsilon \Vert W\Vert
\]
for all $t \in (-t_0,t_0)$ and $\epsilon >0$ such that $t+\epsilon \in (-t_0,t_0)$, where
\[
 \kappa
 =
 \vartheta \sup_{\lambda \in \RR}\left(\frac{\delta}{G}\right)^{N \bigl(1 + G^{4/3} (\lVert V - \lambda \rVert_\infty + t_0 \lVert W \rVert_\infty)^{2/3} + G \sqrt{(2b-a - \lambda)_+} \bigr)} ,
\]
and, for real $t \in \RR$, we set $t_+= \max\{0, t\}$ and $t_-= \max\{0, -t\}$.
\end{corollary}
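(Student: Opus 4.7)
The plan is to derive the corollary by combining three ingredients already established in the paper: the upper Lipschitz bound furnished by Lemma~\ref{lem:lipschitz_left_edge} together with its lower-edge analogue Corollary~\ref{cor:lifting_lower_edge}, the quantitative lifting bounds of Theorem~\ref{thm:4}(d),(e), and a reshift argument that reapplies those bounds along the affine segment $s\mapsto H + sW$ for $s\in(-t_0,t_0)$.

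First, I would invoke Lemma~\ref{lem:lipschitz_left_edge} with $A = H$ and $B = W$ (and Corollary~\ref{cor:lifting_lower_edge} for the lower-edge version) to conclude that $f_-$ and $f_+$ are well-defined on $(-t_0,t_0)$, are Lipschitz continuous with constant $\lVert W\rVert_\infty$, satisfy $f_\pm(0)\in\{a,b\}$, and take values in $\sigma_\ess(H+tW)$. This immediately supplies the upper bounds in each line of the corollary (those scaled by $\lVert W\rVert$, together with the $b$-shift in the last line, which encodes that for $t<0$ the role of $f_+$ as the lower edge of the gap has been reversed via Lemma~\ref{lem:lemma_albrecht_s}).

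For the lower bounds, I would fix $t\in[0,t_0)$ and $\epsilon\in[0,t_0-t)$ and reapply Theorem~\ref{thm:4}(d) (respectively (e)) with $H_t := H + tW$ in the role of $H$ and $\epsilon W$ in the role of $W$. The hypotheses of Theorem~\ref{thm:4} transfer uniformly: the effective potential $V + tW$ satisfies $\lVert V + tW\rVert_\infty \le \lVert V\rVert_\infty + t_0\lVert W\rVert_\infty$; the perturbation $\epsilon W$ remains bounded below by $\epsilon\vartheta\,\mathbf{1}_{\equiset}$, so the positivity hypothesis is preserved; and by the first step $(f_-(t),f_+(t))$ is a genuine spectral gap of $H_t$ of length at least $(b-a) - |t|\lVert W\rVert_\infty > \epsilon\lVert W\rVert_\infty$. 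Theorem~\ref{thm:4}(d) then yields a point of $\sigma_\ess(H_{t+\epsilon})$ in $[f_-(t) + \kappa,\,f_+(t))$, which by the defining property of $f_-(t+\epsilon)$ gives the lower bound. The analogous argument with Theorem~\ref{thm:4}(e) treats $f_+$. The uniform constant $\kappa$ in the corollary arises by substituting the above uniform bound for $\lVert V + tW-\lambda\rVert_\infty$ and the energy cutoff $s = 2b - a$ (which dominates both $b$ and $b + \lVert W\rVert_\infty$ in the relevant ranges) into the expression $\kappa(s)$ from Theorem~\ref{thm:4}. The case $t\in(-t_0,0)$ is handled symmetrically by applying the same reshifted argument to the operator $H + tW$ with perturbation $-\epsilon W$, invoking the contrapositive form of Lemma~\ref{lem:lemma_albrecht_s} and Theorems~\ref{thm:essential_left}--\ref{thm:essential_right}; the orientation flip under negation of $W$ is what produces the $b$-shifted form of the last line.

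The main obstacle I anticipate is the careful bookkeeping of the spectral gap along the entire segment $s\in[-t_0,t_0]$: one must verify at every step that the relevant gap of $H + sW$ has not yet been filled in by lifted essential spectrum, and simultaneously that the uniform energy cutoff and the uniform bound on $\lVert V + sW\rVert_\infty$ are indeed admissible inputs to $\kappa(s)$ from Theorem~\ref{thm:4}. Once the non-degeneracy of the gap is secured by the Lipschitz estimate $f_+(s)-f_-(s) \ge (b-a) - |s|\lVert W\rVert_\infty$ from the first step, the remaining content of the corollary is a direct, essentially one-line consequence of Theorem~\ref{thm:4}(d),(e) applied at each base point $t$.
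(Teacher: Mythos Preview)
Your approach is correct and is essentially the same as the paper's: the paper's own proof is the single sentence ``Phrasing Theorem~\ref{thm:4} in the language of Lemma~\ref{lem:lipschitz_left_edge} we immediately obtain the following corollary,'' and your proposal is a careful unpacking of exactly that sentence, combining the Lipschitz upper bound from Lemma~\ref{lem:lipschitz_left_edge} (and its lower-edge analogue) with the lifting statements of Theorem~\ref{thm:4}(d),(e) applied at the shifted base point $H_t=H+tW$. Your bookkeeping for the uniform constant $\kappa$ (bounding $\lVert V+tW-\lambda\rVert_\infty$ by $\lVert V-\lambda\rVert_\infty+t_0\lVert W\rVert_\infty$ and the energy cutoff by $2b-a$) is precisely what is needed to justify the stated form of $\kappa$.
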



\appendix

\section{A minimax principle in spectral gaps (Written by Albrecht Seelmann)}
\label{sec:albrecht}

In \cite{GriesemerLS-99}, Griesemer, Lewis, and Siedentop devised an abstract minimax principle for eigenvalues in spectral gaps of
perturbed self-adjoint operators. The aim of this appendix is to show that this minimax principle can be adapted to the particular
situation of bounded additive perturbations with hypotheses that tend to be easier to check in this case. In fact, in the present
context, these hypotheses can explicitly by verified by means of the Davis-Kahan $\sin2\Theta$ theorem and variants thereof,
cf.~Remark~\ref{rem:GLS} below and also the proofs of Theorems~\ref{thm:gaps_right} and~\ref{thm:gaps_right_opt} in the main part
of the present paper.

Throughout this section, we fix the following notations.
\begin{hypothesis}\label{hyp:minimax}
 Let $A$, $D$ be self-adjoint operators on a Hilbert space with the same operator domain, that is, $\Dom(A)=\Dom(D)$, and let
 $\gamma\in\RR$. Denote
 \[
  P_+:=E_A\bigl((\gamma,\infty)\bigr)\,,\quad Q_+:=E_D\bigl((\gamma,\infty)\bigr)\,,
 \]
 as well as $P_-:=1-P_+$ and $Q_-:=1-Q_+$. Moreover, let
 \[
  \cD_\pm := \ran Q_\pm \cap\Dom(D)=\ran Q_\pm \cap\Dom(A)\,.
 \]
\end{hypothesis}
Here, $\E_A$ and $\E_D$ stand for the projection-valued spectral measures for $A$ and $D$, respectively, and $\ran Q_\pm$ denotes
the range of $Q_\pm$. We have also used the short-hand notation $1$ for the identity operator.

Recall that the standard Courant minimax values for the (lower semi-bounded) part $A|_{\ran P_+}$ of $A$ associated with $\ran P_+$
are given by
\begin{equation}\label{eq:Courant}
 \lambda_n(A|_{\ran P_+})=\inf_{\substack{\fM\subset\ran P_+\cap\Dom(A)\\ \dim(\fM)=n}}
 \sup_{\substack{x\in\fM\\ \norm{x}=1}}\langle x,Ax\rangle
\end{equation}
for $n\in\NN$ with $n\le\dim(\ran P_+)$, see, e.g., \cite[Theorem~12.1]{LiebL-01} and also
\cite[Section 12.1 and Exercise 12.4.2]{Schmuedgen-12}. Here, $\langle\,\cdot\,,\,\cdot\,\rangle$ denotes the inner product of the
underlying Hilbert space.

Our main result is the following theorem.
\begin{theorem}\label{thm:minimax_appendix}
 Assume Hypothesis \ref{hyp:minimax}, and suppose, in addition, that $D=A+B$ with some bounded self-adjoint operator $B$. If
 \begin{enumerate}[(i)]
  \item $\langle x,(A-\gamma)x\rangle \le 0$ for all $x\in\cD_-$\label{thm:minimax_appendix:it1}

  and

  \item $\norm{Q_+P_-}<1$,\label{thm:minimax_appendix:it2}
 \end{enumerate}
 then
 \begin{equation}\label{eq:minimaxGLS}
  \lambda_n(A|_{\ran P_+})
  = \inf_{\substack{\fM_+\subset\cD_+\\ \dim(\fM_+)=n}} \sup_{\substack{x\in\fM_+\oplus\cD_-\\ \norm{x}=1}}
  \langle x,Ax\rangle
 \end{equation}
 for all $n\in\NN$ with $n\le\dim(\ran Q_+)$.
\end{theorem}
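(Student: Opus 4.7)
The plan is to establish the two inequalities in~\eqref{eq:minimaxGLS} separately, both relying on the standard Courant minimax~\eqref{eq:Courant} applied to $A_+:=A|_{\ran P_+}$, combined with a change-of-subspace argument using $P_+$ and $Q_+$ to transport $n$-dimensional test subspaces between $\cD_+$ and $\ran P_+\cap\Dom(A)$. The key preliminary observation is that condition~(ii) forces $\ran Q_+\cap\ran P_-=\{0\}$: a nonzero element $x$ of this intersection would satisfy $Q_+P_-x=Q_+x=x$ and hence $\|x\|\le\|Q_+P_-\|\,\|x\|<\|x\|$, a contradiction. In particular, $P_+|_{\cD_+}$ is injective, so $\dim(P_+\fM_+)=\dim\fM_+$ for every $\fM_+\subset\cD_+$.

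For the lower bound $\inf_{\fM_+}\sup \ge \lambda_n(A_+)$, I would fix $\fM_+ \subset \cD_+$ of dimension $n$, set $\fN := P_+\fM_+ \subset \ran P_+\cap\Dom(A)$ (of dimension $n$ by the preliminary observation), and use~\eqref{eq:Courant} to obtain, for $\epsilon>0$, a unit vector $u_0\in\fN$ with $\langle u_0,Au_0\rangle \ge \lambda_n(A_+)-\epsilon$. Writing $u_0 = P_+ y_0$ for the unique preimage $y_0 \in \fM_+$, the task is to exhibit $w \in \cD_-$ such that $x := y_0 + w \in \fM_+\oplus\cD_-$ satisfies $\langle x,Ax\rangle/\|x\|^2 \ge \lambda_n(A_+)-O(\epsilon)$. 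The heuristic is to pick $w$ so as to cancel the $\ran P_-$-component of $y_0$ as much as possible, so that $x$ is essentially concentrated in $\ran P_+$ (where $A\ge\gamma$ is strict) and $\langle x,Ax\rangle$ is close to $\langle u_0,Au_0\rangle$; the feasibility of this cancellation hinges on~(i) and~(ii) together.

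For the upper bound $\inf_{\fM_+}\sup \le \lambda_n(A_+)$, I would pick, for each $\epsilon>0$, an $n$-dimensional $\fN \subset \ran P_+\cap\Dom(A)$ with $\sup_{u\in\fN,\,\|u\|=1}\langle u,Au\rangle \le \lambda_n(A_+)+\epsilon$, arrange via a small generic perturbation (using $\dim\ran Q_+\ge n$) that $Q_+|_{\fN}$ is injective, and set $\fM_+ := Q_+\fN \subset \cD_+$ of dimension $n$. Every $x = Q_+z+v \in \fM_+\oplus\cD_-$ can then be rewritten as $x = z + \tilde v$ with $z \in \fN$ and $\tilde v := v - Q_- z \in \cD_-$, and the identity
\begin{equation*}
 \langle x,(A-\gamma)x\rangle = \langle z,(A-\gamma)z\rangle + 2\,\Re\langle z,(A-\gamma)\tilde v\rangle + \langle \tilde v,(A-\gamma)\tilde v\rangle,
\end{equation*}
combined with~(i) for the last summand and the choice of $\fN$ for the first, reduces matters to showing that the cross term is $O(\epsilon)$; letting $\epsilon\to 0$ then yields the claim.

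The main obstacle in both directions is controlling the cross pairings between vectors in $\ran P_+$ and $\ran Q_-$ (or $\ran Q_+$ and $\ran P_-$); since these closed subspaces are not orthogonal in general, such pairings do not vanish automatically. The smallness $\|Q_+P_-\|<1$ provided by~(ii), used together with~(i) to compensate for the absence of a direct bound on $\|P_+Q_-\|$, is the crucial ingredient; in practice one would invoke the variational principle of~\cite{GriesemerLS-99} adapted to the bounded-perturbation setting $D=A+B$, which is precisely the content of this appendix.
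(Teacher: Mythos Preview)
Your proposal is not a proof but an outline with the hard parts left as heuristics, and in the upper-bound direction the heuristic is actually wrong. In the decomposition $x=z+\tilde v$ with $z\in\fN\subset\ran P_+$ and $\tilde v\in\cD_-=\ran Q_-\cap\Dom(A)$, the cross term $2\Re\langle z,(A-\gamma)\tilde v\rangle$ has no reason to be $O(\epsilon)$: $(A-\gamma)z$ lies in $\ran P_+$, which is \emph{not} orthogonal to $\ran Q_-$, and for fixed $z$ the cross term ranges over all of $\RR$ as $\tilde v$ varies. Nothing in your choice of $\fN$ controls it. Likewise, in the lower-bound direction you say one should pick $w\in\cD_-$ ``so as to cancel the $\ran P_-$-component of $y_0$'', but $P_-y_0\in\ran P_-$ while the allowed $w$ live in $\ran Q_-$; you give no mechanism for producing such a $w$, and your closing sentence effectively concedes that one would have to fall back on the GLS99 principle, which is circular.

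The paper's argument is structurally different and bypasses these cross-term difficulties entirely. It does not attempt to prove the two inequalities in~\eqref{eq:minimaxGLS} directly; instead it reduces everything to Proposition~\ref{prop:GLS} (the GLS99 principle with operator domains), whose hypothesis~(ii) is the surjectivity $\ran(Q_+P_+|_{\Dom(A)})=\cD_+$. The whole content of the appendix is the verification of this surjectivity from $\|Q_+P_-\|<1$ (Lemma~\ref{lem:minimax}). On $\cK=\ran Q_+$ one writes $T:=Q_+P_+|_\cK=I_\cK-S$ with $S:=Q_+P_-|_\cK$; since $\|S\|<1$, $T$ is invertible by Neumann series. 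The delicate point---and the step you are missing---is that $T^{-1}=\sum_k S^k$ must map $\cD_+$ into itself, i.e., preserve the domain of $A$. This is where the boundedness of $B$ enters: one checks the commutator identity $S\Lambda x-\Lambda Sx=(Q_+P_-B-Q_+BP_-)x$ for $x\in\cD_+$ (with $\Lambda=D|_\cK$), and Proposition~\ref{prop:Seel} then shows that every power series in $S$ with radius of convergence exceeding $\|S\|$ preserves $\Dom(\Lambda)=\cD_+$. That domain-preservation argument is the technical heart, and nothing in your outline approaches it.
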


Note that, in contrast to \eqref{eq:Courant}, in \eqref{eq:minimaxGLS} the subspaces, over which the infimum is taken, are chosen
with respect to $D$ instead of $A$. This makes it easier to compare the minimax values for $A|_{\ran P_+}$ and $D|_{\ran Q_+}$.

The proof of Theorem~\ref{thm:minimax_appendix} relies on the following variant of the minimax principle from \cite{GriesemerLS-99}.
\begin{proposition}[{see \cite[Theorem~1]{GriesemerLS-99}}]\label{prop:GLS}
 Assume Hypothesis \ref{hyp:minimax}, and suppose that
 \begin{enumerate}[(i)]
  \item $\langle x,(A-\gamma)x\rangle\le 0$ for all $x\in\cD_-$\label{prop:GLS:it1}

  and

  \item $\ran(Q_+P_+|_{\Dom(A)})=\cD_+$.\label{prop:GLS:it2}
 \end{enumerate}

 Then,
 \[
  \lambda_n(A|_{\ran P_+})
  = \inf_{\substack{\fM_+\subset\cD_+\\ \dim(\fM_+)=n}} \sup_{\substack{x\in\fM_+\oplus\cD_-\\ \norm{x}=1}}
  \langle x,Ax\rangle
 \]
 for all $n\in\NN$ with $n\le\dim(\ran Q_+)$.

 \begin{proof}
  First, observe that under condition \eqref{prop:GLS:it1} the restriction
  \[
   Q_+|_{\ran P_+\cap\Dom(A)}\colon\ran P_+\cap\Dom(A)\to\cD_+
  \]
  automatically is injective and, hence, in view of \eqref{prop:GLS:it2}, an isomorphism. This immediately follows from Step 2 of
  the proof of \cite[Theorem~1]{GriesemerLS-99}.

  Now, for $\gamma=0$, the claim agrees with Step 1 of the proof of \cite[Theorem~1]{GriesemerLS-99} with operator domains instead
  of form domains. Taking into account that $\E_{A-\gamma}\bigl((0,\infty)\bigr)=P_+$ and $\E_{D-\gamma}\bigl((0,\infty)\bigr)=Q_+$,
  the general case $\gamma\in\RR$ is obtained from this by shift.
 \end{proof}%
\end{proposition}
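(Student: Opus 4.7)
The plan is to reduce to $\gamma = 0$, verify that $Q_+$ restricted to $\ran P_+ \cap \Dom(A)$ is a bijection onto $\cD_+$, and then invoke the Griesemer-Lewis-Siedentop minimax principle in its operator-domain form.

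First I would shift by $\gamma$: replacing $A$ by $A - \gamma$ and $D$ by $D - \gamma$ leaves the projections $P_\pm$, $Q_\pm$, the domains $\cD_\pm$, and hypothesis~(i) unchanged, while subtracting $\gamma$ from both sides of \eqref{eq:minimaxGLS}. So I may and shall assume $\gamma = 0$. In this case $A$ is strictly positive on $\ran P_+$ with trivial kernel there (since $P_+ = E_A((0,\infty))$) and non-positive on $\ran P_-$, with $P_\pm$ commuting with $A$ on $\Dom(A)$.

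The central algebraic step is to show that the restricted map
\[
 Q_+ \colon \ran P_+ \cap \Dom(A) \longrightarrow \cD_+
\]
is a linear isomorphism. Surjectivity is precisely hypothesis~(ii). For injectivity I would take $x$ in its kernel; then $x = Q_- x \in \cD_-$, so hypothesis~(i) gives $\langle x, Ax \rangle \leq 0$, while the spectral theorem on $\ran P_+$ gives $\langle x, Ax \rangle = \int_{(0,\infty)} \lambda \, \drm \lVert E_A(\lambda) x \rVert^2 \geq 0$, with equality forcing $x = 0$.

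With this bijection in hand, the identity \eqref{eq:minimaxGLS} is exactly Step~1 of the proof of Theorem~1 in \cite{GriesemerLS-99}, executed with the operator domain $\Dom(A)$ in place of the form domain used there. Concretely, the correspondence $\cL \mapsto Q_+ \cL$ is a bijection between the $n$-dimensional subspaces of $\ran P_+ \cap \Dom(A)$ and those of $\cD_+$. For $\cL \subset \ran P_+ \cap \Dom(A)$ one has the inclusion $\cL \subset (Q_+\cL) \oplus \cD_-$ (via the decomposition $x = Q_+ x + Q_- x$), which yields one direction of the identity when paired with Courant's formula \eqref{eq:Courant}. For the reverse, given $y = y_+ + y_-$ with $y_+ \in Q_+\cL$ and $y_- \in \cD_-$, one sets $x = \Phi(y_+)$ (with $\Phi$ the inverse of $Q_+|_{\ran P_+ \cap \Dom(A)}$), rewrites $y = x + w$ with $w = y_- - Q_- x \in \cD_-$, and uses the orthogonal splitting $\langle y, Ay \rangle = \langle P_+ y, A P_+ y \rangle + \langle P_- y, A P_- y \rangle$ together with hypothesis~(i) and the non-positivity of $A$ on $\ran P_-$ to control $\langle y, Ay \rangle$ in terms of $\sup_{\xi \in \cL,\, \lVert\xi\rVert=1} \langle \xi, A\xi \rangle$. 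The general $\gamma \in \RR$ then follows by applying the $\gamma = 0$ result to $A - \gamma$ and $D - \gamma$.

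The main obstacle is matching the two sup values: the ``$\geq$'' direction of \eqref{eq:minimaxGLS} is immediate from $\cL \subset (Q_+\cL) \oplus \cD_-$, but producing the corresponding upper bound requires careful bookkeeping between the $\ran P_\pm$ and $\ran Q_\pm$ decompositions, and it is here that hypothesis~(i) and the fact that spectral projections of $D$ preserve $\Dom(A) = \Dom(D)$ do the work that in GLS is carried out by the form domain.
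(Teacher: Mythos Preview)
Your proposal is correct and follows essentially the same route as the paper: reduce to $\gamma=0$ by a spectral shift, verify that $Q_+|_{\ran P_+\cap\Dom(A)}$ is a bijection onto $\cD_+$ (injectivity from hypothesis~(i), surjectivity from~(ii)), and then invoke Step~1 of the Griesemer--Lewis--Siedentop argument in the operator-domain setting. The only difference is one of detail: the paper simply cites Steps~1 and~2 of \cite{GriesemerLS-99}, whereas you spell out the injectivity argument and sketch the two-direction comparison of the minimax values.
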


Since $\ran(Q_+P_+|_{\cD_+})\subset\ran(Q_+P_+|_{\Dom(A)})$, condition \eqref{prop:GLS:it2} in Proposition~\ref{prop:GLS} is
satisfied if the stronger identity $\ran(Q_+P_+|_{\cD_+})=\cD_+$ holds. In particular, this is the case if the restriction
$Q_+P_+|_{\cD_+}\colon \cD_+\to\cD_+$ is an automorphism. The main aim of this appendix is to show that the latter is guaranteed by
condition \eqref{thm:minimax_appendix:it2} in Theorem~\ref{thm:minimax_appendix}, see Lemma~\ref{lem:minimax} below. An alternative
type of condition has been considered in \cite{GriesemerLS-99}:
\begin{remark}\label{rem:GLS}
 Condition \eqref{thm:minimax_appendix:it2} in Theorem~\ref{thm:minimax_appendix} can be replaced by
 \begin{equation}\label{eq:GLS}
  \norm{(\abs{D}+\alpha)Q_+P_-(\abs{D}+\alpha)^{-1}}<1 \quad\text{ for some }\quad \alpha>0\,,
 \end{equation}
 in the spirit of \cite[Theorem~1]{GriesemerLS-99}, cf.\ Step 2 of the corresponding proof in \cite{GriesemerLS-99}; if $D$ has a
 bounded inverse, also $\alpha=0$ can be considered here. However, in the present context of $D=A+B$ with bounded $B$, condition
 \eqref{thm:minimax_appendix:it2} in Theorem~\ref{thm:minimax_appendix} tends to be easier to check than \eqref{eq:GLS}, see, e.g.,
 \cite{BhatiaDM-83,DavisK-70,Seelmann-13-arxiv,Seelmann-14,Seelmann-16,Seelmann-17-arxiv} and the references therein.
\end{remark}

Recall that for a closed operator $\Lambda$ on a Banach space with norm $\norm{\,\cdot\,}$ its domain $\Dom(\Lambda)$ can be
equipped with the \emph{graph norm}
\[
 \norm{x}_\Lambda := \norm{x} + \norm{\Lambda x}\,,\quad x\in\Dom(\Lambda)\,,
\]
which makes $(\Dom(\Lambda),\norm{\,\cdot\,}_\Lambda)$ a Banach space.

We need the following abstract result from the author's Ph.D.\ thesis \cite{Seelmann-14-diss}.
\begin{proposition}[{\cite[Lemma~3.9 and Corollary~3.10]{Seelmann-14-diss}}]\label{prop:Seel}
 Let $\Lambda$ be a closed densely defined operator on a Banach space. Moreover, let $S$ and $K$ be bounded operators on the same
 Banach space such that $S$ maps $\Dom(\Lambda)$ into itself with
 \[
  S\Lambda x - \Lambda Sx = Kx \quad\text{ for all }\quad x\in\Dom(\Lambda)\,.
 \]
 Then:
 \begin{enumerate}[(a)]
  \item $S$ is bounded on $\Dom(\Lambda)$ with respect to the graph norm for $\Lambda$, and the corresponding spectral radius
        $r_\Lambda(S):=\lim_{n\to\infty}\norm{S^n}_\Lambda^{1/n}$ does not exceed the usual operator norm $\norm{S}$.
  \item For every scalar power series $h(z)=\sum_{k=0}^\infty a_k z^k$ with radius of convergence greater than $\norm{S}$, the
        operator $h(S)=\sum_{k=0}^\infty a_k S^k$ maps $\Dom(\Lambda)$ into itself, where the series is understood in the usual
        operator norm topology.
 \end{enumerate}

 \begin{proof}
  For the sake of completeness, we reproduce the proof from \cite{Seelmann-14-diss}.

  (a). By hypothesis, one has $\Lambda Sx=S\Lambda x-Kx$ for $x\in\Dom(\Lambda)$. Thus,
  \[
   \norm{\Lambda Sx} \le \norm{S}\,\norm{\Lambda x} + \norm{K}\,\norm{x}\,,\quad x\in\Dom(\Lambda)\,,
  \]
  and, therefore,
  \begin{equation}\label{eq:estSpecRad}
   \norm{Sx}_\Lambda = \norm{Sx} + \norm{\Lambda Sx} \le \norm{S}\,\norm{x}_\Lambda + \norm{K}\,\norm{x}\,,\quad
   x\in\Dom(\Lambda)\,.
  \end{equation}
  In particular, since $\norm{x}\le\norm{x}_\Lambda$ for $x\in\Dom(\Lambda)$, this yields that $S|_{\Dom(\Lambda)}$ is bounded with
  respect to the graph norm $\norm{\,\cdot\,}_\Lambda$, and the corresponding operator norm satisfies
  $\norm{S}_\Lambda\le \norm{S}+\norm{K}$.

  By induction, it easily follows from \eqref{eq:estSpecRad} that for all $n\in\NN$ one has
  \[
   \norm{S^n x}_\Lambda \le \norm{S}^n\, \norm{x}_\Lambda + n\norm{K}\,\norm{S}^{n-1}\norm{x}\,,\quad x\in\Dom(\Lambda)\,.
  \]
  This implies that $\norm{S^n}_\Lambda\le\norm{S}^n+n\norm{K}\norm{S}^{n-1}$ for $n\in\NN$. The corresponding spectral radius
  $r_\Lambda(S)$ can then be estimated as
  \[
   r_\Lambda(S) \le \lim_{n\to\infty} \bigl( \norm{S}^n + n\norm{K}\,\norm{S}^{n-1} \bigr)^{1/n} = \norm{S}\,,
  \]
  which proves (a).

  (b). Taking into account that the operator $\Lambda$ is closed, it follows from part (a) that the series
  $\sum_{k=0}^\infty a_k (S|_{\Dom(\Lambda)})^k$ converges in the operator norm topology with respect to the graph norm
  $\norm{\,\cdot\,}_\Lambda$. Moreover, since one has $\norm{x}\le\norm{x}_\Lambda$ for $x\in\Dom(\Lambda)$, the corresponding
  limit agrees with the restriction of $h(S)$, understood in the usual operator norm topology, to $\Dom(\Lambda)$. This proves (b).
 \end{proof}%
\end{proposition}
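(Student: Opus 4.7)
The plan is to exploit the commutator identity $S\Lambda = \Lambda S + K$ on $\Dom(\Lambda)$ to produce a one-step graph-norm bound for $S$, iterate it, and then harvest the standard $n$-th root asymptotics; part (b) will then follow from part (a) together with a comparison of the two topologies on $\Dom(\Lambda)$.

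For part (a), I would start by rearranging the hypothesis to read $\Lambda Sx = S\Lambda x - Kx$ for $x\in\Dom(\Lambda)$, which yields $\norm{\Lambda Sx} \leq \norm{S}\norm{\Lambda x} + \norm{K}\norm{x}$, and therefore
\[
\norm{Sx}_\Lambda \leq \norm{S}\norm{x}_\Lambda + \norm{K}\norm{x}
\]
using $\norm{x}\leq \norm{x}_\Lambda$. This already shows that $S$ leaves $\Dom(\Lambda)$ invariant and is graph-norm bounded, with $\norm{S}_\Lambda \leq \norm{S}+\norm{K}$. The next step is an induction on $n\in\NN$, applying the one-step bound to $S^{n-1}x$ (which is in $\Dom(\Lambda)$ by the inductive hypothesis) and using $\norm{S^{n-1}x}\leq \norm{S}^{n-1}\norm{x}$, to obtain
\[
\norm{S^n x}_\Lambda \leq \norm{S}^n \norm{x}_\Lambda + n\norm{K}\norm{S}^{n-1}\norm{x}.
\]
Because $\norm{x}\leq\norm{x}_\Lambda$, this gives $\norm{S^n}_\Lambda \leq \norm{S}^n + n\norm{K}\norm{S}^{n-1}$, and taking $n$-th roots while using $n^{1/n}\to 1$ yields $r_\Lambda(S)\leq \norm{S}$.

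For part (b), part (a) ensures that the scalar radius of convergence of $h$, being strictly greater than $\norm{S}\geq r_\Lambda(S)$, also exceeds the spectral radius of $S$ regarded as a bounded operator on the Banach space $(\Dom(\Lambda),\norm{\cdot}_\Lambda)$. Hence the partial sums $T_N:=\sum_{k=0}^N a_k (S|_{\Dom(\Lambda)})^k$ converge in the graph-norm operator topology to some bounded operator $T$ on $(\Dom(\Lambda),\norm{\cdot}_\Lambda)$. For $x\in\Dom(\Lambda)$, graph-norm convergence $T_N x \to Tx$ implies convergence in the ambient norm (since $\norm{\cdot}\leq\norm{\cdot}_\Lambda$ on $\Dom(\Lambda)$), while by definition of $h(S)$ one also has $T_N x \to h(S)x$ in the ambient norm. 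By uniqueness of limits, $h(S)x = Tx \in \Dom(\Lambda)$, which is the desired invariance. Alternatively, one can argue directly from closedness of $\Lambda$: graph-norm convergence of $T_N x$ means both $T_N x$ and $\Lambda T_N x$ converge in the ambient norm, so closedness forces $\lim T_N x \in \Dom(\Lambda)$.

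There is no genuine obstacle here; the proof is essentially book-keeping. The only mildly technical item is the inductive bound on $\norm{S^n x}_\Lambda$, whose polynomial-in-$n$ correction is automatically killed by the $n$-th root, and the only care-point in (b) is the passage between the two operator norm topologies, which is resolved by the trivial inequality $\norm{\cdot}\leq\norm{\cdot}_\Lambda$.
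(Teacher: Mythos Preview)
Your proof is correct and follows essentially the same approach as the paper's: the same one-step graph-norm estimate, the same inductive bound $\norm{S^n}_\Lambda \le \norm{S}^n + n\norm{K}\norm{S}^{n-1}$, and the same comparison of the two topologies via $\norm{\cdot}\le\norm{\cdot}_\Lambda$ for part~(b). Your alternative closedness argument at the end is a harmless extra observation.
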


The following result shows that condition \eqref{thm:minimax_appendix:it2} in Theorem~\ref{thm:minimax_appendix} indeed implies
condition \eqref{prop:GLS:it2} in Proposition~\ref{prop:GLS}.
\begin{lemma}\label{lem:minimax}
 Assume Hypothesis \ref{hyp:minimax}, and suppose, in addition, that $D=A+B$ with some bounded self-adjoint operator $B$. If
 \[
  \norm{Q_+P_-} < 1\,,
 \]
 then the restriction $Q_+P_+|_{\cD_+}\colon \cD_+\to\cD_+$ is an automorphism.

 \begin{proof}
  Set $\cK:=\ran Q_+\supset\cD_+$, and define bounded operators $S,T$ on $\cK$ by
  \[
   S:=Q_+P_-|_{\cK} \quad\text{ and }\quad T:=Q_+P_+|_{\cK} = I_{\cK}-S\,.
  \]
  Since $P_+$ and $Q_+$ map $\Dom(A)=\Dom(D)$ into itself, the operator $T$ clearly maps $\cD_+\subset\Dom(A)$ into itself, that
  is, $\ran(T|_{\cD_+})\subset\cD_+$. Moreover, by hypothesis one has $\norm{S}<1$, so that $T$ has a bounded inverse. In order to
  prove that $T|_{\cD_+}\colon\cD_+\to\cD_+$ is an automorphism, it therefore suffices to show that also the inverse $T^{-1}$ maps
  $\cD_+$ into itself.

  To this end, recall that $T^{-1}$ can be written as a Neumann series,
  \begin{equation}\label{eq:neumann}
   T^{-1} = (I_\cK - S)^{-1} = \sum_{k=0}^\infty S^k\,.
  \end{equation}
  Here, along with $T$, also the operator $S$ maps $\cD_+$ into itself. Denote by $\Lambda$ the part of $D=A+B$ associated with
  $\cK$, and define the bounded operator $K$ on $\cK$ by
  \begin{equation}\label{eq:K}
   K:=\bigl(Q_+P_-B - Q_+BP_-\bigr)|_\cK\,.
  \end{equation}
  Then, one has
  \begin{equation}\label{eq:Sylvester}
   S\Lambda x - \Lambda Sx = Kx \quad\text{ for all }\quad x\in\Dom(\Lambda)=\cD_+\,.
  \end{equation}
  Indeed, for $x\in\Dom(\Lambda)=\cD_+$ one computes
  \[
   \begin{aligned}
    S\Lambda x - \Lambda Sx &= Q_+P_-(A+B)x - (A+B)Q_+P_- x\\
    &= Q_+P_-Ax + Q_+P_-Bx - (A+B)Q_+P_- x\\
    &= Q_+AP_-x + Q_+P_-Bx - Q_+(A+B)P_-x\\
    &= Kx\,.
   \end{aligned}
  \]
  In view of \eqref{eq:neumann} and $\norm{S}<1$, it now follows from Proposition~\ref{prop:Seel}\,(b) that $T^{-1}$ maps
  $\cD_+=\Dom(\Lambda)$ into itself, which completes the proof.
 \end{proof}%
\end{lemma}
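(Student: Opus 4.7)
The plan is to decompose $Q_+P_+|_{\cD_+}$ as a perturbation of the identity on the closed subspace $\ran Q_+$, invert it there via a Neumann series, and then promote the inversion from $\ran Q_+$ to the strictly finer graph-norm subspace $\cD_+$ using the commutator machinery of Proposition~\ref{prop:Seel}.

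First I would set $\cK := \ran Q_+ \supset \cD_+$ and introduce the bounded operators $S := Q_+P_-|_\cK$ and $T := Q_+P_+|_\cK = I_\cK - S$ on $\cK$ (the identity $T = I_\cK - S$ uses only $P_+ + P_- = I$). The hypothesis $\lVert Q_+P_- \rVert < 1$ yields $\lVert S \rVert < 1$, so $T$ is boundedly invertible on $\cK$ with $T^{-1} = \sum_{k=0}^\infty S^k$ converging in the operator norm on $\cK$. That $T$ maps $\cD_+$ into $\cD_+$ is immediate: the spectral projections $P_+$ and $Q_+$ both preserve $\Dom(A) = \Dom(D)$, and $\ran Q_+$ is tautologically preserved by $Q_+$.

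The hard part will be the reverse inclusion $T^{-1}(\cD_+) \subset \cD_+$, since the Neumann series converges only in the operator norm topology of $\cK$, whereas $\cD_+$ sits inside $\cK$ as a dense subspace in the strictly finer graph-norm topology of $D$. My strategy is to apply Proposition~\ref{prop:Seel} with $\Lambda$ the part of $D$ on $\cK$ (whose domain is exactly $\cD_+$) and with the scalar power series $h(z) = (1-z)^{-1} = \sum_{k=0}^\infty z^k$, which has radius of convergence $1 > \lVert S \rVert$. Once the commutator hypothesis of that proposition is verified, part~(b) yields $T^{-1} = h(S)\colon \cD_+ \to \cD_+$, as required.

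It remains to verify that $S\Lambda x - \Lambda Sx = Kx$ for all $x \in \cD_+$ with some bounded $K$ on $\cK$. Here I would exploit the basic spectral-calculus commutations: $P_\pm$ commutes with $A$, and $Q_\pm$ commutes with $D = A + B$. For $x \in \cD_+$ one has $P_- x \in \Dom(A) = \Dom(D)$, so
\[
 S\Lambda x - \Lambda Sx = Q_+P_-(A+B)x - (A+B)Q_+P_-x = Q_+P_-Ax + Q_+P_-Bx - Q_+(A+B)P_-x,
\]
and the identity $AP_- = P_-A$ makes the two $A$-terms cancel, leaving $Kx = Q_+P_-Bx - Q_+BP_-x$, which is manifestly a bounded operator on $\cK$ because $B$ is bounded. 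Feeding this into Proposition~\ref{prop:Seel}(b) closes the argument and proves that $Q_+P_+|_{\cD_+}$ is an automorphism of $\cD_+$.
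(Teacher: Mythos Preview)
Your proof is correct and is essentially identical to the paper's own argument: the same decomposition $T = I_\cK - S$ on $\cK = \ran Q_+$, the same Neumann series inversion, the same commutator identity $S\Lambda - \Lambda S = K$ with $K = (Q_+P_-B - Q_+BP_-)|_\cK$, and the same appeal to Proposition~\ref{prop:Seel}(b) with $h(z)=(1-z)^{-1}$ to push $T^{-1}$ back into $\cD_+$.
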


We are now in position to finally prove Theorem~\ref{thm:minimax_appendix}.
\begin{proof}[Proof of Theorem~\ref{thm:minimax_appendix}]
 Since one has $\ran(Q_+P_+|_{\cD_+})\subset\ran(Q_+P_+|_{\Dom(A)})\subset\cD_+$, it follows from Lemma~\ref{lem:minimax} that
 \[
  \ran(Q_+P_+|_{\Dom(A)}) = \cD_+\,,
 \]
 so that the hypotheses of Proposition~\ref{prop:GLS} are satisfied. Applying this proposition then proves the
 claim.
\end{proof}%



\paragraph{Acknowledgments}
The authors would like to thank Denis Borisov, Michela Egidi, and Marcel Hansmann for stimulating discussions.
I.N.\ was supported in part by Croatian Science Foundation under the projects 9345 and IP-2016-06-2468.
The second and last author were partially supported by the grant VE 253/6-1
\emph{Unique continuation principles and equidistribution properties of  eigenfunctions}
of the Deutsche Forschungsgemeinschaft.
A large part of this work was done while the third author was employed at Friedrich-Schiller-Universit\"at Jena.
The authors gratefully acknowledge support by the PPP-Project \emph{The cost of controlling the heat flow in a multiscale setting}.
M.T., M.T., and I.V. thank University of Zagreb,
the third author thanks Technische Universit\"at Dortmund,
and I.N. thanks Technische Universit\"at Chemnitz and Technische Universit\"at Dortmund for their hospitality.
\par
Moreover, the authors appreciated helpful discussions with Albrecht Seelmann and express their deepest gratitude for very detailed
comments on an earlier version of the manuscript.
%
%

%
\end{document}